\title[The isomorphism problem for systems with discrete spectrum]{On the isomorphism problem for non-ergodic 
systems with discrete spectrum}
\subjclass[2010]{Primary (37A05), (37B05).} 
\newcommand{\X}{\mathrm{X}}
\newcommand{\Y}{\mathrm{Y}}
\renewcommand{\1}{\mathbbm{1}}
\newcommand{\Kro}{\operatorname{Kro}}
\let\ab\allowbreak
\newcommand{\mrL}{\mathrm{L}}
\begin{document}

\begin{abstract}
The article presents a new perspective on the isomorphism problem for non-ergodic 
measure-preserving dynamical systems with discrete spectrum which is based on the 
connection between ergodic theory and topological dynamics constituted by topological 
models. By first solving the isomorphism problem for a certain class 
of topological dynamical systems, it is shown that the measure-preserving 
case can in fact be deduced from the topological one via the construction of 
topological models. As a byproduct, a new characterization of 
mean ergodicity for topological dynamical systems is obtained.
\end{abstract}

\maketitle

The isomorphism problem is one of the most important problems in ergodic theory, 
first formulated by von Neumann in
\cite[pp. 592--593]{Neumann1932}, his seminal work
on the Koopman operator method and dynamical systems with ``pure point 
spectrum'' (or \enquote{discrete spectrum}). 
Von Neumann, in particular, asked whether
unitary equivalence of the associated Koopman operators 
(``spectral isomorphy'') implies the existence of a point
isomorphism between two systems (``point isomorphy'').
In \cite[Satz IV.5]{Neumann1932}, he 
showed that two \emph{ergodic} measure-preserving dynamical systems
with discrete spectrum on standard probability spaces are 
point-isomorphic if and only if they are spectrally
isomorphic. These first results on the 
isomorphism problem considerably shaped the ensuing development of 
ergodic theory.
The next step in this direction was the 
Halmos-von Neumann paper \cite{Halmos1942} in which the 
authors gave a more complete solution to the 
isomorphism problem by addressing
three different aspects:
\begin{itemize}
  \item Uniqueness: For which class of dynamical systems is a given 
  isomorphism invariant $\Gamma$ \emph{complete}, meaning that two systems 
  $(\X, \phi)$ and $(\Y, \psi)$ are isomorphic if and only if $\Gamma(\X, \phi) = \Gamma(\Y, \psi)$?
  \item Representation: What are canonical representatives of isomorphy classes
  of dynamical systems?
  \item Realization: Given an isomorphism invariant $\Gamma$, what is the precise class
  of objects that can be realized as $\Gamma(\X, \phi)$ for a dynamical system $(\X, \phi)$?
\end{itemize}
In addition to the uniqueness theorem from \cite{Neumann1932},
their representation theorem showed
that for each isomorphy class
of ergodic dynamical systems with discrete spectrum, there are canonical representatives
given by ergodic 
rotations on compact groups. Moreover, their realization 
theorem established that every (countable) subgroup
of $\T$ can be realized as the point spectrum of the Koopman operator
corresponding to an ergodic system with discrete spectrum. Hence, 
there is, up to isomorphy, a one-to-one correspondence between
(countable) subgroups of $\T$ and (separable) systems with
discrete spectrum.

In the following years, further efforts towards
a solution of the isomorphism problem were made and we refer to 
\cite{Weiss1972}, \cite{Redei2012} and \cite[Chapters 2 -- 4]{Walters1975}
for a detailed account of these developments of which we only mention
the following few:
Nagel and Wolff generalized the 
Halmos-von Neumann theorem to an abstract, operator-theoretic 
statement in \cite{Nagel1972}, Mackey extended it to ergodic 
actions of separable, locally compact groups in \cite{Mackey1964}
and Zimmer took yet another approach, proving a version for 
extensions having relatively discrete spectrum in \cite{Zimmer1976}. 
However, all these results made use of ergodicity assumptions which 
was first justified by von Neumann in \cite[p. 624]{Neumann1932} by 
referring to the possibility of ergodic decomposition. Later, 
Choksi \cite{Choksi1965} showed that the situation is not as simple
as one might hope and it was only in 1981 that Kwiatkowski \cite{Kwiatkowski1981} 
solved the isomorphism problem for non-ergodic systems with discrete spectrum
by using, as proposed by von Neumann, the ergodic decomposition as well
as measure-theoretic methods. We also mention that recently, Austin 
generalized the Mackey-Zimmer theory to non-ergodic systems in \cite{Austin2010}.

It is the purpose of this article to provide an alternative approach to 
the solution of the isomorphism problem for non-ergodic systems with discrete
spectrum: The topological version of the Halmos-von Neumann
theorem has an elegant and well-known proof
using the Ellis (semi)group and Pontryagin duality for compact groups, see 
\cref{hvn} and the introduction of \Cref{sec:hvn}. Knowing this, the measure-preserving version
can be interpreted as a corollary of the topological result by constructing topological models.
In this article, it is shown that this interplay of topological dynamics 
and ergodic theory extends to the non-ergodic case. More precisely, generalizing the proof
sketched above, we first solve the isomorphism problem for non-minimal 
topological dynamical systems subject to a topological restriction and then
obtain the analogous result for non-ergodic measure-preserving systems as a consequence 
by showing that this topological restriction can always be fulfilled when working 
with topological models. On the way, we obtain an interesting characterization
of mean ergodicity for topological dynamical systems in \cref{thm:mergchar}\ref{item:merg2},
asserting that mean ergodicity, a global property, is equivalent to unique ergodicity
of certain subsystems of a topological dynamical system, a local property.

The article is organized as follows: In \Cref{sec:preliminaries}, we fix our
notation and recall basic results about operators and systems with discrete 
spectrum. In \Cref{sec:bundles}, we introduce the notion of a 
bundle of topological
dynamical systems as well as group rotation bundles.
The solution of the isomorphism problem is then broken down into the representation
theorem in \Cref{sec:representation} and the uniqueness and realization results in \Cref{sec:hvn}.
Our general philosophy for both sections, first solving the corresponding problems for 
topological systems and then obtain the same results for measure-preserving systems via 
topological models, was greatly inspired by \cite{Haase2015}.

\subsection*{Acknowledgement} The author is grateful towards Roland Derndinger, Henrik
Kreidler and Rainer Nagel for inspiring discussions and ideas and wants to thank 
Xiangdong Ye for 
bringing the paper \cite{Kwiatkowski1981} to the author's attention, as well as Kari K\"uster and Uwe
Stroinski for valuable corrections.


\section{Preliminaries}\label{sec:preliminaries}

\subsection{Notation} Our notation and terminology are that of \cite{EFHN2015} in general.
 We abbreviate a probability space $(X, \Sigma, \mu)$ by writing  
 $\mathrm{X} \defeq (X, \Sigma, \mu)$ and if $\phi\colon 
X\to X$ is measurable and measure-preserving, we call $(\mathrm{X}, \phi)$ a 
\emph{measure-preserving dynamical system}. For such a system, we define the 
\emph{Koopman operator} $T_\phi\colon\mathrm{L}^p(\mathrm{X}) \to \mathrm{L}^p(\mathrm{X})$,
$1\leq p \leq \infty$, via $T_\phi f \defeq f\circ \phi$ for $f\in\mathrm{L}^p(\X)$. With this definition,
$T_\phi$ is a bounded operator and in fact a \emph{Markov embedding},
i.e., $T_\phi\1 = \1$, $T_\phi'\1 = \1$ and 
$T_\phi|f| = |T_\phi f|$ for all 
$f\in\mathrm{L}^p(\mathrm{X})$. We say that two measure-preserving 
dynamical 
systems $(\mathrm{X}, \phi)$ and $(\mathrm{Y}, \psi)$ are \emph{point isomorphic}
if there exists an essentially invertible, measurable, measure-preserving map $\theta\colon 
X\to Y$ such that $\theta\circ\phi = \psi\circ\theta$. They 
are \emph{Markov isomorphic} if there is an invertible bi-Markov lattice homomorphism
$S\colon\mathrm{L}^1(\Y)\to\mathrm{L}^1(\X)$ such that 
$T_\phi S = ST_\psi$. 
If $S$ is merely a bi-Markov lattice homomorphism, we call $(\mathrm{Y}, \psi)$ a \emph{Markov factor} of 
$(\mathrm{X}, \phi)$.

If $K$ is a compact space (i.e., $K$ is quasi-compact and Hausdorff)
and $\phi\colon K\to K$ is continuous, we call $(K, \phi)$ a \emph{topological
dynamical system} and again define its Koopman operator $T_\phi\colon\mathrm{C}(K)\to \mathrm{C}(K)$ by
$T_\phi f \defeq f\circ \phi$ for all $f\in\mathrm{C}(K)$. We denote 
the space of regular Borel measures on $K$ by $\mathrm{M}(K)$ and 
identify it with the dual of $\mathrm{C}(K)$ via the Riesz-Markov-Kakutani 
representation theorem. We also let
$\mathrm{M}_\phi(K) \defeq \{ \mu\in \mathrm{M}(K) \mid \mu \text{ is } \phi\text{-invariant}\}$
denote the subspace of $\phi$-invariant measures and 
$\mathrm{M}_\phi^1(K)\defeq \{\mu\in\mathrm{M}_\phi(K) \mid \mu \geq 0,\, \langle \1, \mu\rangle = 1\}$
denote the subspace of $\phi$-invariant probability measures.

If $G$ is a compact topological group and $a\in G$ we 
define $\phi_a\colon G\to G$, $\phi_a(g) \defeq ag$ and 
call the dynamical system $(G, \phi_a)$ the \emph{group rotation} with $a$.
We may also abbreviate $(G, \phi_a)$ by writing $(G, a)$. Since the 
Haar measure $\mathrm{m}$ on $G$ is invariant under rotation, 
the rotation can also be considered as a measure-preserving dynamical
system $(G, \mathrm{m}; a)$.

If $T$ is a linear operator on a vector space $E$, we denote by 
\begin{align*}
  A_n[T] \defeq \frac{1}{n}\sum_{k=0}^{n-1} T^k
\end{align*}
its $n$th \emph{Ces\`aro mean} and drop $T$ from the notation if there
is no room for ambiguity. Furthermore, we call
$\fix(T) \defeq \{ x\in E \mid Tx = x\}$ the \emph{fixed space} of $T$. If $F\subset E$
is a $T$-invariant subspace, we set $\fix_F(T) \defeq \fix(T|_F)$. If $(K, \phi)$
is a topological dynamical system, the fixed space $\fix(T_\phi)$ of its Koopman 
operator is a $\mathrm{C}^*$-subalgebra of $\mathrm{C}(K)$. Similarly, if $(\X, \phi)$ is 
a measure-preserving dynamical system, $\fix_{\mathrm{L}^\infty(\X)}(T_\phi)$ is 
a $\mathrm{C}^*$-subalgebra of $\mathrm{L}^\infty(\X)$. By the Gelfand representation 
theorem (cf. \cite[Theorem I.4.4]{Takesaki1979}) there is a compact space $L$ such that 
$\fix_{\mathrm{L}^\infty(\X)}(T_\phi) \cong \mathrm{C}(L)$. The space $L$ is necessarily
extremally disconnected:
 Since $\fix(T_\phi)$ is a closed sublattice of $\mathrm{L}^1(\X)$,
 the representation theorem for AL-spaces (see \cite[Theorem II.8.5]{Schaefer1970}) 
 shows that there is a compact space $M$ and a Borel probability measure 
 $\mu_M$ on $M$ such that
 \begin{align*}
    \mathrm{C}(L) \cong \fix_{\mathrm{L}^\infty(\X)}(T_\phi)
      \cong \mathrm{L}^\infty(M, \mu_M).
 \end{align*}
 But by \cite[Theorem II.9.3]{Schaefer1970}, $\mathrm{C}(L)$ is isomorphic to a dual
 Banach lattice if and only if $L$ is hyperstonean. In particular, $L$ is extremally
 disconnected. This will be crucial for \Cref{measurablebundle}.

\subsection{Operators with discrete spectrum}

We start with a power-bounded operator $T$ on a Banach space $E$ and briefly 
recall the definition of discrete 
spectrum and the Jacobs semigroup generated by $T$ considered first by 
Konrad Jacobs in \cite[Definition III.1]{Jacobs1956}. 

\begin{definition}
  Let $E$ be a Banach space and $T\in\mathscr{L}(E)$ a power-bounded
  operator on $E$.
  \begin{enumerate}[(i)]
    \item The operator $T$ has \emph{discrete spectrum} if its 
    Kronecker space
  \begin{align*}
    \Kro(T)  \defeq  \overline{\lin}\bigcup_{|\lambda| = 1} \ker\left(\lambda\mathrm{I} - T\right).
  \end{align*}
  is all of $E$.
    \item The \emph{Jacobs semigroup} generated by $T$ is
  \begin{align*}
    \mathrm{J}(T) \defeq \overline{\{T^n \mid n\in\N\}}^\mathrm{wot},
  \end{align*}
  where the 
  closure is taken with respect to the weak operator topology and the semigroup
  operation is the composition of operators.
  \end{enumerate}
\end{definition}

The following characterization of an operator having discrete
spectrum can be found in \cite[Theorem, 16.36]{EFHN2015}.

\begin{theorem}\label{thm:operatordiscrspec}
  The following assertions are equivalent.
  \begin{enumerate}[(i)]
    \item $T$ has discrete spectrum.
    \item $\mathrm{J}(T)$ is a weakly/strongly compact group of invertible operators.
    \item The orbit $\{T^n x \mid n\geq 0\}$ is relatively compact and 
    $\inf_{n\geq 0}\|T^nx\| > 0$ for all $0\neq x\in E$.
  \end{enumerate}
\end{theorem}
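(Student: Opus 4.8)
The plan is to prove the equivalence by the cycle $(i) \Rightarrow (ii) \Rightarrow (iii) \Rightarrow (i)$, exploiting the structure of the Jacobs semigroup $\mathrm{J}(T)$ as a compact right-topological semigroup and the asymptotic behaviour of Cesàro means on the unimodular eigenspaces.

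\emph{Proof sketch.} For $(i) \Rightarrow (ii)$: assume $\Kro(T) = E$. Since $T$ is power-bounded, each unimodular eigenvector spans a line on which $T$ acts as multiplication by $\lambda$ with $|\lambda|=1$, so the orbit of such an eigenvector is bounded and, being contained in a circle, relatively compact; by density and power-boundedness the orbit $\{T^n x : n \ge 0\}$ is relatively (weakly) compact for every $x \in E$. Hence $\mathrm{J}(T)$, which sits inside the product of these orbit closures, is weakly compact by Tychonoff, and multiplication $(S, x) \mapsto Sx$ is separately continuous, so $\mathrm{J}(T)$ is a compact right-topological semigroup. It therefore contains a minimal left ideal and an idempotent $Q$; on each eigenvector $x$ with $Tx = \lambda x$ one computes $Qx = \lambda^{n_j} x \to x$ along the net defining $Q$ only if $Q$ acts as the identity, and in general one shows $Q$ is a projection onto a $T$-invariant closed subspace which, because it must fix every unimodular eigenvector (the relevant powers $\lambda^{n_j}$ cluster at every point of the closed subgroup they generate), is all of $E$; thus $Q = \mathrm{I}$, the semigroup $\mathrm{J}(T)$ is a group with identity $\mathrm{I}$, each element is invertible, and a compact right-topological group with separately continuous multiplication is in fact a topological group (Ellis), giving joint continuity and the equivalence of weak and strong topologies on it.

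For $(ii) \Rightarrow (iii)$: if $\mathrm{J}(T)$ is a compact group of invertible operators, then for each $x$ the orbit $\{T^n x\}$ lies in the compact set $\mathrm{J}(T)x$, hence is relatively compact; moreover for $x \neq 0$ the map $S \mapsto Sx$ is continuous on the compact group $\mathrm{J}(T)$ and nowhere zero (as each $S$ is invertible, $Sx \neq 0$), so $\|Sx\|$ attains a positive minimum over $S \in \mathrm{J}(T)$, and in particular $\inf_n \|T^n x\| > 0$. For $(iii) \Rightarrow (i)$: this is the step I expect to be the main obstacle, and the natural route is via the mean ergodic theorem applied to the rotation operators $\lambda^{-1} T$. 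Relative compactness of the orbit of $x$ makes $\lambda^{-1}T$ mean ergodic, so $A_n[\lambda^{-1}T]x$ converges to a projection $P_\lambda x \in \ker(\lambda \mathrm{I} - T)$. One must show $\overline{\lin} \bigcup_{|\lambda|=1} \operatorname{ran} P_\lambda = E$; equivalently, that the only $x$ annihilated by all $P_\lambda$ is $x = 0$. If $P_\lambda x = 0$ for all unimodular $\lambda$, then standard Jacobs–de Leeuw–Glicksberg theory (or a direct Fourier-analytic argument on the compact group $\mathrm{J}(T)$, integrating $S x$ against characters $S \mapsto \lambda^n$) shows $x$ lies in the "flight vector" part, on which the Cesàro means of $\|T^n x\|$ tend to $0$, forcing a subsequence $\|T^{n_k}x\| \to 0$ and contradicting $\inf_n \|T^n x\| > 0$ unless $x = 0$. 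Assembling these implications closes the cycle; the delicate point throughout is keeping the weak/strong compactness bookkeeping consistent, which is exactly where \Cref{thm:operatordiscrspec}'s phrasing "weakly/strongly compact" is doing work, and where I would lean on the Jacobs–de Leeuw–Glicksberg splitting $E = \Kro(T) \oplus E_{\mathrm{fl}}$ as the cleanest organizing tool.
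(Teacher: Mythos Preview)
The paper does not give its own proof of this theorem; it is stated as a quotation from \cite[Theorem 16.36]{EFHN2015} with no argument supplied. Your sketch is the standard Jacobs--de Leeuw--Glicksberg route and is correct in outline, so there is nothing to compare against beyond noting that the cited reference proceeds along essentially the same lines.

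One small imprecision worth flagging: in $(iii) \Rightarrow (i)$ the phrase ``the Ces\`aro means of $\|T^n x\|$ tend to $0$'' is not the correct characterization of flight vectors and does not follow directly. What you actually need (and what you reach in the next clause) is that $0$ lies in the weak closure of $\{T^n x : n \ge 0\}$; since that orbit is assumed relatively \emph{norm}-compact, weak and norm topologies agree on its closure, so $0$ is in the norm closure and you obtain a net with $\|T^{n_i} x\| \to 0$, contradicting $\inf_n \|T^n x\| > 0$. With that adjustment the cycle closes cleanly.
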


\begin{remark}\label{rem:ds}
  If $T$ has discrete spectrum, it is mean ergodic and 
  $\mathrm{J}(T)$ is a compact \emph{abelian} group on which the weak and 
  strong operator topology coincide. It is metrizable if $E$ is.
\end{remark}

\subsection{Systems with discrete spectrum} Next, we consider Koopman 
operators corresponding to dynamical systems. See \cite[Chapters 4, 7]{EFHN2015}
for general information.

\begin{definition}
  We say that a measure-preserving dynamical system 
  $(\mathrm{X}, \phi)$ has discrete spectrum if its Koopman operator 
  $T_\phi$ has discrete spectrum on $\mathrm{L}^1(\mathrm{X})$. Similarly,
  we say that a topological dynamical system $(K, \phi)$ has discrete spectrum
  if $T_\phi$ has discrete spectrum as an operator on $\mathrm{C}(K)$.
\end{definition}

\begin{example}\label{groupdiscrspec}
  If $B$ is a compact space, the trivial dynamical system 
  $(B, \id_B)$ has discrete spectrum. Also, if $G$ is a compact group and $a\in G$,
  the measure-preserving dynamical system $(G, \mathrm{m}; a)$
  has discrete spectrum
  and so does the topological dynamical system $(G, a)$.
\end{example}

If $(K, \phi)$ is a topological dynamical system
and $T_\phi\in\mathscr{L}(\mathrm{C}(K))$ has discrete spectrum, the Jacobs semigroup 
$\mathrm{J}(T_\phi)$ is related to the \emph{Ellis semigroup} $\mathrm{E}(K, \phi) \subset K^K$ defined 
as $\mathrm{E}(K, \phi) \defeq \overline{\{\phi^n \mid n\in\N\}}$, see 
\cite[Section 19.3]{EFHN2015}. The following well-known 
result establishes this connection and gives a topological characterization of the operator theoretic
notion of discrete spectrum. 

\begin{proposition}\label{prop:topdiscrspec}
  Let $(K, \phi)$ be a topological dynamical system. For the 
  Koopman operator $T_\phi$, the following assertions are equivalent.
  \begin{enumerate}[(i)]
    \item\label{item:ds1} $T_\phi$ has discrete spectrum.
    \item\label{item:ds2} $\mathrm{J}(T_\phi)$ is a group of Koopman operators.
    \item\label{item:ds3} $\mathrm{E}(K, \phi)$ is a group of equicontinuous transformations on $K$.
    \item\label{item:ds4} $(K, \phi)$ is equicontinuous and invertible.
  \end{enumerate}
  Moreover, if these conditions are fulfilled, the map
  \begin{align*}
    \Phi\colon \mathrm{J}(T_\phi) \to \mathrm{E}(K, \phi), \quad T_\theta \mapsto \theta
  \end{align*}
  is an isomorphism of compact topological groups.
\end{proposition}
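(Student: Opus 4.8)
\emph{Strategy.} The plan is to argue in three moves. First, link the operator-theoretic conditions (i) and (ii) to \cref{thm:operatordiscrspec}. Second, pass from (i) to the topological conditions (iii) and (iv) by means of the Arzel\`a--Ascoli theorem together with the Gelfand-type duality that identifies unital $\mathrm{C}^*$-homomorphisms of $\mathrm{C}(K)$ with continuous self-maps of $K$. Third, read off the isomorphism $\Phi$ from the resulting description of $\mathrm{E}(K,\phi)$ as a group of Koopman operators.

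\emph{(i) $\Leftrightarrow$ (ii).} By \cref{thm:operatordiscrspec}, (i) is equivalent to $\mathrm{J}(T_\phi)$ being a compact group of invertible operators, on which, by \cref{rem:ds}, the weak and strong operator topologies coincide; what remains is to see that in this case the elements of $\mathrm{J}(T_\phi)$ are Koopman operators. Since $\{T_\phi^n\mid n\in\N\}$ is a set of contractions which is strongly dense in $\mathrm{J}(T_\phi)$, every $S\in\mathrm{J}(T_\phi)$ is a strong limit of a net $(T_\phi^{n_\alpha})_\alpha$; as each $T_\phi^{n_\alpha}$ is unital, positive and multiplicative and multiplication on $\mathrm{C}(K)$ is jointly continuous on norm-bounded sets, the operator $S$ has the same properties. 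A unital, positive, multiplicative operator on $\mathrm{C}(K)$ is a unital $\mathrm{C}^*$-homomorphism, hence equals $T_\theta$ for a unique continuous $\theta\colon K\to K$ (compose with point evaluations, which exhaust the characters of $\mathrm{C}(K)$). Conversely, a group of Koopman operators is in particular a group of invertible operators, so (ii) $\Rightarrow$ (i); moreover $T_\phi^{-1}=T_\psi\in\mathrm{J}(T_\phi)$ and $T_\phi T_\psi=\mathrm{I}=T_\psi T_\phi$ then force $\phi\circ\psi=\psi\circ\phi=\id_K$, so $\phi$ is automatically a homeomorphism.

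\emph{(i) $\Leftrightarrow$ (iv).} If (i) holds, then by \cref{thm:operatordiscrspec} every orbit $\{T_\phi^n f\mid n\geq 0\}=\{f\circ\phi^n\mid n\geq 0\}$ is relatively norm-compact in $\mathrm{C}(K)$, so Arzel\`a--Ascoli makes $\{f\circ\phi^n\mid n\geq 0\}$ equicontinuous for every $f\in\mathrm{C}(K)$; since $\mathrm{C}(K)$ separates points and generates the uniformity of the compact space $K$, this transfers to equicontinuity of $\{\phi^n\mid n\geq 0\}$, and with the homeomorphism property of $\phi$ noted above we obtain (iv). Conversely, if (iv) holds then every $\phi^n$ is onto, so $T_\phi^n$ is an isometry and $\inf_{n\geq 0}\|T_\phi^n f\|=\|f\|>0$ for $f\neq 0$, while equicontinuity of $\{\phi^n\mid n\geq 0\}$ again yields, via Arzel\`a--Ascoli, relative norm-compactness of every orbit of $T_\phi$; now \cref{thm:operatordiscrspec} gives (i).

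\emph{(iv) $\Rightarrow$ (iii) and the isomorphism $\Phi$.} Assume (iv), so that (i) and (ii) are also available. Since $\{\phi^n\mid n\geq 0\}$ is equicontinuous, the Ellis semigroup $\mathrm{E}(K,\phi)$ is the closure in $K^K$ of an equicontinuous family; hence it is equicontinuous, consists of continuous maps, and on it pointwise and uniform convergence agree. For $\theta\in\mathrm{E}(K,\phi)$, written as $\theta=\lim_\alpha\phi^{n_\alpha}$ pointwise, the convergence is then uniform, so $T_\phi^{n_\alpha}\to T_\theta$ strongly and hence weakly, i.e.\ $T_\theta\in\mathrm{J}(T_\phi)$; conversely, if $T_\theta\in\mathrm{J}(T_\phi)$ then $T_\phi^{n_\alpha}\to T_\theta$ weakly for some net, and evaluating against point masses gives $\phi^{n_\alpha}\to\theta$ pointwise, so $\theta\in\mathrm{E}(K,\phi)$. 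Thus $\mathrm{E}(K,\phi)=\{\theta : T_\theta\in\mathrm{J}(T_\phi)\}$. As $\mathrm{J}(T_\phi)$ is a group, each $\theta\in\mathrm{E}(K,\phi)$ satisfies $T_\theta^{-1}=T_\rho$ for a continuous $\rho$, whence $\theta\circ\rho=\rho\circ\theta=\id_K$ and $T_\rho\in\mathrm{J}(T_\phi)$; so $\theta$ is a homeomorphism with inverse $\rho\in\mathrm{E}(K,\phi)$, and $\mathrm{E}(K,\phi)$ is a group of equicontinuous homeomorphisms, which is (iii). The implication (iii) $\Rightarrow$ (iv) is immediate, since $\phi$ and all $\phi^n$ then belong to a group of equicontinuous homeomorphisms. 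Finally, the identification $\mathrm{E}(K,\phi)=\{\theta : T_\theta\in\mathrm{J}(T_\phi)\}$ shows that $\Phi\colon\mathrm{J}(T_\phi)\to\mathrm{E}(K,\phi)$, $T_\theta\mapsto\theta$, is a well-defined bijection; it is a group homomorphism (both groups being abelian, the composition-reversal is harmless) and it is continuous, since $T_{\theta_\alpha}\to T_\theta$ weakly forces $\theta_\alpha\to\theta$ pointwise; being a continuous bijection between compact Hausdorff groups, $\Phi$ is an isomorphism of topological groups. The crux throughout is exactly this passage from the purely operator-theoretic output of \cref{thm:operatordiscrspec}---``$\mathrm{J}(T_\phi)$ is a compact group of invertible operators''---to the geometric statements (iii) and (iv): it forces one to show that the limit operators are Koopman operators of genuine homeomorphisms and to pin down $\mathrm{E}(K,\phi)$ precisely, all the while keeping track of four topologies (the weak and strong operator topologies on $\mathrm{J}(T_\phi)$, and pointwise and uniform convergence on $\mathrm{E}(K,\phi)$) and of the fact that under the stated hypotheses they pairwise coincide.
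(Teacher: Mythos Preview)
Your argument is correct and far more explicit than the paper's, which does not give a proof but only pointers: \cref{thm:operatordiscrspec} together with \cite[Theorem~4.13]{EFHN2015} for (i)$\Leftrightarrow$(ii), the canonical correspondence $\theta\mapsto T_\theta$ for (ii)$\Leftrightarrow$(iii), and \cite[Proposition~2.5]{Glasner2007} for (iii)$\Leftrightarrow$(iv). You take a somewhat different route: instead of the chain (i)$\Leftrightarrow$(ii)$\Leftrightarrow$(iii)$\Leftrightarrow$(iv), you establish (i)$\Leftrightarrow$(ii) and then (i)$\Leftrightarrow$(iv) directly via Arzel\`a--Ascoli, finally deriving (iii) from (iv) through the identification of $\mathrm{J}(T_\phi)$ with $\mathrm{E}(K,\phi)$ that you set up along the way. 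This buys you self-containedness modulo \cref{thm:operatordiscrspec}---the external Glasner reference is no longer needed---and it produces the isomorphism $\Phi$ as a by-product of the proof rather than as a separate assertion.

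One point deserves tightening. In your step (ii)$\Rightarrow$(i) you write ``a group of Koopman operators is in particular a group of invertible operators, so (ii)$\Rightarrow$(i)'', appealing to \cref{thm:operatordiscrspec}. But the hypothesis there is that $\mathrm{J}(T_\phi)$ be a \emph{compact} group of invertible operators, and you have only verified the ``group of invertibles'' part. Whether ``$\mathrm{J}(T_\phi)$ is a group of Koopman operators'' by itself forces compactness is delicate: for a distal but non-equicontinuous homeomorphism, $\mathrm{E}(K,\phi)$ is a group and its continuous elements again form a group, so $\mathrm{J}(T_\phi)=\{T_\theta:\theta\in\mathrm{E}(K,\phi)\cap\mathrm{C}(K,K)\}$ is an algebraic group of Koopman operators without being compact. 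The paper's one-line sketch leaves this to the cited references as well, so the intended reading of (ii) is presumably that $\mathrm{J}(T_\phi)$ is a compact group consisting of Koopman operators; it would be cleanest to say so explicitly, after which your argument goes through unchanged.
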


The equivalence of \ref{item:ds1} and \ref{item:ds2} follows from \cref{thm:operatordiscrspec}
and \cite[Theorem 4.13]{EFHN2015}. The equivalence of \ref{item:ds2} and \ref{item:ds3}
follows via the canonical isomorphism $\theta \mapsto T_\theta$ and for the equivalence of 
\ref{item:ds3} and \ref{item:ds4} see \cite[Proposition 2.5]{Glasner2007}.

\section{Bundles of dynamical systems}\label{sec:bundles}
Bundles, e.g. in 
differential geometry or algebraic topology, allow to decompose an object
into smaller objects such that the small parts fit together in a 
structured way. This perspective is of interest in our context when dealing with 
dynamical systems which are not ``irreducible'', i.e., not minimal or ergodic.
We therefore introduce the notion of a bundle of topological dynamical 
systems.

\begin{definition}
  A triple $(K, B, p)$ is called a \emph{bundle} if 
  $K$ and $B$ are topological spaces, $B$ is compact and $p\colon K\to B$ is a 
  continuous surjection. The subsets
  $K_b \defeq p^{-1}(b)$ are called the \emph{fibers} of the bundle and 
  if $f\colon K\to S$ is a function into a set $S$, we denote by 
  $f_b$ its restriction to $K_b$.
  A bundle $(K, B, p)$ is called \emph{compact} if $K$ is 
  compact.
  A tuple $(K, B, p; \phi)$ is called a (\emph{compact}) \emph{bundle of
  topological dynamical 
  systems} if $(K, B, p)$ is a compact bundle and $(K, \phi)$ is a topological
  dynamical system such that each fiber $K_b$ is $\phi$-invariant. We denote by 
  $\phi_b$ the restriction of $\phi$ to $K_b$.
\end{definition}

\begin{remark}
  For a dynamical system $(K, \phi)$ and a compact space $B$, a tuple
  $(K, B, p; \phi)$ is a bundle of dynamical systems if and only if $p$ is a factor map
  from $(K, \phi)$ to $(B, \id_B)$. 
\end{remark}

\begin{example}
\begin{enumerate}[(1)]
  \item Let $(K, \phi)$ be a topological dynamical system, $B$ a singleton and $p\colon K\to B$ the 
  unique map from $K$ to $B$. Then $(K, B, p; \phi)$ is a compact bundle of 
  dynamical systems. If $(K, \phi)$ is minimal, this 
  is the only possible choice of $B$. However, the 
  converse is not true as the system $([0,1], x\mapsto x^2)$ demonstrates.

  \item Let $B = [0,1]$, $K = \T\times B$, $\alpha\colon B\to\T$ be continuous and 
  $\phi_\alpha\colon K\to K$,
  $(z, t)\mapsto (\alpha(t)z, t)$ be the associated rotation on the cylinder $K$. Then 
  $(K, B, p_B; \phi_\alpha)$ is a compact bundle of topological dynamical systems. If 
  $\alpha \equiv a$ for some $a\in\T$, the system $(K, \phi)$ is just the product of the torus 
  rotation $(\T, \phi_a)$ and the trivial system $(B, \id_B)$.

  \item More generally, let $(M, \psi)$ be a topological dynamical system and $B$ be a 
  compact space.
  Then the product system $(M\times B, \psi\times\id_B)$ can be viewed as the 
  bundle
  $(M\times B, B, p_B; \psi\times\id_B)$. Bundles of this form are called 
  \emph{trivial}. 
\end{enumerate}
\end{example}

\begin{definition}
  A \emph{bundle morphism} of bundles $(K_1, B_1, p_1)$ and 
  $(K_2, B_2, p_2)$ is a pair $(\Theta, \theta)$ consisting of continuous 
  functions $\Theta\colon K_1 \to K_2$
  and $\theta\colon B_1\to B_2$ such that the following
  diagram commutes:
  \begin{align*}
    \xymatrix{
      K_1 \ar[d]^{p_1} \ar[r]^{\Theta} & K_2 \ar[d]^{p_2} \\
      B_1 \ar[r]^{\theta} & B_2
    }
  \end{align*}
  A \emph{morphism of compact bundles of topological dynamical systems} 
  $(K_1, B_1,\allowbreak p_1; \phi_1)$ and 
  $(K_2, B_2, p_2;\phi_2)$ is a morphism $(\Theta, \theta)$ of the 
  corresponding bundles such that $\Theta$ is, in addition, a morphism of
  topological dynamical systems.
  If $\Theta$ and $\theta$ are homeomorphisms, we call $(\Theta, \theta)$ an 
  \emph{isomorphism}.
\end{definition}

\subsection{Sections} An important tool for capturing structure in bundles relative to
the base space are sections. We recall the following definition.

\begin{definition}
  Let $(K, B, p)$ be a bundle. A function $s\colon B\to K$ is called 
  a \emph{section} of $(K, B, p)$ if 
  $s(b) \in K_b$ for each $b\in B$.
\end{definition}

Although the existence of
sections is guaranteed by the axiom of choice, there may not exist
 \emph{continuous} sections in general.

\begin{example}\label{nosecexample}
  Take $K = B = \T$ and define $p\colon K\to B$ by $p(z) = z^2$. Then this 
  bundle has no continuous section because sections are injective and an 
  injective, continuous function $s\colon\T\to\T$ is necessarily surjective
  and hence a homeomorphism.
\end{example}

Under additional topological conditions, there are 
positive results. The first result due 
to Ernest Michael involves zero-dimensional spaces. Since 
every totally disconnected compact Hausdorff space is zero-dimensional (see 
\cite[Proposition 3.1.7]{ArhTka2008}) we
only state the following special case. It can be found in \cite[Corollary 1.4]{Michael1956}.

\begin{proposition}\label{michael}
  Let $(K, B, p)$ be a compact bundle such that $K$ is metric, 
  $B$ is totally disconnected and $p$ is open. Then there exists a continuous
  section.
\end{proposition}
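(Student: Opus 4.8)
The plan is to construct the section directly, as a uniform limit of locally constant ``approximate sections'', exploiting the openness of $p$ together with the fact (cited above) that the totally disconnected compact Hausdorff space $B$ has a base of clopen sets. I would fix a compatible metric $d$ on the compact metric space $K$ and note at the outset that each fiber $K_b = p^{-1}(b)$ is closed (since $p$ is continuous), hence compact.

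The engine of the proof is a single approximation step, which I would isolate as follows. Suppose we are given a finite clopen partition $B = W_1 \sqcup \dots \sqcup W_r$ and points $c_j \in K$ with $d(c_j, K_b) < \varepsilon$ for every $b \in W_j$; then, given any $\varepsilon' > 0$, one can find a finite clopen partition $B = W_1' \sqcup \dots \sqcup W_s'$ \emph{refining} the previous one, and points $c_i' \in K$, such that $d(c_i', K_b) < \varepsilon'$ for all $b \in W_i'$ and $d(c_i', c_j) < \varepsilon$ whenever $W_i' \subseteq W_j$. To see this, fix $b \in B$, say $b \in W_j$, pick $y_b \in K_b$ with $d(c_j, y_b) < \varepsilon$, and use that $p$ is open: $p(B(y_b, \varepsilon'))$ is an open neighbourhood of $b = p(y_b)$, so (intersecting with $W_j$ and using the clopen base of $B$) there is a clopen neighbourhood $V_b$ of $b$ with $V_b \subseteq W_j \cap p(B(y_b, \varepsilon'))$; thus $K_{b'} \cap B(y_b, \varepsilon') \neq \emptyset$, i.e. $d(y_b, K_{b'}) < \varepsilon'$, for every $b' \in V_b$. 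Compactness of $B$ gives a finite subcover $V_{b_1}, \dots, V_{b_s}$, which we disjointify by setting $W_i' = V_{b_i} \setminus (V_{b_1} \cup \dots \cup V_{b_{i-1}})$ and put $c_i' = y_{b_i}$; all the asserted properties are then immediate, and the $W_i'$ are clopen as finite Boolean combinations of clopen sets.

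With this step in hand I would run the following induction. For the first level: for each $b$ choose $x_b \in K_b$ and a clopen $V_b \ni b$ inside $p(B(x_b, 1/2))$, pass to a finite subcover, disjointify, and let $f_1 \colon B \to K$ be the resulting locally constant map, so $d(f_1(b), K_b) < 2^{-1}$ for all $b$. Applying the approximation step with $\varepsilon = 2^{-n}$ and $\varepsilon' = 2^{-n-1}$ produces, for each $n \geq 1$, a locally constant $f_{n+1} \colon B \to K$ with $d(f_{n+1}(b), K_b) < 2^{-n-1}$ and $d(f_{n+1}(b), f_n(b)) < 2^{-n}$ for all $b$. Since $\sum_n 2^{-n} < \infty$, the sequence $(f_n)$ is uniformly Cauchy and converges uniformly to a continuous map $f \colon B \to K$; and $d(f(b), K_b) = \lim_n d(f_n(b), K_b) = 0$, together with $K_b$ being closed, forces $f(b) \in K_b$ for every $b$. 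Thus $f$ is a continuous section.

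The main obstacle, and the only place where all the hypotheses enter, is the approximation step: one must choose the refining cover so as to control \emph{simultaneously} the new distance to the fibers — which requires $p$ open, in order to turn small balls in $K$ into honest neighbourhoods of the base point in $B$ — and the size of the jump from the previous level, so that the telescoping estimate is summable; zero-dimensionality of $B$ is exactly what lets these neighbourhoods be taken clopen and disjointified into a partition refining the old one. (Metrizability of $K$ is used throughout to make sense of the approximations; $B$ need only be compact and zero-dimensional.) I would also remark that this proposition is precisely the special case of Michael's zero-dimensional selection theorem applied to the carrier $b \mapsto K_b$, whose lower semicontinuity is just a reformulation of the openness of $p$, so one could alternatively deduce it from that general result.
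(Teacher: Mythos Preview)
Your proof is correct and self-contained. The paper, however, gives no proof at all: it simply cites the result as \cite[Corollary 1.4]{Michael1956}, i.e., as a special case of Michael's zero-dimensional selection theorem. So your approach is genuinely different in the sense that you supply a direct argument where the paper outsources the work; your final remark already identifies the connection, and indeed the carrier $b \mapsto K_b$ is lower-semicontinuous precisely because $p$ is open, which is exactly the hypothesis Michael's theorem needs. What your hands-on construction buys is that the reader sees concretely how openness of $p$ (to turn balls in $K$ into neighbourhoods in $B$), zero-dimensionality of $B$ (to refine into clopen partitions), and metrizability of $K$ (to run the uniform Cauchy argument) each enter; the citation buys brevity and places the result in its natural general context.
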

For not necessarily metric $K$, \cite[Theorem 2.5]{Gleason1958} 
yields the following.

\begin{theorem}\label{gleason}
  Let $(K, B, p)$ be a compact bundle such that $B$ is extremally disconnected.
  Then there exists a continuous section.
\end{theorem}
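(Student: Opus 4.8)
The plan is to reduce the problem to a Zorn's-lemma argument on partial sections and to use the extremal disconnectedness of $B$ to perform the crucial gluing step. First I would consider the set $\mathcal{S}$ of pairs $(A, s)$ where $A \subseteq B$ is closed, $s\colon A \to K$ is a continuous section of the restricted bundle $(p^{-1}(A), A, p)$, partially ordered by extension: $(A_1, s_1) \leq (A_2, s_2)$ iff $A_1 \subseteq A_2$ and $s_2|_{A_1} = s_1$. The set $\mathcal{S}$ is nonempty since for any single point $b \in B$ the axiom of choice gives a point in $K_b$, and any chain $\{(A_\alpha, s_\alpha)\}$ has an upper bound: take $A = \overline{\bigcup_\alpha A_\alpha}$ and extend the common function $s$ on $\bigcup_\alpha A_\alpha$ to its closure by continuity, using compactness of $K$ to see that the graph of $s$, being contained in the compact set $p^{-1}(A)$, has a well-defined continuous extension to $\overline{\bigcup_\alpha A_\alpha}$ (here one checks the graph closure is still a graph, i.e.\ single-valued, which follows because $s$ is continuous on a dense subset and $K$ is Hausdorff). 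By Zorn's lemma there is a maximal element $(A_0, s_0)$.

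The heart of the argument is showing $A_0 = B$. Suppose not, and pick $b_0 \in B \setminus A_0$. The idea is to enlarge $A_0$ by a small closed neighborhood of $b_0$ (or at least by one more point) while keeping continuity, contradicting maximality. Since $B$ is extremally disconnected, the closure of every open set is open, so $B$ is totally disconnected and has a basis of clopen sets; in particular there is a clopen set $U$ with $b_0 \in U$ and $U \cap A_0 = \emptyset$ (using regularity of the compact Hausdorff space $B$ to separate $b_0$ from the closed set $A_0$). Now it suffices to produce a continuous section over $U$ alone, since $U$ and $A_0$ are disjoint clopen pieces and a continuous section over $A_0 \cup U$ can be assembled from the two pieces separately. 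Thus the whole theorem reduces to: \emph{over a nonempty extremally disconnected compact Hausdorff space, a continuous section exists}. Here one runs the same Zorn argument but now, at a maximal $(A_0, s_0)$ with $A_0 \subsetneq B$, one picks $b_0$ and must extend $s_0$ to $A_0 \cup \{b_0\}$: choose any $k_0 \in K_{b_0}$, define $s_0$ there, and the extended function is continuous provided $b_0$ is not in the closure of $A_0$ — but $b_0$ could well be a limit of $A_0$, so this naive step fails, and that is exactly where a genuinely topological input beyond choice is needed.

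The resolution, and the step I expect to be the main obstacle, is the following: one should not extend point-by-point but instead use the projective/injective nature of extremally disconnected spaces directly. The cleanest route is to invoke the Gleason projective-cover characterization: extremally disconnected compact Hausdorff spaces are precisely the projective objects in the category of compact Hausdorff spaces, meaning that for any surjection $q\colon L \to M$ of compact Hausdorff spaces and any map $f\colon B \to M$, there is a lift $\tilde f\colon B \to L$ with $q \circ \tilde f = f$. Applying this with $q = p\colon K \to B$, $M = B$, and $f = \mathrm{id}_B$ immediately yields a continuous section $s = \tilde f\colon B \to K$. So the real plan is: (1) state the projectivity property of extremally disconnected compact spaces (citing the same Gleason paper \cite{Gleason1958}, where this is the main theorem), and (2) apply it to $\mathrm{id}_B$ and $p$. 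The Zorn's-lemma scaffolding above is then either unnecessary or is the content of the proof of projectivity itself; the honest statement is that the theorem \emph{is} Gleason's projectivity theorem specialized to a factor map, and the only work is recognizing $p\colon K \to B$ as a continuous surjection of compact Hausdorff spaces so that the hypotheses of that theorem apply — which they do, since $K$ is compact Hausdorff and $p$ is a continuous surjection by the definition of a compact bundle.
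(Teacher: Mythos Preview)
The paper gives no proof of this statement at all; it is simply quoted as \cite[Theorem 2.5]{Gleason1958}. Your final conclusion --- that the result is nothing other than Gleason's projectivity theorem for extremally disconnected compact Hausdorff spaces, applied with $q=p$, $M=B$, and $f=\mathrm{id}_B$ --- is exactly this, so your bottom line agrees with the paper.

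That said, a remark on the scaffolding you discard: the Zorn argument has a gap already at the chain step, not only at the maximality step you flagged. Given a chain $\{(A_\alpha,s_\alpha)\}$, the closure of the union of the graphs need not be the graph of a function; continuity on a dense subset of $\overline{\bigcup_\alpha A_\alpha}$ does not force a unique continuous extension, because $K$ being Hausdorff only guarantees uniqueness \emph{if} an extension exists, not that limits along different approaching nets coincide. (A standard fix is to work instead with minimal closed subsets of $K$ mapping onto $B$, i.e.\ irreducible restrictions of $p$, and then use extremal disconnectedness to show such a minimal set meets each fiber in a single point --- which is essentially how Gleason's projectivity theorem is proved.) Since you correctly abandon the Zorn route and land on the direct appeal to projectivity, this does not affect your conclusion.
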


In case that there is no continuous section, the so-called pullback construction 
allows to construct bigger bundles 
admitting sections.
We repeat this standard construction in our setting.

\begin{construction}[Pullback bundle]
  Let $(K, B, p; \phi)$ be a bundle of topological dynamical systems, $M$ a 
  compact space and 
  $r\colon M\to B$ a continuous surjection. We then define
  \begin{align*}
    r^*K \defeq \left\{ (m, k) \in M\times K \mid r(m) = p(k) \right\}
  \end{align*}
  and denote the restriction of the canonical projection $p_{M}\colon M\times K \to M$
  to $r^*K$ by $\pi_M$ and the restriction of $\id_{M}\times\,\phi$ to $r^*K$ by $r^*\phi$. 
  Then $(r^*K$, $M$, $\pi_M;$ $r^*\phi)$ is a bundle of topological dynamical systems and 
  $(K, \phi)$ is a factor of $(r^*K, r^*\phi)$
  with respect to the projection $\pi_K$ onto the second component. We 
  obtain the following commutative diagram of dynamical systems:
  \begin{align*}
    \xymatrix{
      (r^*K, r^*\phi) \ar[r]^{\pi_K} \ar[d]^{\pi_M} & (K, \phi) \ar[d]^p \\
      (M, \id_M) \ar[r]^r & (B, \id_B)
    }
  \end{align*}
\end{construction}

\begin{example}\label{bundleexample}
  Let $K = B = \T$, define $\phi\colon K\to K$ by 
  setting $\phi(z) = -z$ and $p\colon K\to B$ by setting $p(z) = z^2$. 
  Set $r\colon [0,1]\to \T$, $t\mapsto \e^{2\pi\i t}$. 
  Then the pullback bundle with respect to $r$ is isomorphic to
  the bundle $([0,1]\times\{-1, 1\}, [0,1], p_{[0,1]}; (t, x) \mapsto (t, -x))$. 
\end{example}

\begin{remark}\label{pullbacksection}
  Given a bundle $(K, B, p; \phi)$, the 
  pullback bundle $(p^*K$, $K$, $\pi$; $p^*\phi)$ admits a continuous section: This pullback bundle is
  constructed by gluing to each point in $K$ its fiber and so the map $s\colon K\to p^*K$,
  $k\mapsto (k, k)$ is a canonical continuous section. In particular, every bundle
  of topological dynamical systems is a factor of a bundle admitting a continuous section.
\end{remark}

\begin{remark}\label{pullbackpreserves}
  Properties like minimality and unique ergodicity of each fiber as well as global properties
  such as
  equicontinuity, invertibility and mean 
  ergodicity are preserved under forming pullback bundles. In particular, discrete 
  spectrum is preserved by this construction.
\end{remark}

\subsection{Operator-theoretic aspects of bundles}
The following 
proposition shows that, up to isomorphism, there is a one-to-one 
correspondence between unital $\mathrm{C}^*$-subalgebras
of $\fix(T_\phi)$ and trivial factors $(B, \id_B)$ of the system $(K, \phi)$.

\begin{proposition}\label{prop:factorbundle}
  Let $(K, \phi)$ be a topological dynamical system and $\mathcal{A}$ a unital
  $\mathrm{C}^*$-subalgebra $\mathcal{A}$ of $\fix(T_\phi)$. Then there is an
  (up to isomorphism) unique bundle $(K, B, p;\phi)$ such that 
  $T_p(\mathrm{C}(B)) = \mathcal{A}$ where $T_p$ denotes the Koopman operator
  of $p$.
\end{proposition}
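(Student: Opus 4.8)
The plan is to realize $B$ as the Gelfand spectrum of $\mathcal A$ and let $p$ be induced by the inclusion $\mathcal A \hookrightarrow \mathrm C(K)$. Concretely, since $\mathcal A$ is a unital commutative $\mathrm C^*$-algebra, the Gelfand representation gives a compact Hausdorff space $B \defeq \sigma(\mathcal A)$ together with an isometric $*$-isomorphism $\Gamma\colon \mathcal A \to \mathrm C(B)$. The inclusion $\iota\colon \mathcal A \to \mathrm C(K)$ is a unital $*$-homomorphism between commutative $\mathrm C^*$-algebras, so it is dual to a continuous map $p\colon K \to B$, characterized by $\iota(a) = \Gamma(a)\circ p$ for all $a\in\mathcal A$; in operator notation this reads $T_p(\Gamma(a)) = a$, i.e.\ $T_p(\mathrm C(B)) = \mathcal A$. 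Since $\mathcal A$ is unital (it separates no points of $B$ only if $\mathcal A = \mathbb C\1$, but in any case $\mathrm C(B) \cong \mathcal A$ separates the points of $\sigma(\mathcal A)$ by definition), $p$ is surjective: the image $p(K)$ is compact, and if it were a proper closed subset, Urysohn would give a nonzero function in $\mathrm C(B)$ vanishing on $p(K)$, contradicting injectivity of $T_p$ on $\mathrm C(B)$. Thus $(K,B,p)$ is a bundle.

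Next I verify that $(K,B,p;\phi)$ is a bundle of topological dynamical systems, i.e.\ that each fiber $K_b = p^{-1}(b)$ is $\phi$-invariant; by the remark following the definition of a bundle, this is equivalent to $p$ being a factor map from $(K,\phi)$ to $(B,\id_B)$, which in turn is equivalent to $T_p \circ T_{\id_B} = T_\phi \circ T_p$ on $\mathrm C(B)$, i.e.\ $T_p = T_\phi \circ T_p$. But $T_p(\mathrm C(B)) = \mathcal A \subset \fix(T_\phi)$, so $T_\phi T_p g = T_p g$ for every $g\in\mathrm C(B)$, which is exactly what is needed. Hence each fiber is $\phi$-invariant and $(K,B,p;\phi)$ is a bundle of topological dynamical systems with the required property $T_p(\mathrm C(B)) = \mathcal A$.

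For uniqueness, suppose $(K,B',p';\phi)$ is another bundle with $T_{p'}(\mathrm C(B')) = \mathcal A$. I want a homeomorphism $\theta\colon B\to B'$ with $\theta\circ p = p'$ (automatically a bundle isomorphism $(\id_K,\theta)$, and a morphism of bundles of dynamical systems since $\id_K$ commutes with $\phi$). The composite $T_{p'}\colon \mathrm C(B') \to \mathcal A$ followed by the inverse of $T_p\colon \mathrm C(B)\to\mathcal A$ is a unital $*$-isomorphism $\mathrm C(B')\to\mathrm C(B)$, hence dual to a homeomorphism $\theta\colon B\to B'$; unraveling the definitions, $T_p(g\circ\theta) = T_{p'}(g)$ for all $g\in\mathrm C(B')$, i.e.\ $(g\circ\theta)\circ p = g\circ p'$, i.e.\ $g\circ(\theta\circ p) = g\circ p'$ for all $g$, which forces $\theta\circ p = p'$ since functions on $B'$ separate points. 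This gives the claimed isomorphism of bundles of dynamical systems.

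I do not anticipate a serious obstacle here — the whole statement is essentially Gelfand duality for commutative $\mathrm C^*$-algebras applied to the inclusion $\mathcal A\hookrightarrow\mathrm C(K)$, together with the translation between \enquote{$\mathcal A\subset\fix(T_\phi)$} and \enquote{$p$ is a factor map onto a trivial system.} The only point requiring a little care is making sure every step is stated at the level of the appropriate category (Koopman operators / $*$-homomorphisms on one side, continuous maps on the other) so that \enquote{unique up to isomorphism} is unambiguous; this is routine once the dictionary $\mathcal A = T_p(\mathrm C(B))$, $p$ surjective $\Leftrightarrow$ $T_p$ injective, $\phi$-invariant fibers $\Leftrightarrow$ $\mathcal A\subset\fix(T_\phi)$ is in place.
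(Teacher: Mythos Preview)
Your proof is correct and follows essentially the same route as the paper: construct $B$ as the Gelfand spectrum of $\mathcal A$, obtain $p$ from the inclusion $\mathcal A\hookrightarrow\mathrm C(K)$ via Gelfand duality, check fiber invariance by translating it into the operator identity $T_\phi T_p = T_p$, and deduce uniqueness from the fact that any $*$-isomorphism $\mathrm C(B')\to\mathrm C(B)$ is a Koopman operator of a homeomorphism. Your write-up is in places more explicit than the paper's (the Urysohn argument for surjectivity of $p$, the verification that $\theta\circ p = p'$), but there is no substantive difference in strategy.
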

\begin{proof}
  Let $\mathcal{A}$ be a unital $\mathrm{C}^*$-subalgebra of $\fix(T_\phi)$. By
  the Gelfand-Naimark theorem, there is a compact space $B$ such that 
  $\mathcal{A}\cong\mathrm{C}(B)$. The induced 
  $\mathrm{C}^*$-embedding $\mathrm{C}(B)\hookrightarrow\mathrm{C}(K)$ is
  given by a Koopman operator $T_p$ for a continuous map $p\colon K\to B$.
  Because $T_p$ is injective, $p$ is surjective. Moreover, from one obtains
  from the commutativity of the two diagrams
  \begin{align*}
    \xymatrix{
      \mathrm{C}(K) \ar[r]^-{T_\phi} & \mathrm{C}(K) & & 
      K \ar[d]^-p \ar[r]^-\phi   & K \ar[d]^-p  \\
      \mathrm{C}(B) \ar[r]^-{T_{\id_B}} \ar[u]^-{T_p} & \mathrm{C}(B) \ar[u]^-{T_p} & & 
      B \ar[r]^-{\id_B}  & B
    }
  \end{align*}
  that $\phi(K_b) \subset K_b$, so 
  $(K, B, p;\phi)$ is indeed a bundle of topological dynamical systems and 
  $T_p(\mathrm{C}(B)) = \mathcal{A}$ by construction.
  
  Now take two such bundles $(K, B, p; \phi)$ and $(K, B', p';\phi)$ of
  dynamical systems.
  Then $\mathrm{C}(B)\cong\mathcal{A}\cong\mathrm{C}(B')$ and this isomorphism is again 
  given by a Koopman operator
  $T_\theta\colon \mathrm{C}(B)\to\mathrm{C}(B')$ corresponding to a homeomorphism
  $\theta\colon B'\to B$. This yields that $(\id, \theta)$
  is an isomorphism between the two bundles.
\end{proof}

\begin{remark}\label{factororder}
  \cref{prop:factorbundle} allows to order the bundles corresponding to
  a system $(K, \phi)$ by saying that 
  $(K, B_1, p_1; \phi)$ is finer than $(K, B_2, p_2;\phi)$ if $T_{p_1}(\mathrm{C}(B_1))
  \supset T_{p_2}(\mathrm{C}(B_2))$. The term finer is used here because the
  above inclusion induces a surjective map $r\colon B_1\to B_2$. In light of
  \cref{prop:factorbundle}, there is a \emph{maximal trivial factor} of $(K, \phi)$
  associated to the fixed space $\fix(T_\phi)$. We denote this factor
  by $(L_\phi, \id_{L_\phi})$ and the corresponding factor map $q_\phi\colon K\to L_\phi$, 
  but omit $\phi$ from the notation if the context leaves no room for ambiguity.
\end{remark}

After these preparations,
we characterize mean ergodicity via bundles, showing that 
the global notion of mean ergodicity is in fact equivalent to the \enquote{local} notion
of fiberwise unique ergodicity. The elegant proof for implication \ref{item:merg2} $\Rightarrow$ \ref{item:merg1}
presented here was kindly provided by M. Haase.

\begin{theorem}\label{thm:mergchar}
  Let $(K, \phi)$ be a topological dynamical system and $q\colon K\to L$
  the projection onto its maximal trivial factor. Then the 
  following assertions are equivalent.
  \begin{enumerate}[(a)]
    \item\label{item:merg1} The Koopman operator $T_\phi$ is mean ergodic on $\mathrm{C}(K)$.
    \item\label{item:merg2} Each fiber $(K_l, \phi_l)$ is uniquely ergodic.
    \item\label{item:merg3} For each $l\in L$ and $f\in\mathrm{C}(K)$ there is a $c_l\in\C$ such that 
    $A_nf(x) \to c_l$ for all $x\in K_l$.
    \item\label{item:merg4} The map $\mathrm{M}_\phi(K)\to\mathrm{M}(L)$, $\mu \mapsto T_q'\mu = q_*\mu$
    is an isomorphism.
  \end{enumerate}
\end{theorem}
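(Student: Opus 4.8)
The plan is to prove the cycle of implications \ref{item:merg2} $\Rightarrow$ \ref{item:merg3} $\Rightarrow$ \ref{item:merg1} $\Rightarrow$ \ref{item:merg2} and then treat \ref{item:merg4} separately, showing \ref{item:merg2} $\Leftrightarrow$ \ref{item:merg4} by hand. Recall that $q\colon K\to L$ is the factor map onto the maximal trivial factor, so that $T_q(\mathrm C(L))=\fix(T_\phi)$; this identification is the key dictionary translating between the global operator $T_\phi$ and the fiber decomposition $K=\bigsqcup_{l\in L}K_l$.

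For \ref{item:merg2} $\Rightarrow$ \ref{item:merg3}: fix $f\in\mathrm C(K)$ and $l\in L$. Unique ergodicity of $(K_l,\phi_l)$ means (by the standard characterization of unique ergodicity, e.g.\ \cite[Theorem 10.6 / Proposition 10.?]{EFHN2015}) that the Ces\`aro means $A_n[T_{\phi_l}]f_l$ converge uniformly on $K_l$ to the constant $c_l\defeq\int_{K_l}f_l\,\mathrm d\nu_l$, where $\nu_l$ is the unique invariant probability measure on $K_l$; in particular the pointwise limit $A_nf(x)\to c_l$ holds for every $x\in K_l$. That is exactly \ref{item:merg3}.

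For \ref{item:merg3} $\Rightarrow$ \ref{item:merg1}: I must show $A_n[T_\phi]f$ converges in $\mathrm C(K)$ for every $f$. Since $T_\phi$ is power-bounded (a Markov operator, hence a contraction), mean ergodicity is equivalent to the orbit $\{A_nf\}$ being relatively compact together with a separation-of-fixed-space/adjoint-fixed-space condition; more usefully, I would argue via the function $g\colon L\to\C$, $l\mapsto c_l$, and show first that $g$ is continuous, so that $h\defeq T_q g=g\circ q\in\fix(T_\phi)\subset\mathrm C(K)$, and then that $A_nf\to h$ uniformly. Continuity of $g$ and uniformity of the convergence are the two points requiring care: I would obtain both from a Dini-type / vague-compactness argument. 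Concretely, $\mathrm M^1_\phi(K)$ is weak$^*$-compact, and one shows that for each $l$ the accumulation points of the empirical measures $\frac1n\sum_{k=0}^{n-1}\delta_{\phi^k x}$ (for $x\in K_l$) all lie in $\mathrm M^1_\phi(K_l)$; the hypothesis \ref{item:merg3} forces $\langle f,\mu\rangle=c_l$ for every such accumulation point $\mu$ and every $f$, which (letting $f$ range) pins down a unique invariant probability measure on each fiber, i.e.\ we are back to \ref{item:merg2} and a genuine disintegration $\mu=\int_L\nu_l\,\mathrm d(q_*\mu)(l)$. Continuity of $l\mapsto c_l=\langle f,\nu_l\rangle$ then follows because $\nu_l$ depends weak$^*$-continuously on $l$ (a compactness-plus-uniqueness argument: any net $l_\alpha\to l$ has $\nu_{l_\alpha}$ with a convergent subnet whose limit is fiber-supported and invariant, hence equals $\nu_l$), and uniform convergence $A_nf\to g\circ q$ follows from a Dini argument applied fiberwise and glued via the compactness of $L$. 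I expect \textbf{this implication — upgrading the pointwise, fiberwise data of \ref{item:merg3} to norm convergence in $\mathrm C(K)$, which hinges on continuity of $l\mapsto\nu_l$ and a uniform Dini step — to be the main obstacle.}

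For \ref{item:merg1} $\Rightarrow$ \ref{item:merg2}: if $T_\phi$ is mean ergodic then $A_nf\to Pf$ with $P$ the projection onto $\fix(T_\phi)=T_q\mathrm C(L)$; restricting to a fiber $K_l$ and evaluating, $A_n[T_{\phi_l}]f_l$ converges uniformly on $K_l$ to a constant (the value of $Pf$ on $K_l$, since $Pf=g\circ q$ is constant on fibers), and uniform convergence of Ces\`aro means of every $f_l\in\mathrm C(K_l)$ to a constant is precisely unique ergodicity of $(K_l,\phi_l)$ — here I need that $\mathrm C(K_l)=\mathrm C(K)|_{K_l}$ by Tietze, so every continuous function on the fiber arises as a restriction. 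Finally \ref{item:merg4}: the map $\mathrm M_\phi(K)\to\mathrm M(L)$, $\mu\mapsto q_*\mu$, is always a well-defined positive linear map preserving the constraint $\langle\1,\cdot\rangle$; it is injective iff each invariant $\mu$ is determined by its image on $L$, i.e.\ by its fiber-disintegration data, which by the argument above holds iff each fiber carries at most one invariant probability measure; and surjectivity onto all of $\mathrm M(L)$ is equivalent to each fiber carrying at least one, which holds automatically since $K_l$ is compact and $\phi_l$-invariant (Krylov–Bogolyubov), while the general case follows by integrating a measurable selection $l\mapsto\nu_l$ of fiber measures against an arbitrary $\lambda\in\mathrm M(L)$. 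Thus \ref{item:merg4} holds iff each $(K_l,\phi_l)$ is uniquely ergodic, i.e.\ \ref{item:merg4} $\Leftrightarrow$ \ref{item:merg2}, closing the argument.
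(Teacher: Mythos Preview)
Your overall strategy matches the paper's almost exactly: both establish \ref{item:merg1} $\Rightarrow$ \ref{item:merg2} by restricting the mean ergodic projection to a fiber (using Tietze to reach all of $\mathrm C(K_l)$), both obtain \ref{item:merg2} $\Rightarrow$ \ref{item:merg1} by first proving weak$^*$-continuity of $l\mapsto\nu_l$, and both handle \ref{item:merg4} via the explicit inverse $\lambda\mapsto\int_L\nu_l\,\mathrm d\lambda$. Your compactness-plus-uniqueness subnet argument for the continuity of $l\mapsto\nu_l$ is precisely the content of the closed graph theorem for maps into compact Hausdorff spaces, which is what the paper invokes (together with a small lemma ensuring that the unique invariant measure on $K_l$, viewed in $\mathrm M^1_\phi(K)$, is characterized by $q_*\nu_l=\delta_l$, so that the graph is closed).

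The one genuine gap is your ``Dini step.'' Dini's theorem needs monotonicity, which you do not have, and there is no direct compactness-of-$L$ gluing that upgrades fiberwise uniform convergence to global uniform convergence. The paper does not attempt this either. Instead, once $l\mapsto\nu_l$ is weak$^*$-continuous, the pointwise limit $Pf(x)\defeq\langle f,\nu_{q(x)}\rangle$ is a \emph{continuous} function on $K$; since the sequence $(A_nf)$ is bounded in $\mathrm C(K)$ and converges pointwise to $Pf\in\mathrm C(K)$, it converges weakly in $\mathrm C(K)$ (dominated convergence against every $\mu\in\mathrm M(K)$). Thus every $f$ has a weak cluster point for its Ces\`aro means, and the abstract mean ergodic theorem for power-bounded operators yields norm convergence. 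Replace your Dini paragraph with this weak-convergence-plus-abstract-MET argument and the proof goes through; everything else you wrote is correct and aligns with the paper.
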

\begin{proof}
  Suppose that $T_\phi$ is mean ergodic. For each $f\in \fix(T_{\phi_l})$ there is 
  a function $\tilde{f}\in \mathrm{C}(K)$ such that 
  $\tilde{f}|_{K_l} = f$. If we denote the mean ergodic projection of $T_\phi$
  by $P$, then $P\tilde{f}\in\fix(T_\phi) = T_q(\mathrm{C}(L))$ and hence $P\tilde{f}$ is
  constant on each fiber. Therefore, $f = P\tilde{f}|_{K_l}$ is constant and $\fix(T_{\phi_l})$
  is one-dimensional. Thus, $T_{\phi_l}$ is mean ergodic since the Ces\'{a}ro averages 
  converge uniformly on $K$ and in particular on $K_l$. Hence, each fiber 
  $(K_l, \phi_l)$ is uniquely ergodic.
  
  Now assume that each fiber $(K_l, \phi_l)$ is uniquely ergodic and let 
  $\mu_l$ denote the corresponding unique invariant probability measure.
  Using this and \cref{supportlem} below, we obtain that the graph of the map
  $l\mapsto \mu_l$ is closed. Since this map takes values in the compact
  set $\mathrm{M}_\phi^1(K)$, applying the closed graph theorem for compact 
  spaces 
  (see \cite[Theorem XI.2.7]{Dugundji1967})
  we conclude that 
  the map $l\mapsto \mu_l$ is weak$^*$-continuous.
  Since each fiber 
  is uniquely ergodic, we also have
  \begin{align*}
    \lim_{n\to\infty} A_nf(x)
    = \int_K f\dmu_{q(x)}
    = \big\langle f, \mu_{q(x)}\big\rangle
  \end{align*}
  and this depends continuously on $x$, showing that $T_\phi$ is mean ergodic.
  The equivalence of \ref{item:merg2} and \ref{item:merg3} is well-known
  for each fiber. Assertion \ref{item:merg4} implies that $\fix(T_\phi)$ separates $\fix(T_\phi')$
  and hence that $T_\phi$ is mean ergodic. Conversely, if $T_\phi$ is mean ergodic, 
  a short calculation shows that the inverse of the map in \ref{item:merg4} is given by 
  $\nu \mapsto \int_L \mu_l\dnu$ where $\mu_l$ is the unique $\phi$-invariant 
  probability measure on $K$.
\end{proof}

\begin{lemma}\label{supportlem}
  Let $(K, \phi)$ be a topological dynamical system and $\mu\in\mathrm{M}(K)$ a probability measure.
  Then $\supp(\mu) \subset K_l$ if and only if $T_q'\mu = \delta_l$.
\end{lemma}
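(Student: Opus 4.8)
The plan is to prove \cref{supportlem} by unwinding the definition of the maximal trivial factor $L$ as the Gelfand spectrum of $\fix(T_\phi)$ and exploiting the fact that a probability measure is concentrated on $K_l$ precisely when it annihilates every function in $\mathrm{C}(K)$ vanishing on $K_l$. First I would fix notation: $q\colon K\to L$ is the factor map, so $T_q$ identifies $\mathrm{C}(L)$ with $\fix(T_\phi)\subset\mathrm{C}(K)$, and under this identification a point $l\in L$ corresponds to the multiplicative functional $h\mapsto (T_qh)(x)$ for any $x\in K_l$ (this is well-defined since $T_qh$ is constant on fibers of $q$). I would then observe $T_q'\mu = \delta_l$ means exactly $\langle T_qh,\mu\rangle = h(l)$ for all $h\in\mathrm{C}(L)$, i.e. $\int_K (T_qh)\dmu = (T_qh)(x)$ for $x\in K_l$.

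For the implication ``$\supp(\mu)\subset K_l \Rightarrow T_q'\mu=\delta_l$'': if $\mu$ is supported on $K_l = q^{-1}(l)$, then for any $h\in\mathrm{C}(L)$ the function $T_qh = h\circ q$ takes the constant value $h(l)$ on $K_l$, hence on $\supp(\mu)$, so $\langle h, T_q'\mu\rangle = \langle T_qh,\mu\rangle = h(l)\cdot\mu(K) = h(l) = \langle h,\delta_l\rangle$. Since $h\in\mathrm{C}(L)$ was arbitrary and $\mu$ is a probability measure, $T_q'\mu=\delta_l$.

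For the converse ``$T_q'\mu=\delta_l \Rightarrow \supp(\mu)\subset K_l$'': suppose there is $x_0\in\supp(\mu)\setminus K_l$. Then $q(x_0) =: l' \neq l$ in $L$, and by Urysohn's lemma on the compact Hausdorff space $L$ there is $h\in\mathrm{C}(L)$ with $0\leq h\leq 1$, $h(l') = 1$, $h(l) = 0$. Consider $g \defeq T_qh = h\circ q\in\mathrm{C}(K)$; then $0\leq g\leq 1$, $g(x_0)=1$, and $g$ vanishes on $K_l = q^{-1}(l)$. On the other hand $\langle g,\mu\rangle = \langle h, T_q'\mu\rangle = \langle h,\delta_l\rangle = h(l) = 0$, so $\int_K g\dmu = 0$ with $g\geq 0$ forces $g = 0$ $\mu$-a.e., contradicting that $g$ is continuous, nonnegative, equal to $1$ at the point $x_0\in\supp(\mu)$ (so $g > 1/2$ on a neighbourhood of $x_0$ which must have positive $\mu$-measure). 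Hence $\supp(\mu)\subset K_l$.

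I do not anticipate a serious obstacle here: the only point requiring a little care is making precise the identification of points of $L$ with fibers of $q$ and the corresponding multiplicative functionals on $\fix(T_\phi)$, which is immediate from \cref{prop:factorbundle} and \cref{factororder}. Everything else is a routine application of Urysohn's lemma and the definition of support. One could alternatively phrase the whole argument purely in terms of the preimage structure $K_l = q^{-1}(l)$ without mentioning Gelfand duality at all, since once we know $q\colon K\to L$ is a factor map, ``$\supp(\mu)\subset q^{-1}(l)$'' versus ``$q_*\mu = \delta_l$'' is just the standard fact that a Borel probability measure pushes to a Dirac mass exactly when it is concentrated on the corresponding fiber; I would likely present it in this cleaner form and only invoke $T_q' = q_*$ to match the notation of \cref{thm:mergchar}.
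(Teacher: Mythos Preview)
Your proof is correct and follows essentially the same approach as the paper's: both directions rely on Urysohn's lemma applied on $L$ and the identity $\langle T_qh,\mu\rangle = \langle h, T_q'\mu\rangle$. The only cosmetic difference is that for the converse the paper shows $\langle f,\mu\rangle = 0$ for every positive $f\in\mathrm{C}(K)$ with $\supp(f)\cap K_l=\emptyset$ by dominating $f\leq T_qg$, whereas you argue by contradiction from a single point $x_0\in\supp(\mu)\setminus K_l$; these are equivalent formulations of the same idea.
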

\begin{proof}
  Assume that $\supp(\mu) \subset K_l$. If $g\in \mathrm{C}(L)$ satisfies
  $g(l) = 0$, then $T_qg$ is zero on $K_l$ and hence on $\supp(\mu)$, meaning 
  that 
  \begin{align*}
    \langle g, T_q'\mu\rangle = \langle T_qg, \mu\rangle = 0.
  \end{align*}
  So $\supp(T_q'\mu) \subset \{l\}$ and since $T_q'\mu$ is a probability
  measure, we conclude that $T_q'\mu = \delta_l$.

  Now suppose $T_q'\mu = \delta_l$.
  If $f\in\mathrm{C}(K)$ is positive and such that 
  $\|f\| \leq 1$ and $\supp(f)\cap K_l = \emptyset$, then $l\not\in q(\supp(f))$ and by Urysohn's 
  lemma there is a function $g\in\mathrm{C}(L)$ equal to 1 on 
  $q(\supp(f))$ satisfying $g(l) = 0$. But then $f\leq T_qg$ and hence
  \begin{align*}
    0 
    \leq \langle f, \mu\rangle 
    \leq \langle T_qg, \mu\rangle 
    = \langle g, T_q'\mu\rangle
    = \langle g, \delta_l\rangle
    = 0.
  \end{align*}
  So $\langle f, \mu\rangle = 0$ and we conclude that $\supp(\mu) \subset K_l$
\end{proof}

\begin{remark}\label{mergbasespace}
  In the proof of the implication \ref{item:merg2} $\implies$ \ref{item:merg1}, the information that we
  considered fibers with respect to the maximal trivial factor $L$ was not used. 
  In fact, let $B$ be any trivial factor 
  such that the corresponding
  fibers are uniquely ergodic. The existence of a continuous surjection 
  $r\colon L\to B$ from \cref{factororder} then shows that each fiber $K_b$ is contained in
  a fiber $K_l$. But since each fiber $(K_l, \phi_l)$ is also uniquely ergodic by \cref{thm:mergchar},
  it cannot contain more than one of the sets $K_b$ and so $r$ has to be a homeomorphism.
  Therefore, any bundle of topological dynamical systems $(K, B, p; \phi)$ with uniquely
  ergodic fibers is automatically isomorphic
  to the bundle $(K, L, q; \phi)$ and we may hence assume that $B = L$ and 
  $p = q$.
\end{remark}

\subsection{Group bundles}\label{sec:groupbundles}
We now introduce the main object of this paper: bundles of topological dynamical
systems for which each fiber is a group rotation.

\begin{definition}
  A bundle $(\mathcal{G}, B, p)$ is called a \emph{group bundle} if 
  $(\mathcal{G}, B, p)$ is a bundle and each fiber 
  $\mathcal{G}_b$ carries a group structure such that 
  \begin{enumerate}[(i)]
   \item the multiplication 
   \begin{align*}
      \{(g, g')\in \mathcal{G}\times \mathcal{G} \mid p(g) = p(g')\} \to \mathcal{G}, \quad (g, g') \mapsto gg'
   \end{align*}
   is continuous,
   \item the inverse $\mathcal{G}\to \mathcal{G}$, $g \mapsto g^{-1}$ is continuous and
   \item the neutral element $e_b\in \mathcal{G}_b$ depends continuously on $b\in B$.
  \end{enumerate}
  A bundle $(\mathcal{G}, B, p; \phi)$ is called a \emph{group 
  rotation bundle} if $(\mathcal{G}, B, p)$ is a group bundle, 
  $\phi\colon\mathcal{G}\to\mathcal{G}$ is continuous and
  \begin{enumerate}[(i), resume]
  \item there is a continuous
    section $\alpha\colon B\to \mathcal{G}$ with $(\mathcal{G}_b, \phi_b) = (\mathcal{G}_b, \phi_{\alpha(b)})$.
  \end{enumerate}
  A morphism $(\Theta, \theta)\colon (\mathcal{G}_1, B_1, p_1) \to (\mathcal{G}_2, B_2, p_2)$
  of group bundles is a bundle morphism such that $\Theta$ is a group homomorphism restricted to 
  each fiber. It is called a \emph{morphism of group rotation bundles} if, in addition, 
  $\Theta$ is a morphism of the corresponding dynamical systems.
  A group bundle $(\mathcal{G}, B, p)$ is called \emph{trivial} if 
  there is a group $G$ such that $(\mathcal{G}, B, p) = (G\times B, B, \pi_B)$. We call
  $(\mathcal{G}, B, p)$ \emph{trivializable} if there is an isomorphism
  \begin{align*}
    \iota\colon (\mathcal{G}, B, p) \to (G\times B, B, \pi_B).
  \end{align*}
  We call it \emph{subtrivializable} and $\iota$ a ($G$-)\emph{subtrivialization} if 
  $\iota$ is merely an embedding. We say that two subtrivializations 
  \begin{align*}
    \iota_1\colon (\mathcal{G}_1, B_1, p_1) \to (G\times B_1, B_1, \pi_{B_1}) \\
    \iota_2\colon (\mathcal{G}_2, B_2, p_2) \to (G\times B_2, B_2, \pi_{B_2})
  \end{align*}
  are \emph{isomorphic} if there is an isomorphism 
  $(\Theta, \theta)\colon (\mathcal{G}_1, B_1, p_1) \to (\mathcal{G}_2, B_2, p_2)$
  such that the diagram 
  \begin{align*}
    \xymatrix@C=2.5cm{
      G\times B_1 \ar[r]^-{(g, b) \mapsto (g, \theta(b))} & G\times B_2 \\
      \mathcal{G}_1 \ar[u]^-{\iota_1} \ar[r]^-{\Theta} & \mathcal{G}_2 \ar[u]^-{\iota_2}
    }
  \end{align*}
  commutes.
\end{definition}

\begin{example}\label{nosection}
  As an example of a bundle of topological dynamical systems for which each fiber is a 
  group rotation, yet no continuous section $\alpha\colon B\to K$ exists, recall 
  the bundle from \cref{nosecexample} and equip it with the dynamic 
  $\phi\colon K\to K$, $z\mapsto -z$.
  The fibers here may be interpreted as copies of $(\Z_2, n \mapsto n + 1)$ and 
  it was seen in \cref{nosecexample} that this bundle does not admit continuous
  sections. 
\end{example}

\begin{remark}
  Products and pullbacks of group rotation bundles canonically are again group 
  rotation bundles. 
  However, when passing to factors, the existence of continuous sections may be
  lost, as seen in \cref{bundleexample}. If, however, such a factor $(\mathcal{G}', B', p'; \phi')$ 
  has a continuous section $s\colon B'\to \mathcal{G}'$, it is again a group 
  rotation bundle.
\end{remark}

\begin{remark}
  The notion of group bundles is not new: It has been considered 
  as a special case of locally compact groupoids, in e.g., \cite[Chapter 1]{Renault1980}.
\end{remark}

In order to decompose systems with discrete spectrum, we single 
out group rotation bundles for which each fiber is minimal. Recall the following 
characterization of minimal group rotations.

\begin{theorem}[{\cite[Theorem 10.13]{EFHN2015}}]\label{minimalchar}
  Let $G$ be a compact group, $\mathrm{m}$ the Haar measure on $G$ and consider
  a group rotation $(G, a)$. Then the following assertions are equivalent.
  \begin{enumerate}[(a)]
    \item $(G, a)$ is minimal.
    \item $(G, a)$ is uniquely ergodic.
    \item $\mathrm{m}$ is the only invariant 
    probability measure for $(G, a)$.
    \item $(G, \mathrm{m}; a)$ is ergodic.
  \end{enumerate}
\end{theorem}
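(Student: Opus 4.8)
The plan is to prove the cycle of implications (a) $\Rightarrow$ (b) $\Rightarrow$ (c) $\Rightarrow$ (d) $\Rightarrow$ (a). The organizing observation is that minimality of $(G,a)$ is equivalent to density of the cyclic subgroup $\langle a\rangle$ in $G$: the forward orbit closure of the neutral element is $H\defeq\overline{\{a^n\mid n\geq 0\}}$, which is a closed subsemigroup of the compact group $G$ and therefore, by the standard argument that a compact semigroup contains an idempotent and $e$ is the only idempotent of a group, a closed subgroup, namely $H=\overline{\langle a\rangle}$. Since right translations commute with $\phi_a$, every orbit closure has the form $Hg$, so $(G,a)$ is minimal precisely when $H=G$. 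In particular, once $(G,a)$ is minimal one gets $G=\overline{\langle a\rangle}$, hence $G$ is \emph{abelian}, and the remaining work can proceed by harmonic analysis on $G$.

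For (a) $\Rightarrow$ (b) I would use Pontryagin duality: since $G$ is compact abelian, the characters $\chi\in\hat{G}$ span a dense subalgebra of $\mathrm{C}(G)$ and each is an eigenfunction of the Koopman operator $T_{\phi_a}$ with eigenvalue $\chi(a)$. A character with $\chi(a)=1$ is constant on $\overline{\langle a\rangle}=G$ and hence trivial, so every nontrivial character satisfies $\chi(a)\neq 1$; consequently $A_n\chi\to 0$ uniformly, while $A_n\1=\1$, and by linearity, density and $\|A_n\|\leq 1$ this yields $A_nf\to\langle f,\mathrm{m}\rangle\1$ uniformly for every $f\in\mathrm{C}(G)$. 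The standard criterion for unique ergodicity (uniform convergence of the Ces\`aro means to a constant function) then gives (b), with $\mathrm{m}$ as the unique invariant measure. The implication (b) $\Rightarrow$ (c) is immediate because the Haar measure is $\phi_a$-invariant, so ``$(G,a)$ uniquely ergodic'' literally says ``$\mathrm{m}$ is the only invariant probability measure''; and (c) $\Rightarrow$ (d) holds because the ergodic measures are precisely the extreme points of the compact convex set $\mathrm{M}_{\phi_a}^1(G)$, and the only point of a singleton set is extreme.

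The remaining implication (d) $\Rightarrow$ (a) is where I expect essentially all of the work to sit. Assuming $(G,\mathrm{m};a)$ ergodic, suppose toward a contradiction that $H=\overline{\langle a\rangle}\neq G$. Then the right coset space $H\backslash G$ is a compact Hausdorff space (since $H$ is closed) with at least two points, so it carries a non-constant function $h\in\mathrm{C}(H\backslash G)$; its pullback $f\defeq h\circ\pi$ along the quotient map $\pi\colon G\to H\backslash G$ is continuous, non-constant, and $\phi_a$-invariant, because $a\in H$ forces $Hag=Hg$ for all $g\in G$. Viewing $f$ as an element of $\mathrm{L}^2(G,\mathrm{m})$, ergodicity makes it $\mathrm{m}$-almost everywhere constant; but $\mathrm{m}$ has full support, since a nonempty open set is covered by finitely many of its translates and hence has positive measure, so an almost everywhere constant continuous function is globally constant, contradicting the choice of $h$. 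Hence $H=G$ and $(G,a)$ is minimal. The main obstacle is thus isolating and applying the principle that the orbit closures of a group rotation are exactly the cosets of $\overline{\langle a\rangle}$, together with the full-support property of Haar measure; the other three implications are either formal or a one-line character computation.
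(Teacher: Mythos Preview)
Your argument is correct. Note, however, that the paper does not supply its own proof of this statement: it is quoted from \cite[Theorem 10.13]{EFHN2015} and used as a black box, so there is no in-paper proof to compare against. The cycle (a) $\Rightarrow$ (b) $\Rightarrow$ (c) $\Rightarrow$ (d) $\Rightarrow$ (a) you give---via the identification of orbit closures with cosets of $\overline{\langle a\rangle}$, a character computation for the uniform convergence of Ces\`aro means, the extreme-point characterization of ergodic measures, and the full-support property of Haar measure for the contrapositive---is a standard and complete route to the result.
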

\vspace{0.2cm}

\begin{remark}\label{disintegration}
  Let $(\mathcal{G}, B, p; \phi)$ be a group rotation bundle such that each 
  fiber is minimal. Then by \Cref{minimalchar}, 
  every fiber is uniquely ergodic, the unique
  $\phi$-invariant probability measure being the Haar measure $\mathrm{m}_b$ on the
  group $\mathcal{G}_b$. 
  \cref{mergbasespace} yields that we may therefore assume that 
  $B = L$ and $p = q$ where $q\colon \mathcal{G}\to L$ is the projection onto 
  the maximal trivial factor. Moreover, if $\mathrm{m}_l$ denotes the 
  Haar measure on $\mathcal{G}_l$, the map 
  $l \mapsto \mathrm{m}_l$ is weak$^*$-continuous. If $\mu$ is a 
  $\phi$-invariant measure on $\mathcal{G}$, we
  define the pushforward measure $\nu \defeq q_*\mu$ on $L$ and
  disintegrate $\mu$ as in the proof of \cref{thm:mergchar} via
  \begin{align*}
    \mu = \int_{L} \mathrm{m}_l \dnu.
  \end{align*}
  This will be important for \cref{measurablebundle}.
\end{remark}

We now turn towards a generalization of the dual of a locally compact group.
This will be needed for \Cref{sec:hvn}.

\begin{construction}
  Let $(\mathcal{G}, B, q)$ be a locally compact group bundle. Set
  \begin{align*}
    \mathcal{G}^* \defeq \bigcup_{b\in B} \left(\mathcal{G}_b\right)^*
  \end{align*}
  where $(\mathcal{G}_b)^*$ is the dual group of $\mathcal{G}_b$
  and denote by $\pi_B\colon \mathcal{G}^* \to B$ the canonical projection onto
  $B$. Next, let $h\in\mathrm{C}_{\mathrm{c}}(\mathcal{G})$,
  $F\in \mathrm{C}(\mathcal{G})$ and $\epsilon > 0$. Set
  \begin{align*}
    N(h, F, \epsilon) \defeq 
    \left\{ \chi\in\mathcal{G}^* \mmid  
    \left\| \chi h_{\pi_B(\chi)} - 
(Fh)_{\pi_B(\chi)} \right\|_\infty < \epsilon\right\}.
  \end{align*}
  The family of these sets forms a subbasis for a 
  topology which we call
  the \emph{topology of compact convergence} on $\mathcal{G}^*$. 
  
  With this topology, the projection 
  $\pi_B$ is continuous as can be deduced from the continuity of the 
  neutral element section $e\colon B\to \mathcal{G}$ by invoking Urysohn's lemma and Tietze's
  extension theorem to construct appropriate functions $h$ and $F$. 
  Therefore, 
  $(\mathcal{G}^*, B, \pi_B)$ is a bundle which we call the \emph{dual bundle} of 
  $(\mathcal{G}, B, q)$ and hence may also denote by $(\mathcal{G}, B, q)^*$.
  If $(\Theta, \theta) \colon (\mathcal{G}, L, q) \to (\mathcal{H}, 
  L, p)$ is a
  morphism of group bundles such that $\theta$ is bijective, define its \emph{dual 
  morphism} $(\Theta^*, \theta^{-1}) \colon (\mathcal{H}^*, B', q) \to (\mathcal{G}^*, 
  B, p)$ by setting
  $\Theta^*\colon \mathcal{H}^* \to \mathcal{G}^*$,  
  $\chi \mapsto (\Theta_{\pi_L(\chi)})^*\chi$.
\end{construction}

For later reference and the convenience of the reader, we list some 
basic properties of dual bundles. To this end, we recall the following
notions.

\begin{definition}
  Let $X$ and $Y$ be topological spaces. A map $F\colon X\to \mathcal{P}(Y)$ is
  called \emph{lower-} (resp. \emph{upper-}) \emph{semicontinuous} in a point 
  $x\in X$ if for every open $U\subset Y$ such that $F(x)\cap U \neq \emptyset$
  (resp. $F(x) \subset U$) there exists an open neighborhood
  $V$ of $x$ such that for all $x'\in V$ one has $F(x')\cap U \neq \emptyset$
  (resp. $F(x') \subset U$).
  A bundle $(K, B, p)$ is called \emph{lower-} (resp. \emph{upper-}) \emph{semicontinuous} 
  if the map $b\mapsto K_b$ is lower-semicontinuous in each point $b\in B$ and 
  \emph{continuous} if it is both lower- and upper-semicontinuous.
\end{definition}

\begin{remark}
  A bundle is lower-semicontinuous if and only if the bundle projection is an open map.
\end{remark}

\begin{definition}
  Let $(X, B, p)$ and $(Y, B, q)$ be two bundles. Then their \emph{sum}
  is defined as $(X\oplus Y, B, \pi_B)$ where 
  \begin{align*}
    X\oplus Y \defeq \{ (x, y)\in X\times Y \mid p(x) = q(y)\}
  \end{align*}
  and $\pi_B(x, y) \defeq p(x)$.
\end{definition}

\begin{proposition}\label{dualprops}
  Let $(\mathcal{G}, B, p)$ and $(\mathcal{H}, B', p')$ be 
  locally compact abelian group bundles and 
  $(\Theta, \theta)\colon (\mathcal{G}, B, p) \to (\mathcal{H}, B', p')$
  a morphism of group bundles such that $\theta$ is bijective.
  \begin{enumerate}[(i)]
    \item \label{item:first} The evaluation map
      $\mathrm{ev}\colon\mathcal{G}^*\oplus\mathcal{G} \to \C$, $(\chi, g) \mapsto \chi(g)$
    is continuous. In fact, a net $(\chi_{i})_{i\in I}$ converges to $\chi\in\mathcal{G}^*$ if 
    and only if 
    $\pi_B(\chi_i)\to \pi_B(\chi)$ and for every convergent net $(g_i)_{i\in I}$ with
    $p(g_i) = \pi_B(\chi_i)$ and limit 
    $g\in \mathcal{G}$ we have $\chi_{i}(g_{i}) \to \chi(g)$.
    \item \label{item:unambiguous} For $b\in B$, $\id_{(\mathcal{G}_b)^*}\colon
    (\mathcal{G}_b)^*\to(\mathcal{G}^*)_b $ is an isomorphism of locally compact groups. 
    In particular, the notation $\mathcal{G}_b^*$ is unambiguous.
    \item \label{item:bundleproduct} If $G$ is a locally compact group and $L$ is a compact space,
    $(G\times L, L, \pi_L)^* = (G^*\times L, L, \pi_L)$.    
    \item \label{item:bundlehausdorff} The bundle $(\mathcal{G}, B, p)$ is lower-semicontinuous if 
    and only if $\mathcal{G}^*$ is a 
    Hausdorff space.
    \item \label{item:dualcont}The dual morphism $\Theta^*$ is continuous and
    \begin{align*}
      (\Theta^*, \theta^{-1}) \colon (\mathcal{H}^*, B', \pi_{B'}) \to (\mathcal{G}^*, B, \pi_B)
    \end{align*}
    is a morphism of group bundles.
    \item\label{item:properembed}
    If $\Theta$ is proper and surjective, $\Theta^*\colon\mathcal{H}^*\to \mathcal{G}^*$
    is an embedding.
    \item\label{item:compactembed} If $\mathcal{G}$ is compact and $\Theta$ surjective, $\Theta^*\colon\mathcal{H}^*\to 
\mathcal{G}^*$
    is an embedding.
  \end{enumerate}
\end{proposition}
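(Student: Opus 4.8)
The plan is to treat assertion~\ref{item:first} as the hub: once the description of convergent nets in $\mathcal{G}^*$ is in hand, the remaining assertions reduce to bookkeeping with nets, using repeatedly that a topology on a set is determined by which nets converge to which limits. For~\ref{item:first} itself I would argue directly from the subbasic sets $N(h,F,\epsilon)$ defining the topology of compact convergence. For the ``only if'' implication (which in particular yields continuity of $\mathrm{ev}$): if $\chi_i\to\chi$ and $g_i\to g$ with $p(g_i)=\pi_B(\chi_i)$, I would choose $h\in\mathrm{C}_{\mathrm c}(\mathcal{G})$ with $h\equiv 1$ near $g$ and, by Urysohn's lemma and Tietze's extension theorem applied on the compact set $\supp h\cap\mathcal{G}_{\pi_B(\chi)}$, a function $F\in\mathrm{C}(\mathcal{G})$ agreeing with $\chi$ there; then $\chi\in N(h,F,\epsilon)$ for every $\epsilon$, hence $\chi_i\in N(h,F,\epsilon)$ eventually, and evaluation at $g_i$ (where $h$ is eventually $1$ and $F(g_i)\to F(g)=\chi(g)$) forces $\chi_i(g_i)\to\chi(g)$. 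For the ``if'' implication I would use compactness of $\supp h$: should $\chi_i\notin N(h,F,\epsilon)$ along a cofinal set of indices, pick witnesses $g_i\in\mathcal{G}_{\pi_B(\chi_i)}\cap\supp h$, pass to a subnet with $g_i\to g\in\mathcal{G}_{\pi_B(\chi)}$, and contradict $\chi\in N(h,F,\epsilon)$ using the hypothesis $\chi_i(g_i)\to\chi(g)$.

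For~\ref{item:unambiguous}, intersecting the sets $N(h,F,\epsilon)$ with a single fiber $(\mathcal{G}^*)_b$ gives exactly the sets $\{\psi\in\mathcal{G}_b^*\mid\sup_{\mathcal{G}_b}|\psi-F|\,|h|<\epsilon\}$, and --- again using Urysohn's lemma and Tietze's theorem to prescribe $h$ and $F$ on compact subsets of $\mathcal{G}_b$ --- one checks that each such set is a neighborhood of each of its points in the dual group topology while each compact-open subbasic neighborhood contains one of them; hence the subspace topology on the fiber is the dual group topology and the identity map is an isomorphism of locally compact groups. Assertion~\ref{item:bundleproduct} then follows from~\ref{item:first} and~\ref{item:unambiguous} together with the classical identification of compact-open convergence of characters with continuous convergence: both bundles have the same underlying set and fibers, and a net converges in one precisely when it converges in the other with the same limit, so the topologies coincide.

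For~\ref{item:bundlehausdorff}, recall that lower-semicontinuity of the bundle means exactly that $p$ is open. If $p$ is open and $\chi\neq\chi'$ lie over the same $b$, I would pick $g_0$ with $\chi(g_0)\neq\chi'(g_0)$, $h$ with $h(g_0)=1$, functions $F,F'$ equal to $\chi$, resp.\ $\chi'$, on $\supp h\cap\mathcal{G}_b$, and a small open $W\ni g_0$ with $W\subseteq\supp h$ on which $|F-F'|\,|h|>c$ for some $c>0$; then $N(h,F,c/3)\cap\pi_B^{-1}(p(W))$ and $N(h,F',c/3)\cap\pi_B^{-1}(p(W))$ are disjoint open neighborhoods of $\chi$ and $\chi'$, openness of $p$ being what makes $\pi_B^{-1}(p(W))$ open, while characters over distinct base points are separated by pulling back disjoint open sets of the Hausdorff space $B$ along $\pi_B$. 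Conversely, if $p$ is not open, I would choose an open $U$, a point $b\in p(U)$ not interior to $p(U)$, a point $g_0\in U\cap\mathcal{G}_b$ (necessarily $g_0\neq e_b$, since otherwise continuity of the neutral section would put $b$ in the interior of $p(U)$), and a net $b_i\to b$ with $\mathcal{G}_{b_i}\cap U=\emptyset$; the trivial characters $\mathbf 1_{b_i}$ then converge to $\mathbf 1_b$ by~\ref{item:first}, and also to every $\chi_0\in\mathcal{G}_b^*$ trivial on the closed subgroup $A\defeq\overline{\{g\in\mathcal{G}_b\mid g=\lim_i g_i\text{ for some net }g_i\in\mathcal{G}_{b_i}\}}$, which is contained in $\mathcal{G}_b\setminus U$; since $g_0\notin A$, Pontryagin duality supplies such a $\chi_0$ with $\chi_0(g_0)\neq 1$, whence $\chi_0\neq\mathbf 1_b$ and $\mathcal{G}^*$ fails to be Hausdorff.

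Finally, for~\ref{item:dualcont} one notes that $\Theta^*$ maps the fiber over $b'$ into the fiber over $\theta^{-1}(b')$ by precomposition with the fiber homomorphism $\Theta_{\theta^{-1}(b')}$, so that $\pi_B\circ\Theta^*=\theta^{-1}\circ\pi_{B'}$ with $\theta^{-1}$ continuous (a continuous bijection of the compact space $B$ onto the Hausdorff space $B'$ is a homeomorphism) and $\Theta^*$ restricts to a continuous homomorphism on each fiber, while continuity of $\Theta^*$ is immediate from~\ref{item:first}: if $\chi_i\to\chi$ in $\mathcal{H}^*$ and $g_i\to g$ in $\mathcal{G}$ with $p(g_i)=\pi_B(\Theta^*(\chi_i))$, then $\Theta(g_i)\to\Theta(g)$ with $p'(\Theta(g_i))=\pi_{B'}(\chi_i)$, so $\Theta^*(\chi_i)(g_i)=\chi_i(\Theta(g_i))\to\chi(\Theta(g))=\Theta^*(\chi)(g)$. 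For~\ref{item:properembed}, $\Theta^*$ is injective because each $\Theta_b$ is surjective ($\Theta$ surjective, $\theta$ injective), and continuity of its inverse is checked via the criterion of~\ref{item:first}: assuming $\Theta^*(\chi_i)\to\Theta^*(\chi)$ and given $h_i\to h$ with $p'(h_i)=\pi_{B'}(\chi_i)$, lift to $g_i\in\Theta^{-1}(h_i)$ inside the compact set $\Theta^{-1}(D)$ for a compact neighborhood $D$ of $h$ --- this is where properness is used --- pass to a subnet $g_{i_j}\to g$ with $\Theta(g)=h$, apply~\ref{item:first} along it to get $\chi_{i_j}(h_{i_j})\to\chi(h)$, and upgrade to the full net by the usual subnet argument in the compact group $\T$. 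Then~\ref{item:compactembed} is the special case of~\ref{item:properembed} with $\mathcal{G}$ compact, since a continuous map from a compact space to a Hausdorff space is automatically proper. I expect the genuine work to be the proof of~\ref{item:first} and, within~\ref{item:bundlehausdorff}, the non-Hausdorff implication, where one must check that $A$ is a closed subgroup of $\mathcal{G}_b$ missing $g_0$ and deal with the degenerate fibers; the rest is translation into statements about nets followed by an appeal to~\ref{item:first}.
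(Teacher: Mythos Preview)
Your proposal is correct and shares the paper's overall architecture: part~\ref{item:first} is the hub, proved via Urysohn and Tietze, and the remaining parts are derived from the net characterization it provides; in particular your treatment of the converse in~\ref{item:bundlehausdorff} via the subgroup of ``attainable limits'' is exactly the paper's argument. Two local differences are worth noting. For the forward direction of~\ref{item:bundlehausdorff} you build explicit separating subbasic neighborhoods, whereas the paper simply observes (using~\ref{item:first} and openness of $p$ to lift points to approximating nets) that every convergent net in $\mathcal{G}^*$ has a unique limit, which is shorter. For~\ref{item:properembed} you lift a net $(h_i)$ in $\mathcal{H}$ to a relatively compact net in $\mathcal{G}$ via properness and apply~\ref{item:first}; the paper instead pulls back the subbasic neighborhoods directly, replacing $N(h,F,\epsilon)$ around $\chi$ by $N(h\circ\Theta,F\circ\Theta,\epsilon)$ around $\Theta^*(\chi)$, where $h\circ\Theta\in\mathrm{C}_{\mathrm c}(\mathcal{G})$ precisely because $\Theta$ is proper. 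The paper's argument in fact establishes a bit more than you do: it starts from $\Theta^*(\chi_i)\to\eta$ with $\eta$ not assumed to lie in the image, shows that $\eta$ factors through $\Theta$ (using that $\mathcal{H}_b$ carries the quotient topology from $\Theta_b$, another consequence of properness), and hence obtains that $\Theta^*$ is a \emph{closed} embedding. Your argument yields only a homeomorphism onto the image, which is all the statement claims, so both are adequate.
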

\begin{proof}
  The first part of \ref{item:first} follows from the definition of the 
  topology on $\mathcal{G}^*$ using local compactness to invoke Urysohn's lemma
  and Tietze's extension theorem which provide appropriate functions $h$ and $F$.
  The second part of \ref{item:first} is a simple proof by contradiction.
  For part \ref{item:unambiguous}, it suffices to show that the two sets carry
  the same topology. This follows from \ref{item:first} since it shows that the two topologies 
  have the same convergent nets. By the same argument, \ref{item:bundleproduct} follows 
  directly from \ref{item:first} and so does \ref{item:dualcont}, since 
  it suffices to show that 
  $\Theta^*$ is continuous. In
  \ref{item:bundlehausdorff}, we obtain the Hausdorff property from lower-semicontinuity 
  and \ref{item:first}, showing that every convergent net in $\mathcal{G}^*$ has a unique limit.
  For the converse implication in \ref{item:bundlehausdorff}, assume that $\mathcal{G}^*$ is Hausdorff but 
  $(\mathcal{G}, B, p)$ is not lower-semicontinuous. Then there exist $b\in B$, 
  an open subset $U\subset \mathcal{G}$ with $U\cap\mathcal{G}_b \neq \emptyset$ and a 
  net $(b_i)_{i\in I}$ such that $b_i\to b$ and $\mathcal{G}_{b_i}\cap U = \emptyset$ for all $i\in I$.
  Consider
  \begin{align*}
    H \defeq \left\{g \in \mathcal{G}_b \mmid \begin{matrix}
                                \text{There exists a net } (g_i)_{i\in I} \text{ in } \mathcal{G} \\
                                \text{ with } g_i \to g \text{ and } g_i\in\mathcal{G}_{b_i} \text{ for all } i\in I
                              \end{matrix}\right\}.
  \end{align*}
  This set forms a proper subgroup of $\mathcal{G}_b$ which is not dense 
  since $H\cap U_b = \emptyset$.
  If we now denote by $\chi_0^{(b_i)}$ the trivial 
  character on $\mathcal{G}_{b_i}$, then $\chi_0^{(b_i)} \to \chi$ for any character 
  $\chi\in\mathcal{G}_b^*$ such that $\chi|_{\overline{H}} \equiv 1$. But since 
  $\overline{H}$ is a proper subgroup of $\mathcal{G}_b$,
  there are at least two characters satisfying this. In particular, 
  $\mathcal{G}^*$ cannot be Hausdorff. 
  
  For part \ref{item:properembed} (which trivially implies \ref{item:compactembed}), note that 
  $\Theta^*$ is injective because $\Theta$ is surjective. Let 
  $(\chi_i)_{i\in I}$ be a net in $\mathcal{H}^*$ such that 
  $\Phi^*(\chi_i)$ converges to $\eta\in\mathcal{G}^*_b$. Then 
  $\eta(g) = \eta(g')$ if $\Theta(g) = \Theta(g')$ and so $\eta = \chi\circ \Theta$
  for a function $\chi\colon \mathcal{H}_b\to\C$. It is again multiplicative and continuous 
  because $H_b$ carries the final topology with respect to $\Theta_b$, so 
  $\chi\in\mathcal{H}_b^*$. Let $N(U, h, F, \epsilon)$ be an open neighborhood
  of $\chi$. Then $N(\theta^{-1}(U), h\circ \Theta, F\circ\Theta, \epsilon)$ is an open
  neighborhood of $\chi\circ\Theta$ and so $\chi_i\circ\Theta\in 
  N(\theta^{-1}(U), h\circ \Theta, F\circ\Theta, \epsilon)$ for $i \geq i_0$, implying 
  $\chi_i\in N(U, h, F, \epsilon)$ for $i \geq i_0$. Hence, $\chi_i\to \chi$.
\end{proof}


\section{Representation}\label{sec:representation}

The classical examples for systems with discrete spectrum are group 
rotations $(G, a)$ and trivial systems 
$(B, \id_B)$ as seen in \Cref{groupdiscrspec}. In \cref{bundlefactor} we show that, in fact, every system
with discrete spectrum canonically is a factor of a product 
$(G, a)\times (B, \id_B)$ and therefore can be constructed from these
two basic systems. We start with the topological case
and derive the measure-preserving case from this via topological models. As a result,
we generalize the Halmos-von Neumann representation theorem to not necessarily
minimal or ergodic systems with discrete spectrum in \cref{measurablebundle}.

We briefly recall the Halmos-von Neumann theorem for minimal topological systems $(K, \phi)$
and, because the 
proof of \cref{bundlechar} below is based on it, sketch a proof using the Ellis (semi)group
$\mathrm{E}(K, \phi) \defeq \overline{\{\phi^k \mid k\in\N\}}\subset K^K$
introduced by Ellis as the \enquote{enveloping semigroup}, see \cite{Ellis1959}.

\begin{theorem}\label{hvn}
  Let $(K, \phi)$ be a minimal topological dynamical system with discrete spectrum. Then 
  $(K, \phi)$ is isomorphic to a minimal group rotation $(G, \phi_a)$ on 
  an abelian compact group $G$. More precisely, for each $x_0\in K$ there 
  is a unique isomorphism $\delta_{x_0}\colon (\mathrm{E}(K, \phi), \phi) \to (K, \phi)$
  such that $\delta_{x_0}(\id_K) = x_0$.
\end{theorem}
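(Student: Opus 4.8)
The plan is to identify $K$ with its own Ellis semigroup. The first step is to invoke \cref{prop:topdiscrspec}: discrete spectrum of $T_\phi$ forces $\mathrm{E}(K,\phi) \subseteq K^K$ to be a compact group (under composition) of equicontinuous homeomorphisms of $K$, and by \cref{rem:ds}, transported along the isomorphism $\Phi$ of \cref{prop:topdiscrspec}, this group is moreover \emph{abelian}. Two consequences I would record at once: first, since $\mathrm{E}(K,\phi)$ is abelian, $\phi\circ\psi = \psi\circ\phi$ for every $\psi\in\mathrm{E}(K,\phi)$; and second, $\phi = \phi^1\in\mathrm{E}(K,\phi)$, so the dynamics on $(\mathrm{E}(K,\phi),\phi)$, namely $\psi \mapsto \phi\circ\psi$, is translation by the group element $\phi$.

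Next I would fix $x_0 \in K$ and define $\delta_{x_0}\colon \mathrm{E}(K,\phi) \to K$ by $\delta_{x_0}(\psi) \defeq \psi(x_0)$. As the restriction of the $x_0$-coordinate projection $K^K \to K$, this map is continuous; it satisfies $\delta_{x_0}(\id_K) = x_0$; and it is equivariant, since $\delta_{x_0}(\phi\circ\psi) = \phi(\psi(x_0)) = \phi(\delta_{x_0}(\psi))$. Surjectivity follows from minimality, because the image is a compact $\phi$-invariant subset of $K$ containing the dense orbit $\{\phi^n(x_0) : n\in\N\}$. For injectivity, if $\psi_1(x_0) = \psi_2(x_0)$ then, since $\psi_1$ and $\psi_2$ are continuous and commute with $\phi$, one gets $\psi_1(\phi^n x_0) = \phi^n\psi_1(x_0) = \phi^n\psi_2(x_0) = \psi_2(\phi^n x_0)$ for all $n$, so $\psi_1$ and $\psi_2$ agree on a dense set and hence coincide. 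Thus $\delta_{x_0}$ is a continuous bijection from a compact space onto a Hausdorff space, i.e.\ a homeomorphism, and therefore an isomorphism of topological dynamical systems $(\mathrm{E}(K,\phi),\phi) \to (K,\phi)$.

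Transporting the abelian compact group structure of $\mathrm{E}(K,\phi)$ to $K$ along $\delta_{x_0}$ then makes $G \defeq K$ an abelian compact group with neutral element $x_0$; under this identification $\phi$, being translation by $\phi\in\mathrm{E}(K,\phi)$, becomes the rotation by $a \defeq \delta_{x_0}(\phi) = \phi(x_0)$. Hence $(K,\phi)\cong(G,\phi_a)$, and $(G,\phi_a)$ is minimal because minimality passes through isomorphisms (alternatively, cf.\ \cref{minimalchar}). Finally, for uniqueness: if $\delta,\delta'$ are isomorphisms $(\mathrm{E}(K,\phi),\phi)\to(K,\phi)$ with $\delta(\id_K) = \delta'(\id_K) = x_0$, then $h \defeq \delta^{-1}\circ\delta'$ is an automorphism of $(\mathrm{E}(K,\phi),\phi)$ fixing $\id_K$; since $h$ commutes with $\phi$ it fixes every $\phi^n(\id_K) = \phi^n$, hence the dense set $\{\phi^n : n\in\N\}$, so $h = \id$ and $\delta = \delta'$.

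The step I expect to demand the most care is not any particular computation but the bookkeeping around the Ellis semigroup — making sure that $\mathrm{E}(K,\phi)$ is genuinely a \emph{topological} group (so that transporting its structure along $\delta_{x_0}$ really equips $K$ with a \emph{continuous} group operation) and that its elements are honest homeomorphisms that commute with $\phi$. All of this is already contained in \cref{prop:topdiscrspec} together with \cref{rem:ds}, so I anticipate no real obstacle.
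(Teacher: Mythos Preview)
Your argument is correct and follows the same approach as the paper: define the evaluation map $\delta_{x_0}\colon\psi\mapsto\psi(x_0)$ and use minimality to obtain bijectivity, then conclude. You supply considerably more detail than the paper's sketch (in particular the explicit injectivity computation via commutation with $\phi$, the transport of the group structure, and the uniqueness argument), all of which is sound.
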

\begin{proof}
  Pick a point $x_0\in K$ and consider the map
  \begin{align*}
    \delta_{x_0}\colon \mathrm{E}(K, \phi)\to K, \quad \psi \mapsto \psi(x_0).
  \end{align*}
  Since $K$ is minimal, $\delta_{x_0}$ is injective.
  Moreover, $\delta_{x_0}(\mathrm{E}(K, \phi))$ is a closed, invariant subset of $K$ which is not empty and
  hence $\delta_{x_0}(\mathrm{E}(K, \phi)) = K$. It is not difficult to check that the system
  $(\mathrm{E}(K, \phi), \phi)$ is isomorphic to $(K, \phi)$ via $\delta_{x_0}$.
\end{proof}

Note that the isomorphism in \Cref{hvn}
depends on the (non-canonical) choice of $x_0\in K$.

\begin{definition}\label{ellisbundle}
 Let $(K, B, p; \phi)$ be a bundle of topological dynamical systems. We then 
 set 
 \begin{align*}
  &\mathrm{E}(K, B, p; \phi) \defeq \bigcup_{b\in B} \mathrm{E}(K_b, \phi_b), \\
  &\alpha\colon B \to \mathrm{E}(K, B, p; \phi), \quad b\mapsto \phi_b, \\
  &\phi_\alpha\colon \mathrm{E}(K, B, p; \phi)  \to \mathrm{E}(K, B, p; \phi), \quad 
  \psi_b \mapsto \psi_b\circ\phi_b
 \end{align*}
 and let $\pi_B\colon\mathrm{E}(K, B, p; \phi) \to B$ denote the projection 
 onto the second component. We equip $\mathrm{E}(K, B, p; \phi)$ with the 
 final topology induced by the map $\rho\colon 
 \mathrm{E}(K, \phi)\times B\to \mathrm{E}(K, B, p; \phi)$, $(\psi, b)\mapsto \psi_b$
 and call $(\mathrm{E}(K, B, p; \phi), B, \pi_B; \alpha)$
 the \emph{Ellis semigroup bundle} of $(K, B, p; \phi)$.
\end{definition}

We abbreviate the Ellis semigroup bundle by $\mathrm{E}(K, B, p; \phi)$ if the 
context leaves no room for ambiguity.
We also note that it is a group rotation bundle if it is compact and 
$\mathrm{E}(K, \phi)$ is a group, in which case we call it the \emph{Ellis group
bundle}. We now give a criterion for the space $\mathrm{E}(K, B, p; \phi)$ to be compact.


\begin{lemma}\label{quotientcompact}
  Let $X$ be a locally compact space, $Y$ a (Hausdorff) uniform space,
  $B$ a compact space and $F\colon B \to\mathcal{P}(X)$ a set-valued map.
  Define an equivalence relation $\sim_F$ on $\mathrm{C}(X, Y) \times B$ via
  \begin{align*}
    (f, b) \sim_F (g, b') \quad \text{if} \quad 
    b = b' \text{ and } f|_{F(b)} = g|_{F(b)}
  \end{align*}
  and endow $\mathrm{C}(X, Y)$ with the topology of locally uniform convergence.
  Moreover, let $A\subset \mathrm{C}(X, Y)$ be a compact subset.
  If $F$ is lower-semicontinuous, then the 
  quotient $A\times B/{\sim_F}$ is a compact space.
\end{lemma}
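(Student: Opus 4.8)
The plan is to show that $A \times B/{\sim_F}$ is both quasi-compact and Hausdorff. Quasi-compactness is immediate: $A \times B$ is quasi-compact (product of the compact set $A$ with the compact space $B$), and the quotient of a quasi-compact space is quasi-compact, so nothing is needed there. The entire content of the lemma is the Hausdorff property, and this is where the lower-semicontinuity of $F$ enters. Since $A \times B$ is compact, it suffices to show that the equivalence relation $\sim_F$ is closed in $(A \times B) \times (A \times B)$; a closed equivalence relation on a compact space yields a Hausdorff (hence compact) quotient, and this is the standard route I would take.

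So the core step is: suppose $(f_i, b_i) \sim_F (g_i, b_i)$ for all $i$ in some net, with $(f_i, b_i) \to (f, b)$ and $(g_i, b_i) \to (g, b)$ in $A \times B$; I must show $f|_{F(b)} = g|_{F(b)}$. Fix $x \in F(b)$. Since $F$ is lower-semicontinuous at $b$, for every open neighbourhood $U$ of $x$ in $X$ and every $i$ large enough there is a point $x_i \in F(b_i) \cap U$; using local compactness of $X$ and a cofinal refinement, I can arrange a net $x_i \in F(b_i)$ with $x_i \to x$, staying inside a fixed compact neighbourhood of $x$. On that compact neighbourhood, $f_i \to f$ and $g_i \to g$ uniformly (locally uniform convergence), so $f_i(x_i) \to f(x)$ and $g_i(x_i) \to g(x)$ — here I use that evaluation $\mathrm{C}(X,Y) \times X \to Y$ is jointly continuous on sets where convergence is uniform, together with continuity of $f$ and $g$. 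But $f_i(x_i) = g_i(x_i)$ since $x_i \in F(b_i)$ and $(f_i, b_i) \sim_F (g_i, b_i)$, and $Y$ is Hausdorff, so passing to the limit gives $f(x) = g(x)$. As $x \in F(b)$ was arbitrary, $(f, b) \sim_F (g, b)$, proving $\sim_F$ is closed.

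The main obstacle is the extraction of the net $x_i \to x$ with $x_i \in F(b_i)$: lower-semicontinuity only gives, for each neighbourhood $U$ of $x$, a \emph{tail} of indices for which $F(b_i) \cap U \neq \emptyset$, not a single coherent net. The standard fix is to pass to the directed set of pairs $(i, U)$ where $U$ ranges over neighbourhoods of $x$ and $i$ is large enough that $F(b_i)$ meets $U$, ordered so that the associated subnet still converges to $(f,b)$ and $(g,b)$; choosing $x_{(i,U)} \in F(b_i) \cap U$ produces a subnet converging to $x$. Local compactness of $X$ is used to keep this subnet eventually inside a fixed compact set $C \ni x$, which is what makes locally uniform convergence of $f_i$, $g_i$ on $C$ usable. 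Once this subnet argument is set up cleanly, the rest is the routine limit computation sketched above, and I would not belabour it.
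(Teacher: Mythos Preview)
Your proposal is correct and follows essentially the same route as the paper: reduce to showing that $\sim_F$ is closed (so that the quotient of the compact space $A\times B$ is Hausdorff), then for a net in $\sim_F$ converging to $((f,b),(g,b))$ use lower-semicontinuity of $F$ to produce points $x_i\in F(b_i)$ converging to a given $x\in F(b)$ and conclude $f(x)=g(x)$ via locally uniform convergence. The paper simply asserts the existence of the net $(x_i)$ and the limit identity $f(x)=\lim f_i(x_i)=\lim g_i(x_i)=g(x)$ without spelling out the subnet construction or the role of local compactness, which you make explicit.
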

\begin{proof}
 Since the quotient of a compact space by a closed equivalence relation is again
 compact (cf. \cite[Proposition 10.4.8]{BourbakiGenTop1}), it suffices to 
 show that $\sim_F$ is closed. So let $((f_i, b_i), (g_i, b_i))_{i\in I}$ be 
 a net in $\sim_F$ with limit $((f, b), (g, b)) \in (\mathrm{C}(X, Y)\times B)^2$.
 Pick $x\in F(b)$. Since $F$ is lower-semicontinuous and $b_i \to b$, there is 
 a net $(x_i)_{i\in I}$ such that $x_i\in F(b_i)$ and $x_i\to x$. But since 
 $(f_i)_{i\in I}$ and $(g_i)_{i\in I}$ converge locally uniformly, 
 \begin{align*}
   f(x) 
   = \lim_{i\to\infty} f_i(x_i)
   = \lim_{i\to\infty} g_i(x_i)
   = g(x).
 \end{align*}
 Since $x\in F(b)$ was arbitrary, it follows that $f|_{F(b)} = g|_{F(b)}$ and
 so $\sim_F$ is closed.
\end{proof}

%

\begin{lemma}\label{lem:open}
 Let $(K, B, p; \phi)$ be a bundle of topological dynamical systems such
 that $(K, \phi)$ is equicontinuous. 
 Then the following assertions are true.
 \begin{enumerate}[(i)]
  \item If $p$ is open, the space $\mathrm{E}(K, B, p; \phi)$ is compact.
  \item If each fiber $(K_b, \phi_b)$ is minimal, then $p$ is open.
 \end{enumerate}
\end{lemma}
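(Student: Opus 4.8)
For part~(i), the plan is to realize $\mathrm{E}(K,B,p;\phi)$ as a quotient of the form handled by \cref{quotientcompact}. Since $(K,\phi)$ is equicontinuous, the Ellis semigroup $\mathrm{E}(K,\phi)\subset K^K$ consists of continuous maps and, being a closed equicontinuous family, is compact in $\mathrm{C}(K,K)$ with the topology of uniform convergence (Arzel\`a--Ascoli); set $A\defeq\mathrm{E}(K,\phi)$. Take the set-valued map $F\colon B\to\mathcal P(K)$, $b\mapsto K_b$. By construction, the defining map $\rho\colon \mathrm{E}(K,\phi)\times B\to \mathrm{E}(K,B,p;\phi)$, $(\psi,b)\mapsto \psi_b$, identifies exactly the pairs $(\psi,b)\sim_F(\psi',b)$ with $\psi|_{K_b}=\psi'|_{K_b}$, so that $\mathrm{E}(K,B,p;\phi)$ carries the quotient topology of $A\times B/{\sim_F}$ (this uses that $\rho$ is a surjection and that $\mathrm{E}(K,B,p;\phi)$ is endowed with the final topology of $\rho$). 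Now $p$ open means, by the remark following the semicontinuity definition, that $(K,B,p)$ is lower-semicontinuous, i.e. $F$ is lower-semicontinuous; hence \cref{quotientcompact} applies and $A\times B/{\sim_F}$, and therefore $\mathrm{E}(K,B,p;\phi)$, is compact.

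For part~(ii), the plan is to show $p$ is open by a net argument using equicontinuity and minimality of the fibers. Let $U\subset K$ be open and $b\in p(U)$, say $x\in U\cap K_b$; I must show $p(U)$ is a neighborhood of $b$. Suppose not: then there is a net $b_i\to b$ with $b_i\notin p(U)$, i.e. $K_{b_i}\cap U=\emptyset$ for all $i$. Since $K$ is compact, after passing to a subnet we may pick $y_i\in K_{b_i}$ with $y_i\to y$ for some $y\in K_b$ (here one uses that the fibers of a compact bundle are nonempty and $p$ is continuous, so the limit of points in $K_{b_i}$ lies in $K_b$). By minimality of $(K_b,\phi_b)$, the orbit of $y$ is dense in $K_b$, so there is $n\in\N$ with $\phi^n(y)\in U$; note $\phi^n(y)\in K_b$ since $K_b$ is $\phi$-invariant. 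Then $\phi^n(y_i)\in K_{b_i}$ (fibers are $\phi$-invariant) and $\phi^n(y_i)\to\phi^n(y)\in U$ by continuity of $\phi^n$, so $\phi^n(y_i)\in U$ for $i$ large, contradicting $K_{b_i}\cap U=\emptyset$. Hence $p(U)$ is open.

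The main obstacle I anticipate is the bookkeeping in part~(i): one must check carefully that the final topology on $\mathrm{E}(K,B,p;\phi)$ induced by $\rho$ genuinely coincides with the quotient topology of $A\times B/{\sim_F}$ (rather than merely being comparable to it), and that $\rho$ factors through exactly the equivalence relation $\sim_F$ as opposed to a coarser one; once this identification is in place, \cref{quotientcompact} does the rest. In part~(ii) the only subtlety is the subnet extraction producing a convergent net $(y_i)$ with limit in the correct fiber, which is immediate from compactness of $K$ and continuity of $p$; equicontinuity of $(K,\phi)$ is not actually needed for part~(ii) beyond what is already subsumed, but it is what guarantees in part~(i) that $A$ is compact, so I would keep the hypothesis as stated.
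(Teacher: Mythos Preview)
Your proof of part~(i) is essentially identical to the paper's: both identify $\mathrm{E}(K,B,p;\phi)$ with the quotient $A\times B/{\sim_F}$ for $A=\mathrm{E}(K,\phi)$ and $F(b)=K_b$, use equicontinuity to ensure $A$ is compact in the topology of uniform convergence (equivalently, pointwise convergence, since the two coincide on equicontinuous families), and then invoke \cref{quotientcompact}. Your bookkeeping concern is legitimate but resolves immediately: the final topology induced by a surjection is by definition the quotient topology with respect to the equivalence relation ``same image'', and that relation is exactly $\sim_F$.

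Your proof of part~(ii) is correct but takes a different route from the paper. You argue by contradiction via nets: if $p(U)$ were not a neighborhood of $b$, extract a convergent net $y_i\in K_{b_i}$ with limit $y\in K_b$, use minimality to push $y$ into $U$ by some $\phi^n$, and obtain a contradiction. The paper instead gives a one-line direct argument: for $U\subset K$ open, minimality of every fiber yields
\[
  p^{-1}(p(U)) = \bigcup_{k\ge 0}\phi^{-k}(U),
\]
since $x\in p^{-1}(p(U))$ means $K_{p(x)}\cap U\neq\emptyset$, and by minimality the forward orbit of $x$ in $K_{p(x)}$ meets $U\cap K_{p(x)}$; the right-hand side is open because $\phi$ is continuous, hence $p(U)$ is open. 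The paper's argument is shorter, avoids any appeal to compactness of $K$ or subnet extraction, and makes the role of minimality completely transparent. Your argument, on the other hand, is the natural ``local'' translation of lower-semicontinuity and would adapt more readily to settings where one only knows minimality along a particular net of fibers.
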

\begin{proof}
 Part (i) is a special case of \Cref{quotientcompact} with 
 $A = \mathrm{E}(K, \phi)$ and $F\colon B\to \mathcal{P}(K)$ with $F(b) = K_b$ 
 since the topologies of pointwise and uniform
 convergence coincide on equicontinuous subsets of $\mathrm{C}(K, K)$.
 
 For (ii), assume that each fiber $(K_b, \phi_b)$ is minimal. If 
 $U\subset K_b$ is open in $K_b$, then
 \begin{align*}
  K_b = \bigcup_{k = 0}^\infty \phi_b^{-k}(U)
 \end{align*}
 since $(K_b, \phi_b)$ is minimal (cf. \cite[Proposition 3.3]{EFHN2015}). 
 Therefore, if $U\subset K$ is open, then 
 \begin{align*}
  p^{-1}(p(U)) = \bigcup_{k=0}^\infty \phi^{-k}(U)
 \end{align*}
 is open in $K$ and hence $p(U)$ is open.
\end{proof}

\begin{proposition}\label{fibersminimal}
  Let $(K, \phi)$ be a topological dynamical system with discrete spectrum
  and $q\colon K\to L$ the canonical projection onto the maximal trivial factor. 
  Then each fiber 
  $(K_l, \phi_l)$ is minimal and has discrete spectrum.
\end{proposition}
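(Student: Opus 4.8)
The plan is to exploit the already-established correspondence between $C^*$-subalgebras of $\fix(T_\phi)$ and trivial factors (\cref{prop:factorbundle}, \cref{factororder}) together with the structural description of discrete spectrum via the Ellis group (\cref{prop:topdiscrspec}, \cref{hvn}). Fix $l\in L$ and restrict attention to the fiber $(K_l,\phi_l)$. First I would show $(K_l,\phi_l)$ has discrete spectrum: since $(K,\phi)$ has discrete spectrum, it is equicontinuous and invertible by \cref{prop:topdiscrspec}, and both properties pass to the invariant closed subset $K_l$ with the restricted dynamics; applying \cref{prop:topdiscrspec} again (in the direction \ref{item:ds4} $\Rightarrow$ \ref{item:ds1}) gives that $T_{\phi_l}$ has discrete spectrum on $\mathrm{C}(K_l)$.

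For minimality, the key point is that $L$ is the \emph{maximal} trivial factor, i.e.\ $T_q(\mathrm{C}(L)) = \fix(T_\phi)$ is \emph{all} of the fixed space, not a proper subalgebra. Suppose for contradiction that some fiber $(K_l,\phi_l)$ is not minimal. A system with discrete spectrum is equicontinuous, hence distal, hence its restriction to any closed invariant set decomposes into minimal subsystems; in particular $K_l$ properly contains a closed $\phi_l$-invariant subset $M \subsetneq K_l$. I would then use this to manufacture a nonconstant element of $\fix(T_\phi)$ that does not come from $\mathrm{C}(L)$, contradicting maximality. Concretely: because $(K,\phi)$ is equicontinuous and invertible, the mean ergodic projection $P$ exists and $\fix(T_\phi) = T_q(\mathrm{C}(L))$ consists exactly of the functions constant on each fiber $K_l$ (that is the content of \cref{thm:mergchar}\ref{item:merg1}$\Leftrightarrow$\ref{item:merg2}, since discrete spectrum implies mean ergodicity by \cref{rem:ds}, so every fiber is uniquely ergodic). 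But a uniquely ergodic equicontinuous (hence distal) system is minimal: its unique invariant measure has full support because distality forces every orbit closure to carry an invariant measure, and unique ergodicity then makes all these supports equal to the whole space. This contradicts $M\subsetneq K_l$.

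Let me restructure this into the cleanest order. Step 1: $(K,\phi)$ equicontinuous and invertible (\cref{prop:topdiscrspec}); hence mean ergodic (\cref{rem:ds}). Step 2: by \cref{thm:mergchar}, mean ergodicity of $T_\phi$ with $L$ the maximal trivial factor is equivalent to unique ergodicity of every fiber $(K_l,\phi_l)$. Step 3: each $(K_l,\phi_l)$ is equicontinuous and invertible, being the restriction of $(K,\phi)$ to the closed invariant set $K_l$; hence it is distal. Step 4: a distal uniquely ergodic system is minimal — the unique invariant measure $\mu_l$ has support a closed invariant set, which by distality+unique ergodicity must be all of $K_l$ (any point's orbit closure supports an invariant measure, necessarily $\mu_l$, so $\supp\mu_l$ meets every orbit closure, forcing $\supp\mu_l = K_l$, and minimality follows since any proper closed invariant subset would miss part of $\supp\mu_l$). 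Step 5: discrete spectrum of $(K_l,\phi_l)$ via \cref{prop:topdiscrspec}\ref{item:ds4}$\Rightarrow$\ref{item:ds1}, from equicontinuity and invertibility established in Step 3.

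The main obstacle I anticipate is Step 4 — verifying that distality plus unique ergodicity yields minimality, and making sure the notion of "fiber of the maximal trivial factor" in \cref{thm:mergchar} is being invoked correctly (the theorem's hypothesis that $L$ is \emph{maximal} is exactly what lets me identify $\fix(T_\phi)$ with the fiberwise-constant functions and hence conclude every fiber is uniquely ergodic rather than merely mean ergodic). An alternative route to minimality avoiding the distality argument: directly observe that if $(K_l,\phi_l)$ were not minimal, one could find a proper nonempty closed invariant $M\subsetneq K_l$, use equicontinuity to build an invariant probability measure supported on $M$ and another supported on $K_l\setminus$(a neighborhood of $M$), contradicting unique ergodicity from Step 2. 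Either way the crux is the same, and it is where I'd spend the proof's effort.
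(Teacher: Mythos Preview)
Your proposal is correct and follows essentially the same route as the paper: discrete spectrum passes to fibers by restricting equicontinuity/invertibility (the paper phrases this via $\mathrm{E}(K_l,\phi_l) = \{\psi|_{K_l}\mid \psi\in\mathrm{E}(K,\phi)\}$, which is the same thing through \cref{prop:topdiscrspec}), mean ergodicity together with \cref{thm:mergchar} gives unique ergodicity of each fiber, and then the group/distal structure forces orbit closures to partition $K_l$ so that Krylov--Bogoljubov rules out more than one of them. Your Step~4 phrasing is slightly roundabout---the cleanest version is exactly the paper's: two distinct orbit closures would be disjoint (Ellis group acts transitively on each), hence would support two distinct invariant probability measures, contradicting unique ergodicity.
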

\begin{proof}
  Each fiber $(K_l, \phi_l)$ has discrete 
  spectrum since $\mathrm{E}(K_l, \phi_l) = \{\psi|_{K_l} \mid \psi\in \mathrm{E}(K, \phi)\}$,
  use \cref{prop:topdiscrspec}.
  Moreover, for $x, y\in K_l$ one has $\overline{\orb}(x) = \mathrm{E}(K_l, \phi_l)x$
  and $\overline{\orb}(y) = \mathrm{E}(K_l, \phi_l)y$. Since $\mathrm{E}(K_l, \phi_l)$
  is a group, we conclude that either $\overline{\orb}(x) = \overline{\orb}(y)$
  or $\overline{\orb}(x)\cap \overline{\orb}(y) = \emptyset$. 
  However, by \cref{rem:ds} the system $(K, \phi)$
  is mean ergodic and hence
  $(K_l, \phi_l)$ is uniquely
  ergodic by \cref{thm:mergchar}. We now conclude from the Krylov-Bogoljubov Theorem 
  (cf. \cite[Theorem 10.2]{EFHN2015}) that $K_l$ cannot
  contain two disjoint closed orbits. Consequently, $\overline{\orb(x)} 
  = \overline{\orb(y)}$ for all $x, y\in K_l$ and hence $(K_l, \phi_l)$ is minimal.
\end{proof}

\begin{theorem}\label{bundlechar}
  Let $(K, \phi)$ be a topological dynamical system with discrete spectrum
  and assume that the canonical projection $q\colon K\to L$ onto the maximal
  trivial factor admits a continuous section. 
  Then $(K, L, q; \phi)$ is isomorphic to its Ellis group bundle.
\end{theorem}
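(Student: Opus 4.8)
The plan is to mimic the proof of the classical Halmos--von Neumann theorem (\cref{hvn}) fiberwise, and then check that the pieces glue together continuously thanks to the continuous section. The continuous section $\alpha\colon L \to K$ is crucial: it will provide the ``base point'' $x_0$ needed in the proof of \cref{hvn} simultaneously in every fiber, in a continuous way. So first I would recall from \cref{fibersminimal} that each fiber $(K_l, \phi_l)$ is minimal with discrete spectrum, so by \cref{hvn} the evaluation map
\begin{align*}
  \delta_{\alpha(l)}\colon \mathrm{E}(K_l, \phi_l) \to K_l, \quad \psi \mapsto \psi(\alpha(l))
\end{align*}
is an isomorphism of topological dynamical systems with $\delta_{\alpha(l)}(\phi_l) = \alpha(l)$ (here $\phi_l = \mathrm{id}$ of the Ellis group $\mathrm{E}(K_l,\phi_l)$ plays the role of the neutral element via \cref{prop:topdiscrspec}). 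Moreover, since each $\mathrm{E}(K_l,\phi_l)$ is a compact abelian group (\cref{prop:topdiscrspec}, \cref{rem:ds}), and since by \cref{lem:open} the projection $q$ is open (the fibers are minimal) so that $\mathrm{E}(K,L,q;\phi)$ is compact by the equicontinuity coming from discrete spectrum, the Ellis semigroup bundle $(\mathrm{E}(K,L,q;\phi), L, \pi_L; \alpha)$ is a group rotation bundle in the sense of \cref{sec:groupbundles} — I should verify the bundle-group axioms (continuity of multiplication, inverse, neutral section), which follow from the description of its topology as the final topology induced by $\rho\colon \mathrm{E}(K,\phi)\times L \to \mathrm{E}(K,L,q;\phi)$ together with the fact that $\mathrm{E}(K,\phi)$ is a compact group acting equicontinuously.

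Next I would assemble the fiberwise maps into a single bundle morphism
\begin{align*}
  \Delta\colon \mathrm{E}(K, L, q; \phi) \to K, \quad \psi_l \mapsto \delta_{\alpha(l)}(\psi_l) = \psi_l(\alpha(l)),
\end{align*}
together with $\mathrm{id}_L$ on the base. By construction $\Delta$ maps $(\mathrm{E}(K,L,q;\phi))_l = \mathrm{E}(K_l,\phi_l)$ bijectively onto $K_l$, it is a morphism of dynamical systems fiberwise (hence globally, since $\phi$ and $\phi_\alpha$ respect fibers), and it intertwines the neutral section $\alpha\colon L \to \mathrm{E}(K,L,q;\phi)$ with the given section $\alpha\colon L\to K$. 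So the only thing left is \emph{continuity} of $\Delta$, and then, since its inverse is a fiberwise homeomorphism between compact bundles over the same base, automatic continuity of $\Delta^{-1}$ (a continuous bijection from a compact space to a Hausdorff space is a homeomorphism) — giving the claimed isomorphism of group rotation bundles.

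The main obstacle is exactly this continuity of $\Delta$, and here the definition of the topology on $\mathrm{E}(K,L,q;\phi)$ as the final topology with respect to $\rho\colon (\psi,l)\mapsto \psi|_{K_l}$ does the work: a map out of $\mathrm{E}(K,L,q;\phi)$ is continuous iff its composition with $\rho$ is continuous, so it suffices to check that $(\psi, l) \mapsto \psi(\alpha(l))$ is continuous on $\mathrm{E}(K,\phi)\times L$. This is immediate: $\alpha$ is continuous by hypothesis, and evaluation $\mathrm{E}(K,\phi)\times K \to K$, $(\psi, x)\mapsto \psi(x)$, is jointly continuous because $\mathrm{E}(K,\phi)$ consists of equicontinuous maps (the system has discrete spectrum, so by \cref{prop:topdiscrspec} it is equicontinuous and $\mathrm{E}(K,\phi)$ is a group of equicontinuous transformations, on which pointwise and uniform convergence agree). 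Composing, $(\psi,l)\mapsto \psi(\alpha(l))$ is continuous, hence so is $\Delta$. The remaining points — that $\Delta$ descends correctly through $\sim_F$, i.e. is well defined on the quotient, which it is since $\psi|_{K_l} = \psi'|_{K_l}$ forces $\psi(\alpha(l)) = \psi'(\alpha(l))$ as $\alpha(l)\in K_l$ — and that $\Delta$ is a morphism of \emph{group} bundles (fiberwise a group isomorphism by \cref{hvn}) are routine. I would close by noting that $\Delta$ together with $\mathrm{id}_L$ is therefore an isomorphism of group rotation bundles $(\mathrm{E}(K,L,q;\phi), L, \pi_L;\alpha) \xrightarrow{\ \sim\ } (K, L, q;\phi)$.
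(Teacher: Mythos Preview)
Your proposal is correct and follows essentially the same route as the paper: apply \cref{fibersminimal} and \cref{hvn} fiberwise using the section as base point, assemble into the global map $\psi_l\mapsto\psi_l(s(l))$, and obtain continuity by factoring through $\mathrm{E}(K,\phi)\times L\to K$, $(\psi,l)\mapsto\psi(s(l))$ via the final topology and equicontinuity. The only quibble is notational: you use $\alpha$ both for the given section $L\to K$ and for the rotation section $l\mapsto\phi_l$ of the Ellis bundle, and your parenthetical ``$\phi_l=\mathrm{id}$ of the Ellis group'' is off (the neutral element is $\id_{K_l}$, and it is $\delta_{s(l)}(\id_{K_l})=s(l)$), but this does not affect the argument.
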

\begin{proof}
  Let $s\colon L\to K$ be a section for $q$.
  By \cref{fibersminimal}, every fiber $(K_l, \phi_l)$ is minimal and has discrete spectrum.
  By \cref{hvn} we obtain an isomorphism 
  $\Phi_l\colon (\mathrm{E}(K_l, \phi_l), \phi_l) \to (K_l, \phi_l)$ 
  satisfying $\Phi_l(\id_{K_l}) = s(l)$. This yields a bijection 
  \begin{align*}
    \Phi\colon \mathrm{E}(K, L, q; \phi) \to K, \quad
    \psi_l \mapsto \psi_l(s(l)).
  \end{align*}
  Because $(K, \phi)$ has discrete spectrum, the map
  \begin{align*}
    \mathrm{E}(K, \phi)\times L \to K, \quad (\psi, l) \mapsto \psi(s(l))  
  \end{align*}
  is continuous, hence $\Phi$ is continuous and an
  isomorphism of topological dynamical systems.
\end{proof}

\cref{nosection} shows that there are systems with discrete spectrum which 
are not isomorphic to a group rotation bundle. However, the following is still true.

\begin{corollary}\label{bundlefactor}
  Let $(K, \phi)$ be a topological dynamical system with discrete spectrum. 
  Then $(K, \phi)$ is a factor of a trivial group rotation bundle $(G, a)\times(B, \id_B)$
  where the group rotation $(G, a)$ is minimal and can be taken
  as $(G,a) = (\mathrm{E}(K, \phi),\phi)$.
\end{corollary}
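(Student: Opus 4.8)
The plan is to pass from $(K,\phi)$ to the pullback bundle over the maximal trivial factor and to use \cref{bundlechar} there. By \cref{rem:ds}, a system with discrete spectrum is in particular equicontinuous and invertible (see \cref{prop:topdiscrspec}), and by \cref{fibersminimal} each fiber $(K_l,\phi_l)$ over the maximal trivial factor $q\colon K\to L$ is minimal with discrete spectrum. First I would form the pullback bundle $(q^*K, L', q^*\phi)$ — more precisely, I want to pull back along a map from a nice space onto $L$ so that a continuous section exists. The cleanest choice is to take $(p^*K, K, \pi; p^*\phi)$ as in \cref{pullbacksection}, which always admits the canonical continuous section $k\mapsto(k,k)$; but then the ``base'' is $K$ itself, not a trivial factor, so I instead want to pull back the bundle $(K,L,q;\phi)$ along a surjection $r\colon B\to L$ where $B$ is extremally disconnected (e.g. the Stone space of the Boolean algebra of clopen sets, or any projective cover of $L$), so that by \cref{gleason} the resulting pullback bundle admits a continuous section. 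By \cref{pullbackpreserves}, the pullback $(r^*K, B, \pi_B; r^*\phi)$ is still a bundle of systems with discrete spectrum, and its maximal trivial factor is $(B,\id_B)$ itself — here one should check, using \cref{mergbasespace} and the fact that pullback preserves unique ergodicity of the fibers together with mean ergodicity, that $B$ really is the maximal trivial factor of $r^*K$, or at worst replace $B$ by that factor, which is again a factor of an extremally disconnected space hence still admits a section.

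Next I would apply \cref{bundlechar} to $(r^*K, B, \pi_B; r^*\phi)$: since this bundle has discrete spectrum and its projection onto the maximal trivial factor admits a continuous section, it is isomorphic to its Ellis group bundle $(\mathrm{E}(r^*K, B, \pi_B; r^*\phi), B, \pi_B; \alpha)$. The key observation is that the Ellis semigroup $\mathrm{E}(r^*K, r^*\phi)$ is isomorphic to $\mathrm{E}(K,\phi)$ as a topological group — because $(K,\phi)$ is a factor of $(r^*K, r^*\phi)$ via $\pi_K$, the restriction map $\psi\mapsto\psi|$ gives a continuous surjective homomorphism of Ellis groups, which is injective because $\pi_K$ is surjective; one uses here that for a system with discrete spectrum the Ellis semigroup is a group (\cref{prop:topdiscrspec}\ref{item:ds3}) and restriction to a factor is bijective on it. Thus, writing $G\defeq\mathrm{E}(K,\phi)$ with its rotation $\phi$ (which is minimal by \cref{hvn}, being isomorphic to any fiber), each fiber of the Ellis group bundle is a rotation on (a copy of) $G$, and the bundle embeds into the trivial group rotation bundle $(G\times B, B, \pi_B; \phi_\alpha\times\id_B)$ via $\psi_b\mapsto(\psi,b)$ — this map is a continuous injection into a compact Hausdorff space, hence an embedding, and is equivariant.

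Putting the pieces together: $(r^*K, r^*\phi)$ is (isomorphic to a subsystem of, in particular a factor of — no, I should be careful) — the Ellis group bundle of $r^*K$ is a \emph{sub}bundle of $(G\times B, B, \pi_B)$, so $(r^*K, r^*\phi)$ is isomorphic to a subsystem of $(G,a)\times(B,\id_B)$, and hence $(K,\phi)$, being a factor of $(r^*K, r^*\phi)$ via $\pi_K$, is a factor of a subsystem of $(G,a)\times(B,\id_B)$. Restricting the product to the relevant closed invariant subset (itself of the form ``$(G,a)\times(B',\id_{B'})$'' after replacing $B$ by a suitable subspace — or simply noting that a closed invariant subset of $(G,a)\times(B,\id_B)$ projecting onto $B$ is a bundle, and composing factor maps) yields $(K,\phi)$ as a factor of a trivial group rotation bundle with minimal fiber $(G,a)=(\mathrm{E}(K,\phi),\phi)$. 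The main obstacle I anticipate is the bookkeeping around the base space: ensuring that after pulling back to an extremally disconnected base the maximal trivial factor is still that base (or a section-admitting factor of it), and then matching the ``$B$'' appearing in the statement with the base of the pullback, possibly by enlarging or restricting it; the dynamical content (minimality of $G$, the Ellis-group identification, the embedding into the product) is essentially immediate from \cref{hvn}, \cref{bundlechar} and \cref{prop:topdiscrspec}.
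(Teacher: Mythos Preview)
Your proposal has the right overall shape but contains one genuine error and one unnecessary detour.

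The genuine error is the direction of the map between the Ellis group bundle and the product $G\times B$. You write that ``each fiber of the Ellis group bundle is a rotation on (a copy of) $G$'' and that ``the bundle embeds into the trivial group rotation bundle \ldots\ via $\psi_b\mapsto(\psi,b)$''. This is backwards. The fibers $\mathrm{E}((r^*K)_b,(r^*\phi)_b)\cong\mathrm{E}(K_{r(b)},\phi_{r(b)})$ are \emph{quotients} of $G=\mathrm{E}(K,\phi)$, not copies of it, so the map $\psi_b\mapsto(\psi,b)$ is not well-defined. The map that \emph{is} well-defined is the one from \cref{ellisbundle}, namely $\rho\colon G\times B\to\mathrm{E}(r^*K,B,\pi_B;r^*\phi)$, $(\psi,b)\mapsto\psi_b$, which is a continuous surjection. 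Hence the Ellis group bundle is a \emph{factor} of $(G,a)\times(B,\id_B)$, not a subsystem. Once you reverse the arrow, the difficulty you flag (``factor of a subsystem'') disappears entirely: $r^*K$ is isomorphic to its Ellis group bundle, which is a factor of $G\times B$, and $K$ is a factor of $r^*K$, so you are done.

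The unnecessary detour is your rejection of the pullback $q^*K$ with base $K$. You dismiss it because ``the base is $K$ itself, not a trivial factor'', but $(K,\id_K)$ is a perfectly good trivial system and this is exactly the choice the paper makes: it pulls back along $q\colon K\to L$, so $B=K$. By \cref{pullbacksection} this pullback always admits the diagonal section, and by \cref{mergbasespace} its maximal trivial factor is $K$ itself (the fibers being copies of the $K_l$, hence uniquely ergodic). No extremally disconnected cover or appeal to \cref{gleason} is needed. The paper then verifies $\mathrm{E}(q^*K,q^*\phi)\cong\mathrm{E}(K,\phi)$ by writing down explicit mutually inverse maps, which is the same identification you sketch.
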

\begin{proof}
  Let $(K, \phi)$ be a topological dynamical system with discrete spectrum
   and $q\colon K\to L$ the projection onto its maximal trivial factor.
   As noted in \cref{pullbackpreserves}, the 
   associated pullback system
   $(q^*K, K, \pi_K, q^*\phi)$ also has discrete spectrum. Moreover, its fibers are
   uniquely ergodic and so \cref{mergbasespace} shows that
   its maximal trivial factor is homeomorphic to $K$. This, combined with 
   \cref{pullbacksection} yields that the canonical projection onto
   its maximal trivial factor admits a continuous section $s\colon K\to q^*K$.
   By \cref{bundlechar} we obtain that the bundle 
   $(q^*K, K, \pi_K; q^*\phi)$ is isomorphic to its Ellis group bundle which is, 
   by construction, a factor of the trivial group rotation bundle 
   $(\mathrm{E}(q^*K, q^*\phi), q^*\phi)\times(K, \id_K)$. We now consider the 
   following maps:
   \begin{align*}
     &Q\colon \mathrm{E}(K, \phi) \to \mathrm{E}(q^*K, q^*\phi), \quad
     \psi \mapsto q^*\psi, \\
     &P\colon \mathrm{E}(q^*K, q^*\phi) \to \mathrm{E}(K, \phi), \quad
     \tilde{\psi} \mapsto p_2\circ\tilde{\psi}\circ s
   \end{align*}
   where $p_2\colon q^*K\to K$ denotes the projection onto the second component.
   Both $Q$ and $P$ are continuous and satisfy 
   $Q(\phi^k) = (q^*\phi)^k$ and $P((q^*\phi)^k) = \phi^k$ for all 
   $k\in \N$. Since 
   $\phi$ and $q^*\phi$ generate their respective Ellis groups, $P$ and $Q$ are mutually inverse. Hence, 
   \begin{align*}
     (\mathrm{E}(q^*K, q^*\phi), q^*\phi)\times(K, \id_K) 
     \cong (\mathrm{E}(K, \phi), \phi)\times(K, \id_K).
   \end{align*}
\end{proof}

\begin{remark}
  The group rotation $(\mathrm{E}(K, \phi), \phi)$ is the smallest group
  rotation that can be taken as $(G, a)$ in \cref{bundlefactor} in the sense that 
  any such group rotation $(G, a)$ admits an epimorphism $\eta\colon 
  (G, a) \to (\mathrm{E}(K, \phi), \phi)$. This is true because 
  a factor map $\theta\colon (G, a)\times(B, \id_B)\to (K, \phi)$ induces a continuous, surjective
  group homomorphism
  \begin{align*}
     \mathrm{E}(\theta)\colon \mathrm{E}((G, a)\times(B, \id_B))\to \mathrm{E}(K, \phi)
  \end{align*}
  satisfying $\mathrm{E}(\theta)(a\times\id_B) = \phi$ and 
  \begin{align*}
    (\mathrm{E}((G, a)\times(B, \id_B)), a\times\id_B)\cong (\mathrm{E}(G, a), \phi_a) \cong (G, a).
  \end{align*}
\end{remark}

\begin{remark}\label{rem:factor}
  If $(K, \phi)$ has discrete spectrum and the canonical projection $q\colon K\to L$ 
  admits a continuous section, the system is already isomorphic to its Ellis group 
  bundle and hence, by definition of the latter, a factor of the system 
  $(\mathrm{E}(K, \phi), \phi)\times(L,\id_{L})$. In this case, one can take
  $B = L$ in \cref{bundlefactor}.
\end{remark}

%


\subsection{The measure-preserving case}

Since the problem of finding continuous sections can be solved for topological models of 
measure spaces as we will see below, we obtain a
better result for measure-preserving systems.
This is our generalization of the Halmos-von Neumann theorem to the non-ergodic case.
It is proved by constructing a topological model and then applying \Cref{bundlechar}.
For background information on topological models, see
\cite[Chapter 12]{EFHN2015}.

\begin{theorem}\label{measurablebundle}
  Let $(\mathrm{X}, \phi)$ be a measure-preserving system with discrete spectrum. Then 
  $(\mathrm{X}, \phi)$ is Markov-isomorphic to the rotation on a compact group rotation
  bundle. More precisely, there are a compact group rotation bundle 
  $(\mathcal{G}, B, p; \phi_\alpha)$ with minimal fibers and a $\phi_\alpha$-invariant measure
  $\mu_{\mathcal{G}}$ on $\mathcal{G}$ such that 
  $(\mathrm{X}, \phi)$ and 
  $(\mathcal{G}, \mu_{\mathcal{G}}; \phi_\alpha)$ are Markov-isomorphic. Moreover,
  this group rotation bundle can be chosen such that the canonical map 
  $j\colon\Kro_{\mathrm{C}(\mathcal{G})}(T_{\phi_\alpha}) \to 
  \Kro_{\mathrm{L}^\infty(\mathcal{G}, \mu_{\mathcal{G}})}(T_{\phi_\alpha})$
  of Kronecker spaces is an isomorphism.
\end{theorem}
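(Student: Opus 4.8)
The plan is to pass to a suitable topological model, apply the structure theory already developed, and then transport everything back to the measure-theoretic side. First I would invoke the existence of a topological model for $(\mathrm X, \phi)$ that is well-adapted to the discrete-spectrum situation: using the theory of topological models (cf.\ \cite[Chapter 12]{EFHN2015}), there is a topological dynamical system $(K, \psi)$ together with a $\psi$-invariant Baire/regular Borel probability measure $\nu$ such that $(K, \nu; \psi)$ is Markov-isomorphic to $(\mathrm X, \phi)$, and moreover such that $\fix_{\mathrm L^\infty(\mathrm X)}(T_\phi)$ is modeled \emph{continuously}, i.e.\ the $\mathrm C^*$-algebra $\fix(T_\psi)\subset\mathrm C(K)$ maps onto a weak$^*$-dense copy of $\fix_{\mathrm L^\infty}(T_\phi)$ under the identification. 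The point of choosing the model carefully is the second sentence about Kronecker spaces: I would use a model (for instance the one built from the Kronecker algebra / the $\mathrm C^*$-algebra generated by the unimodular eigenfunctions, which is separable and invariant) so that the canonical map $j$ on Kronecker spaces is an isomorphism; since $T_\phi$ has discrete spectrum, $\Kro(T_\phi) = \mathrm L^1(\mathrm X)$, so what this really says is that the model $(K,\psi)$ is chosen with $\mathrm C(K)$ generated by eigenfunctions, hence $(K,\psi)$ itself has discrete spectrum by \cref{prop:topdiscrspec}.

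Next, I would arrange that the topological restriction needed for \cref{bundlechar} is satisfied, and \emph{this is the crux of the argument}. By \cref{factororder}, $(K,\psi)$ has a maximal trivial factor $(L_\psi, \id)$ with factor map $q\colon K\to L_\psi$, and $\mathrm C(L_\psi)\cong\fix(T_\psi)$. The obstacle from the purely topological \cref{bundlechar} was that $q$ need not admit a continuous section (recall \cref{nosecexample}, \cref{nosection}). Here is where the measure-theoretic input pays off: as recalled in the Preliminaries, $\fix_{\mathrm L^\infty(\mathrm X)}(T_\phi)$ is a commutative von Neumann algebra, hence isomorphic to $\mathrm C(L)$ for an \emph{extremally disconnected} (in fact hyperstonean) compact space $L$. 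Using a Stone-space / hyperstonean model for the base, I would replace $(K,\psi)$ by an isomorphic model whose maximal trivial factor is exactly this extremally disconnected $L$ (this is consistent because the base $\mathrm C^*$-algebra is an isomorphism invariant of the Markov-isomorphism class, being $\fix_{\mathrm L^\infty}(T_\phi)$). Then \cref{gleason} applies: since $L$ is extremally disconnected, the bundle $(K, L, q; \psi)$ admits a continuous section. Now \cref{bundlechar} gives that $(K, L, q;\psi)$ is isomorphic, as a bundle of topological dynamical systems, to its Ellis group bundle $(\mathcal G, L, p;\phi_\alpha)$, which is a group rotation bundle with a continuous section $\alpha$; and by \cref{fibersminimal} each fiber $(K_l,\psi_l)$, hence each $(\mathcal G_l, \phi_{\alpha(l)})$, is a minimal group rotation. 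Set $\mu_{\mathcal G}$ to be the image of $\nu$ under this isomorphism; it is a $\phi_\alpha$-invariant regular Borel probability measure on $\mathcal G$.

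Finally I would assemble the Markov isomorphism. The topological isomorphism $(K,\psi)\cong(\mathcal G,\phi_\alpha)$ carrying $\nu$ to $\mu_{\mathcal G}$ induces a Markov isomorphism $(K,\nu;\psi)\cong(\mathcal G,\mu_{\mathcal G};\phi_\alpha)$ of the associated measure-preserving systems, and composing with the Markov isomorphism $(\mathrm X,\phi)\cong(K,\nu;\psi)$ from the model gives the desired $(\mathrm X,\phi)\cong(\mathcal G,\mu_{\mathcal G};\phi_\alpha)$. For the last sentence: since $(\mathcal G,\phi_\alpha)$ was built from the eigenfunction model, $\mathrm C(\mathcal G)$ is generated by unimodular eigenfunctions, so $\Kro_{\mathrm C(\mathcal G)}(T_{\phi_\alpha}) = \mathrm C(\mathcal G)$, which sits densely in $\mathrm L^\infty(\mathcal G,\mu_{\mathcal G}) = \Kro_{\mathrm L^\infty}(T_{\phi_\alpha})$; one checks the canonical map $j$ is injective because eigenfunctions for distinct eigenvalues stay linearly independent in $\mathrm L^\infty$ (using $\inf_n\|T^n x\|>0$ from \cref{thm:operatordiscrspec}) and has dense range, and since both sides are the Kronecker spaces of operators with discrete spectrum, density plus the group structure of the Jacobs semigroup (\cref{rem:ds}) upgrade this to an isomorphism. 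The one genuinely delicate point, beyond bookkeeping, is the existence of a single topological model that \emph{simultaneously} realizes the hyperstonean base (so that \cref{gleason} is available) and keeps $\mathrm C(K)$ eigenfunction-generated (so that $j$ is an isomorphism); I would handle this by taking the model generated by the $\mathrm C^*$-algebra generated by $\fix_{\mathrm L^\infty}(T_\phi)$ together with a countable (or full) set of unimodular eigenfunctions, and checking it has the maximal trivial factor claimed — this is where most of the technical work lies.
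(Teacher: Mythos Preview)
Your approach is essentially the paper's: take the Gelfand model of the $\mathrm{L}^\infty$-Kronecker algebra, observe that its maximal trivial factor is the Gelfand space of $\fix_{\mathrm{L}^\infty(\X)}(T_\phi)$ and hence extremally disconnected, apply \cref{gleason} to obtain a continuous section, then invoke \cref{bundlechar} and push the measure forward. The only point where you hedge unnecessarily is the ``delicate point'' at the end: there is nothing to arrange, because the single algebra $\mathcal{A} \defeq \operatorname{cl}_{\mathrm{L}^\infty}\bigcup_{|\lambda|=1}\ker_{\mathrm{L}^\infty}(\lambda I - T_\phi)$ already has both properties simultaneously --- it is eigenfunction-generated by definition, and since $\fix_{\mathrm{L}^\infty(\X)}(T_\phi)$ is the $\lambda=1$ eigenspace it sits inside $\mathcal{A}$, so $\fix(T_\psi) \cong \fix_{\mathrm{L}^\infty(\X)}(T_\phi)$ on the nose and the base is automatically hyperstonean.
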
\begin{proof}
  We define
  \begin{align*}
   \mathcal{A} \defeq \operatorname{cl}_{\mathrm{L}^\infty}\bigcup_{|\lambda| = 1} 
   \ker_{\mathrm{L}^\infty}(\lambda I - T_\phi)
  \end{align*}
  and note that this is a $T_\phi$-invariant, unital $\mathrm{C}^*$-subalgebra 
  of $\mathrm{L}^\infty(\X)$ which is dense in $\mathrm{L}^1(\X)$ by \cite[Lemma 17.3]{EFHN2015}
  since $(\X, \phi)$ has discrete spectrum.
  The Gelfand representation 
  theorem (cf. \cite[Theorem I.4.4]{Takesaki1979}) yields that there is a compact space $K$ and 
  a $\mathrm{C}^*$-isomorphism $S\colon \mathrm{C}(K) \to \mathcal{A}$. 
  The Riesz-Markov-Kakutani representation theorem shows that there is a unique
  Borel probability measure $\mu_K$ on $K$ such that 
  \begin{align*}
     \int_K f\dmu_K = \int_\X S f \dmu_\X \quad \text{for all } f\in\mathrm{C}(K).
  \end{align*}
  Moreover, $T \defeq S^{-1}\circ T_\phi\circ S\colon \mathrm{C}(K)\to\mathrm{C}(K)$ 
  defines a $\mathrm{C}^*$-homomorphism and so (cf. \cite[Theorem 4.13]{EFHN2015}) there 
  is a continuous map 
  $\psi\colon K\to K$ such that $T = T_\psi$. The operator $S$ is, by construction,
  an $\mathrm{L}^1$-isometry and $S |f| = |S f|$ for all $f\in\mathrm{C}(K)$
  by \cite[Theorem 7.23]{EFHN2015}. Since $\mathcal{A}$ is dense in 
  $\mathrm{L}^1(\X)$, we conclude that $S$ extends to a bi-Markov lattice 
  homomorphism $S\colon\mathrm{L}^1(K, \mu_K) \to \mathrm{L}^1(\X)$.

  The (topological) system $(K, \psi)$ now has discrete spectrum by construction.
  Let $L_\psi$ denote the maximal trivial factor of $(K, \psi)$. Then
  $\mathrm{C}(L_\psi) \cong \fix(T_\psi) 
\cong \fix_{\mathrm{L}^\infty(\X)}(T_\phi)$ and so 
  $L_\psi$ is extremally disconnected as noted in \Cref{sec:preliminaries}. From \Cref{gleason} we therefore
  conclude that the canonical projection $q\colon K\to L_\psi$ has a continuous 
  section. \Cref{bundlechar} shows that there is an isomorphism
  $\theta\colon (K, \psi)\to (\mathcal{G}, \alpha)$ where $(\mathcal{G}, \alpha)$ is the rotation on 
  some compact group rotation bundle with minimal fibers.
  Equipping $(\mathcal{G}, \alpha)$ with the push-forward measure $\mu_\mathcal{G}\defeq\theta_*\mu_K$,
  we obtain that the system $(\X, \phi)$ is isomorphic to the system 
  $(\mathcal{G}, \mu_\mathcal{G}; \alpha)$.
\end{proof}


%

\begin{corollary}\label{markovfactor}
  Let $(\mathrm{X}, \phi)$ be a measure-preserving dynamical system 
  with discrete spectrum and $(L, \nu; \id_L)$ a topological model for 
  $\fix_{\mrL^\infty(\X)}(T_\phi)$. Then $(\mathrm{X}, \phi)$ is a 
  Markov factor of the trivial group rotation bundle 
  $(\mathrm{J}(T_\phi), \mathrm{m}; T_\phi)\times (L, \nu; \id_L)$.
\end{corollary}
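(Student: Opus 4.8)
My plan is to deduce the corollary from the structure theorem \cref{measurablebundle} together with the topological factorization recorded in \cref{rem:factor}, translating between the measure-theoretic and topological pictures by means of \cref{prop:topdiscrspec}. To begin, \cref{measurablebundle} gives a Markov isomorphism $R\colon \mathrm{L}^1(\mathcal G,\mu_{\mathcal G})\to\mathrm{L}^1(\X)$ with $T_\phi R=R\,T_{\phi_\alpha}$, where $(\mathcal G,B,p;\phi_\alpha)$ is a compact group rotation bundle whose fibers are minimal. By \cref{minimalchar} these fibers are uniquely ergodic, so \cref{mergbasespace} allows me to assume that $B=L'$ is the maximal trivial factor of $(\mathcal G,\phi_\alpha)$ and that $p=q$ is the canonical projection; put $\nu'\defeq q_*\mu_{\mathcal G}$, so that, by \cref{disintegration}, $\mu_{\mathcal G}=\int_{L'}\mathrm m_l\,d\nu'$ with $\mathrm m_l$ the Haar measure on the fiber group $\mathcal G_l$. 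Since $(\mathcal G,\phi_\alpha)$ has discrete spectrum and $q$ admits the continuous section $\alpha$ built into the bundle, \cref{rem:factor} provides a factor map $\theta\colon(\mathrm E(\mathcal G,\phi_\alpha)\times L',\phi_\alpha\times\id_{L'})\to(\mathcal G,\phi_\alpha)$ of topological dynamical systems which is, moreover, a bundle morphism over $\id_{L'}$, so that $q\circ\theta=p_{L'}$. Here $\mathrm E(\mathcal G,\phi_\alpha)$ is a compact abelian group by \cref{prop:topdiscrspec} and \cref{rem:ds}, and I equip it with its Haar measure $\mathrm m$.

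Next I would show that $\theta$ transports $\mathrm m\otimes\nu'$ to $\mu_{\mathcal G}$. The measure $\theta_*(\mathrm m\otimes\nu')$ is $\phi_\alpha$-invariant, being the push-forward of the $(\phi_\alpha\times\id_{L'})$-invariant measure $\mathrm m\otimes\nu'$ under a factor map, and it projects onto $\nu'=(p_{L'})_*(\mathrm m\otimes\nu')$ on $L'$ because $q\circ\theta=p_{L'}$. Now $(\mathcal G,\phi_\alpha)$ is mean ergodic, its fibers being uniquely ergodic (\cref{thm:mergchar}), so by \cref{thm:mergchar}\ref{item:merg4} the map $\mu\mapsto q_*\mu$ is injective on $\mathrm M_{\phi_\alpha}(\mathcal G)$; hence $\theta_*(\mathrm m\otimes\nu')=\mu_{\mathcal G}$, both being $\phi_\alpha$-invariant with projection $\nu'$. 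It follows that the Koopman operator $T_\theta\colon\mathrm C(\mathcal G)\to\mathrm C(\mathrm E(\mathcal G,\phi_\alpha)\times L')$ extends to a bi-Markov lattice homomorphism $\mathrm{L}^1(\mathcal G,\mu_{\mathcal G})\to\mathrm{L}^1(\mathrm E(\mathcal G,\phi_\alpha)\times L',\mathrm m\otimes\nu')$ intertwining the Koopman operators, which exhibits $(\mathcal G,\mu_{\mathcal G};\phi_\alpha)$, and hence, via $R$, also $(\X,\phi)$, as a Markov factor of $(\mathrm E(\mathcal G,\phi_\alpha),\mathrm m;\phi_\alpha)\times(L',\nu';\id_{L'})$.

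It then remains to identify these two factors with the ones in the statement. On the group side, conjugation by $R$ is continuous for the weak operator topology, hence maps the compact group $\mathrm J(T_\phi)$ homeomorphically and isomorphically onto $\mathrm J(T_{\phi_\alpha})$, taking the rotation $T_\phi$ to $T_{\phi_\alpha}$; combined with the isomorphism $\mathrm J(T_{\phi_\alpha})\cong\mathrm E(\mathcal G,\phi_\alpha)$ of \cref{prop:topdiscrspec} and the fact that isomorphisms of compact groups carry Haar measure to Haar measure, this yields a Markov isomorphism $(\mathrm E(\mathcal G,\phi_\alpha),\mathrm m;\phi_\alpha)\cong(\mathrm J(T_\phi),\mathrm m;T_\phi)$. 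On the base side, $R$ restricts to a trace-preserving isomorphism of $\mathrm C^*$-algebras $\fix_{\mathrm L^\infty(\mathcal G,\mu_{\mathcal G})}(T_{\phi_\alpha})\to\fix_{\mathrm L^\infty(\X)}(T_\phi)$; since, by \cref{disintegration}, every $\phi_\alpha$-invariant element of $\mathrm L^\infty(\mathcal G,\mu_{\mathcal G})$ is constant on almost every (minimal, hence ergodic) fiber and is thus a pull-back along $q$, and since $q_*\mu_{\mathcal G}=\nu'$, the trivial system $(L',\nu';\id_{L'})$ is a topological model of $\fix_{\mathrm L^\infty(\X)}(T_\phi)$. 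As any two topological models of the same measure-preserving system are Markov isomorphic, $(L',\nu';\id_{L'})\cong(L,\nu;\id_L)$. Since being a Markov factor is transitive and invariant under Markov isomorphisms, $(\X,\phi)$ is a Markov factor of $(\mathrm J(T_\phi),\mathrm m;T_\phi)\times(L,\nu;\id_L)$, as required.

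I expect the main obstacle to be the measure identification $\theta_*(\mathrm m\otimes\nu')=\mu_{\mathcal G}$. The route sketched above sidesteps an explicit fiber-wise computation (which would amount to noting that on the fiber over $l$ the map $\theta$ factors as the restriction homomorphism $\mathrm E(\mathcal G,\phi_\alpha)\to\mathrm E(\mathcal G_l,\phi_{\alpha(l)})$ followed by the affine bijection $\psi_l\mapsto\psi_l(\alpha(l))$, and therefore pushes $\mathrm m$ to $\mathrm m_l$) by invoking the uniqueness of invariant measures over a fixed base measure supplied by \cref{thm:mergchar}\ref{item:merg4}. The other point requiring care is the verification that $(L',\nu';\id_{L'})$ is genuinely a topological model for $\fix_{\mathrm L^\infty(\X)}(T_\phi)$, which is what licenses replacing it by the given model $(L,\nu;\id_L)$.
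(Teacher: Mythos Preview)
Your proof is correct and follows precisely the route the paper indicates---combining \cref{measurablebundle} with \cref{rem:factor}---only you spell out the details (the measure identification via \cref{thm:mergchar}\ref{item:merg4}, the identification $\mathrm{J}(T_\phi)\cong\mathrm{E}(\mathcal G,\phi_\alpha)$ through \cref{prop:topdiscrspec} and conjugation by $R$, and the matching of the base models) that the paper leaves implicit in its one-line proof. The one step that still deserves a word of justification is why $\mathrm{J}(T_{\phi_\alpha})$ computed on $\mathrm{L}^1(\mathcal G,\mu_{\mathcal G})$ agrees with the one on $\mathrm{C}(\mathcal G)$, to which \cref{prop:topdiscrspec} applies; this follows from the additional property in \cref{measurablebundle} that the canonical map of Kronecker spaces is an isomorphism.
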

\begin{proof}
  This follows from \cref{measurablebundle} and \cref{rem:factor}.
\end{proof}

\begin{remark}
  We can also interpret the Halmos-von Neumann theorem in the following
  way: If $(\mathrm{X}, \phi)$ is an ergodic, measure-preserving system with 
  discrete spectrum, there is a compact, ergodic group rotation $(G, a)$ and 
  a Markov isomorphism $S\colon\mathrm{L}^1(\X) \to \mrL^1(G, \mathrm{m})$
  such that the diagram 
  \begin{align*}
    \xymatrix{
      \mrL^1(\X) \ar[d]^-{T_\phi} \ar[r]^-S & \mrL^1(G, \mathrm{m}) \ar[d]^-{T_{\phi_a}} \\
      \mrL^1(\X)  \ar[r]^-S & \mrL^1(G, \mathrm{m}) 
    }
  \end{align*}
  commutes, i.e., $T_\phi$ acts like an ergodic rotation on scalar-valued functions. If $(\mathrm{X}, \phi)$
  is not ergodic, we can interpret \cref{markovfactor} similarly:
  There is a compact, ergodic group rotation 
  $(G, a)$, a compact probability space $(L, \nu)$ and a Markov embedding $S\colon 
  \mrL^1(\X) \to \mrL^1(G\times L, \mathrm{m}\times\nu)$ such that $T_{\phi_a\times \id_L}S = 
  ST_\phi$.
  The rotation $\phi_a$ induces a Koopman operator $T_{\phi_a}$ on 
  the vector-valued functions in 
  $\mathrm{L}^1(G, \mathrm{m}; \mrL^1(L, \nu))$. With the $\pi$-tensor product 
  (see \cite[Th\'{e}or\`{e}me 2]{Grothendieck1952}), we obtain
  \begin{align*}
    \mathrm{L}^1(G, \mathrm{m}; \mrL^1(L, \nu))
    \cong \mathrm{L}^1(G, \mathrm{m})\otimes\mathrm{L}^1(L, \nu) 
    \cong \mathrm{L}^1(G\times L, \mathrm{m}\times\nu).
  \end{align*}
  Now, the diagram
  \begin{align*}
    \xymatrix{
      \mrL^1(\X;\C)\, \ar[d]^-{T_\phi} \ar@{^(->}[r]^-S & \mrL^1(G\times L, \mathrm{m}\times\nu) \ar[d]^-{T_{\phi_a\times 
\id_L}} \ar[r]^-\cong & \mrL^1(G, \mathrm{m}; \mrL^1(L, \nu)) \ar[d]^-{T_{\phi_a}}  \\
      \mrL^1(\X;\C)\, \ar@{^(->}[r]^-S & \mrL^1(G\times L, \mathrm{m}\times\nu) \ar[r]^-\cong &  \mrL^1(G, \mathrm{m}; 
\mrL^1(L, \nu)) 
    }
  \end{align*}
  also commutes, i.e., $T_\phi$ acts like an ergodic rotation on vector-valued functions. 
  We can 
  interpret the topological Halmos-von Neumann theorem \cref{hvn} and \cref{bundlefactor} analogously.
\end{remark}

\section{Realization and Uniqueness}\label{sec:hvn}


The topological Halmos-von Neumann theorem shows that every minimal dynamical system with discrete spectrum
is isomorphic to a minimal group rotation $(G,a)$. Therefore, minimal group rotations
can be seen as the canonical representatives of minimal systems with discrete spectrum. Moreover, the Pontryagin 
duality theorem
shows that $(G, a)$ and $(G^{**}, \delta_a)$ are isomorphic which has two consequences: On the one hand, 
$G^*\cong G^*(a)$ via $\chi \mapsto \chi(a)$ and $G^*(a) = 
\sigma_{\mathrm{p}}(T_{\phi_a})$
where $T_{\phi_a}$ denotes the Koopman operator of $\phi_a$,
see \cite[Propositions 14.22 and 14.24]{EFHN2015}. In particular, $\sigma_{\mathrm{p}}(T_{\phi_a})$ is a subgroup of 
$\T$ and for the canonical inclusion $\iota\colon \sigma_{\mathrm{p}}(T_{\phi_a})\hookrightarrow \T$
\begin{align*}
  (G, a) \cong (G^*(a)^*, \iota) = (\sigma_\mathrm{p}(T_{\phi_a})^*, \iota)
\end{align*}
if $\sigma_{\mathrm{p}}(T_\phi)$ is endowed with the discrete topology.
Therefore, the point spectrum $\sigma_\mathrm{p}(T_{\phi_a})$ is a complete isomorphism invariant
for the minimal group rotation $(G, a)$. Combined with the Halmos-von Neumann theorem, this shows that 
the point spectrum $\sigma_{\mathrm{p}}(T_\phi)$ is a complete isomorphism invariant for \emph{all} minimal
topological dynamical systems $(K, \phi)$ with discrete spectrum. On the other hand, the Pontryagin duality theorem
also implies that every subgroup of $\T$ can be realized as $\sigma_{\mathrm{p}}(T_{\phi_a})$ for some 
group rotation $(G, a)$. This completes the picture, showing that minimal systems with discrete
spectrum are, up to isomorphism, in one-to-one correspondence with subgroups of $\T$.

In order to generalize these results to the non-minimal setting, we need to adapt the 
Pontryagin duality theorem to group rotation bundles using the preparations from 
\Cref{sec:groupbundles}. We start with the necessary
terminology.

\begin{construction}[Dual bundles]\label{dualbundles}
  If $(\mathcal{G}, L, q; \alpha)$ is a compact group rotation bundle with minimal
  fibers and discrete spectrum, the map 
  \begin{align*}
    \rho\colon \mathrm{E}(\mathcal{G}, \phi_\alpha)\times L \to \mathcal{G}, \quad
    (\psi, l) \mapsto \psi(e_l)
  \end{align*}
  yields a surjective morphism $(\rho, \id_L)$ of group bundles which induces, 
  by \cref{dualprops}, an embedding $\rho^*\colon \mathcal{G}^*\hookrightarrow 
  \mathrm{E}(\mathcal{G}, \phi_\alpha)^*\times L$. Since $\mathrm{E}(\mathcal{G}, \phi_\alpha)$
  is compact, its dual group is discrete and so we also have the embedding
  \begin{align*}
    j\colon \mathrm{E}(\mathcal{G}, \phi_\alpha)^*\times L \to \T\times L, \quad 
    (\chi, l) \mapsto (\chi(\phi_\alpha), l)
  \end{align*}
  where $\T$ carries the discrete topology.
  The composition $\iota\colon \mathcal{G}^* \to \T\times L$ 
  of these two maps is hence a subtrivialization of $\mathcal{G}^*$ and we call 
  $(\mathcal{G}^*, L, \pi_L; \iota)$ the dual bundle of $(\mathcal{G}, L, q; \alpha)$. 
  (Note that $\mathcal{G}^*$ is, in general, neither locally compact nor 
  Hausdorff.)
  If, conversely, $(\mathcal{G}, L, q; \iota)$ is a group bundle with a $\T$-subtrivialization
  $\iota$,
  we set $\alpha_{\iota}\colon L\to \mathcal{G}^*$, $l\mapsto \iota_l$ and 
  call $(\mathcal{G}^*, L, \pi_L; \alpha_\iota)$ the dual bundle of 
  $(\mathcal{G}, L, q; \iota)$. We say that two group bundles with $\T$-subtrivializations
  $(\mathcal{G}, L, q; \iota)$ and $(\mathcal{G}', L', q'; \iota')$ 
  are isomorphic if their respective subtrivializations are, i.e., if there 
  is an isomorphism $(\Theta, \theta)\colon (\mathcal{G}, L, q) \to (\mathcal{G}', L', q')$
  such that the diagram 
  \begin{align*}
    \xymatrix@C=2.5cm{
      \T\times L \ar[r]^-{(z, l) \mapsto (z, \theta(l))} & \T\times L' \\
      \mathcal{G}' \ar[u]^-{\iota} \ar[r]^-{\Theta} & \mathcal{G}' \ar[u]^-{\iota'}
    }
  \end{align*}
  commutes.
\end{construction}

\begin{definition}\label{psb}
  Let $(K, \phi)$ be a topological dynamical system and $q\colon\ab K\to L$
  the projection onto its maximal trivial factor $L$. Then we define
  \begin{align*}
    \Sigma_\mathrm{p}(K, \phi) 
    \defeq \bigcup_{l\in L} \sigma_{\mathrm{p}}(T_{\phi_l})\times\{l\}
    \subset \C\times L.
  \end{align*}
  We denote the projection onto the second component by $\pi_L$ and equip
  $\Sigma_\mathrm{p}(K,\ab \phi)$ with the subspace topology induced by 
  $\C\times L$ if $\C$ carries the discrete topology. The bundle 
  $(\Sigma_\mathrm{p}(K, \phi), L, \pi_L)$ is then called the \emph{point spectrum
  bundle} of $(K, \phi)$. We say that the point spectrum bundles of two 
  systems are \emph{isomorphic} if there is an isomorphism of their canonical 
  subtrivializations.
  We say that the point spectrum bundles
  of two systems $(K, \phi)$ and $(M, \psi)$ are \emph{isomorphic} if there is
  a homeomorphism $\eta\colon L_\phi \to L_\psi$ such that
  \begin{align*}
    H\colon \Sigma_{\mathrm{p}}(K, \phi) \to \Sigma_{\mathrm{p}}(M, \psi), \quad
    (z, l) \mapsto (z, \eta(l))
  \end{align*}
  is a (well-defined) homeomorphism and call $(H, \eta)$ an isomorphism of the point 
  spectrum bundles.
\end{definition}

\begin{remark}\label{rem:psb}
  Let $(\mathcal{G}, L, \pi_L; \iota)$ be a group bundle with a $\T$-subtrivialization 
  $\iota\colon \mathcal{G}\to\T\times L$.
  Then $\iota$ induces an isomorphism 
  \begin{align*}
    (\mathcal{G}, L, \pi_L; \iota) \cong (\iota(\mathcal{G}), L, \pi_L; \id_{\iota(\mathcal{G})})
  \end{align*}
  and hence 
  \begin{align*}
    (\mathcal{G}, L, \pi_L; \iota)^* \cong (\iota(\mathcal{G})^*, L, \pi_L; (\id_{\iota(\mathcal{G}_l)})_{l\in L}).
  \end{align*}
  In particular, $\mathcal{G}$ and hence its dual are completely determined by 
  $\iota(\mathcal{G})$. Now, if $(\mathcal{G}, L, \pi_L;\iota)$ is the dual of a compact group 
  rotation bundle
  $(\mathcal{H}, L, p; \alpha)$ with minimal fibers and discrete spectrum, it 
  follows from the introduction to this section that 
  \begin{align*}
    \left(\iota(\mathcal{G})^*, L, \pi_L, (\id_{\iota(\mathcal{G}_l)})_{l\in L}\right)
    = \left(\Sigma_\mathrm{p}(\mathcal{H}, \phi_\alpha)^*, L, \pi_L, (\id_{\Sigma_{\mathrm{p}, l}(\mathcal{H}, 
\phi_\alpha)} )_{l\in L}\right).
  \end{align*}
  So we see that the dual bundle of a group rotation bundle with discrete spectrum and 
  minimal fibers is completely determined by 
  its point spectrum bundle.
\end{remark}

\begin{lemma}\label{lem:lsc}
  Let $(K,\phi)$ be a topological dynamical system with discrete spectrum.
  Then its point spectrum bundle is lower-semicontinuous.
\end{lemma}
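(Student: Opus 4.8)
The plan is to verify that the bundle projection $\pi_L\colon\Sigma_\mathrm{p}(K,\phi)\to L$ is an open map, which is exactly lower-semicontinuity. Write $q\colon K\to L$ for the projection onto the maximal trivial factor. Since $\C$ carries the discrete topology, the sets $\Sigma_\mathrm{p}(K,\phi)\cap(\{\lambda\}\times W)$ with $\lambda\in\T$ and $W\subset L$ open form a basis for $\Sigma_\mathrm{p}(K,\phi)$, and the image of such a set under $\pi_L$ is $W\cap L_\lambda$ where
\begin{align*}
  L_\lambda \defeq \{\, l\in L \mid \lambda\in\sigma_\mathrm{p}(T_{\phi_l}) \,\}.
\end{align*}
So it suffices to show that $L_\lambda$ is open in $L$ for every $\lambda\in\T$. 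Before doing so I would record two facts: first, $(K,\phi)$ has discrete spectrum, hence is mean ergodic, so by \cref{fibersminimal} and \cref{thm:mergchar} each fiber $(K_l,\phi_l)$ is minimal and uniquely ergodic; let $\mathrm{m}_l$ denote its unique invariant probability measure, which has full support by minimality. Second, $\mathrm{C}(K)=\Kro(T_\phi)$, so the linear span of the eigenfunctions of $T_\phi$ is dense in $\mathrm{C}(K)$.

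The crux is the identity
\begin{align*}
  L_\lambda = \bigcup_{0\neq g\in\ker(\lambda\mathrm{I}-T_\phi)} \{\, l\in L \mid g|_{K_l}\neq 0 \,\},
\end{align*}
which says that an eigenvalue occurring on some fiber is already witnessed by the restriction of a global eigenfunction. The inclusion ``$\supseteq$'' is immediate: restricting $g\circ\phi=\lambda g$ to a fiber on which $g$ does not vanish identically yields a nonzero eigenfunction of $T_{\phi_l}$ for $\lambda$. For ``$\subseteq$'', given $l\in L_\lambda$ I would fix a nonzero $\chi\in\mathrm{C}(K_l)$ with $T_{\phi_l}\chi=\lambda\chi$ and use that the restriction map $R\colon\mathrm{C}(K)\to\mathrm{C}(K_l)$ is surjective (Tietze) and intertwines $T_\phi$ with $T_{\phi_l}$; hence $R$ maps $\ker(\mu\mathrm{I}-T_\phi)$ into $\ker(\mu\mathrm{I}-T_{\phi_l})$ and therefore sends the dense span of eigenfunctions of $T_\phi$ onto a dense subspace of $\mathrm{C}(K_l)$ contained in $\lin\bigcup_{|\mu|=1}\ker(\mu\mathrm{I}-T_{\phi_l})$. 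If no global $\lambda$-eigenfunction survived the restriction, this dense subspace would avoid $\ker(\lambda\mathrm{I}-T_{\phi_l})$ entirely; but $T_{\phi_l}$ is an isometry on $\mrL^2(K_l,\mathrm{m}_l)$, so its eigenfunctions for distinct eigenvalues are orthogonal, whence the nonzero continuous functional $f\mapsto\int_{K_l}f\,\overline{\chi}\,\mathrm{d}\mathrm{m}_l$ would annihilate that dense subspace — a contradiction, since its value at $\chi$ is $\|\chi\|_{\mrL^2(\mathrm{m}_l)}^2>0$. I expect this extraction of a suitable global eigenfunction from a purely local eigenvalue to be the main obstacle; the rest is routine.

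It then remains to observe that each set in the union is open, which is elementary: for $g\in\ker(\lambda\mathrm{I}-T_\phi)$ the function $|g|$ is continuous and, since $|\lambda|=1$, $\phi$-invariant, hence $|g|=h\circ q$ for some $h\in\mathrm{C}(L)$ by definition of the maximal trivial factor. Thus $g|_{K_l}$ has constant modulus $h(l)$, so $\{\,l\in L\mid g|_{K_l}\neq 0\,\}=\{\,h\neq 0\,\}$ is open, and $L_\lambda$ is a union of open sets, hence open. An alternative route would first reduce, via the pullback $q^*K$ and \cref{bundlechar}, to a compact group rotation bundle, identify its point spectrum bundle with a dual bundle as in \cref{rem:psb}, and then extract lower-semicontinuity from the behaviour of dual bundles in \cref{dualprops}; but the direct argument above appears shorter and avoids the reduction step.
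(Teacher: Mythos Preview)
Your argument is correct and follows the same overall scheme as the paper: reduce to showing that every $L_\lambda$ is open by proving that each local eigenvalue is witnessed by a \emph{global} $\lambda$-eigenfunction $g$ with $g|_{K_l}\neq 0$, and then use that $|g|$ factors through $q$. The difference lies in how the global eigenfunction is produced. The paper does this constructively: it extends a given local eigenfunction $f\in\mathrm{C}(K_l)$ to $\tilde f\in\mathrm{C}(K)$ by Tietze and then applies the mean ergodic projection of $\overline{\lambda}T_\phi$ (which exists because $T_\phi$ has discrete spectrum); since $A_n[\overline{\lambda}T_{\phi_l}]f=f$ for all $n$, the projection restricts to $f$ on $K_l$ and hence does not vanish there. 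Your route is instead existential: you use density of the span of eigenfunctions together with an $L^2$-orthogonality argument on the uniquely ergodic fiber to rule out the possibility that every global $\lambda$-eigenfunction restricts to zero. The paper's approach is a line shorter and avoids invoking the invariant measure $\mathrm{m}_l$; yours makes the obstruction transparent and shows clearly why discrete spectrum (density of eigenfunctions) is the right hypothesis.
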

\begin{proof}
  Suppose $(\lambda, l)\in\Sigma_{\mathrm{p}}(T_\phi)$ and let $f\in\mathrm{C}(K_l)$
  be a corresponding eigenfunction. Since $T_\phi$ has discrete
  spectrum, $\overline{\lambda}T_\phi$ is mean ergodic and since 
  all the eigenvalues of $T_{\phi_l}$ are simple, $\dim\fix(\overline{\lambda}T_\phi) = 1$.
  So as in the proof of \cref{thm:mergchar}, $f$ can be extended
  to a global fixed function
  $\tilde{f}\in\mathrm{C}(K)$ of $\overline{\lambda}T_\phi$. In particular,
  there is an open set $U\subset L$ such that for each 
  $l\in U$, $f_l\neq 0$ and $T_{\phi_l}(f_l) = \lambda f_l$.
\end{proof}

\begin{proposition}\label{pduality}
  Let $(\mathcal{G}, L, q; \alpha)$ be a compact group rotation bundle with 
  discrete spectrum and minimal fibers. Then it is isomorphic to 
  its  bi-dual bundle.
\end{proposition}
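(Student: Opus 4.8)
The plan is to realise the fiberwise Pontryagin evaluation as the required isomorphism, just as in the minimal case recalled at the beginning of this section. Write $(\mathcal{G}^{**}, L, \pi_L; \alpha_\iota)$ for the bi-dual bundle produced by applying \cref{dualbundles} twice. Because the bundle has minimal fibers, each fiber $\mathcal{G}_l$ is compact and, being equal to $\overline{\{\alpha(l)^n \mid n \geq 0\}}$, is also \emph{abelian}; hence $\mathcal{G}$ is a compact abelian group bundle, so that \cref{dualprops} applies to it. Tracing through the construction of the subtrivialization $\iota$ of $\mathcal{G}^*$ in \cref{dualbundles}, one obtains $\iota_l(\eta) = \eta(\alpha(l))$ for $\eta \in \mathcal{G}_l^*$, so that $\alpha_\iota(l) = \mathrm{ev}_{\alpha(l)}$, where for $g \in \mathcal{G}_l$ we write $\mathrm{ev}_g \in \mathcal{G}_l^{**}$ for the evaluation character $\eta \mapsto \eta(g)$. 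The candidate isomorphism is then the pair $(\Theta, \id_L)$ with $\Theta \colon \mathcal{G} \to \mathcal{G}^{**}$, $g \mapsto \mathrm{ev}_g$. By construction $\Theta$ preserves the fibers over $\id_L$, restricts on the fiber over $l$ to the classical Pontryagin evaluation $\mathcal{G}_l \to \mathcal{G}_l^{**}$, which is a topological group isomorphism since $\mathcal{G}_l$ is compact abelian, and intertwines the dynamics, because on the fiber over $l$ one has $\Theta(\phi_{\alpha(l)}(g)) = \mathrm{ev}_{\alpha(l)g} = \mathrm{ev}_{\alpha(l)}\,\mathrm{ev}_g = \phi_{\alpha_\iota(l)}(\Theta(g))$. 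It therefore remains only to show that $\Theta$ is a homeomorphism.

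The heart of the matter is continuity of $\Theta$. First I would observe that in the present situation $\mathcal{G}^*$ is itself a locally compact Hausdorff abelian group bundle with discrete fibers. Indeed, by \cref{lem:lsc} the point spectrum bundle $\Sigma_{\mathrm{p}}(\mathcal{G}, \phi_\alpha)$ is lower-semicontinuous, and since $\T$ carries the discrete topology this forces $\Sigma_{\mathrm{p}}(\mathcal{G}, \phi_\alpha)$ to be open in $\T \times L$; moreover $\mathcal{G}^*$ is, via its subtrivialization, homeomorphic to $\Sigma_{\mathrm{p}}(\mathcal{G}, \phi_\alpha)$, because $\iota_l(\mathcal{G}_l^*) = \{\eta(\alpha(l)) \mid \eta \in \mathcal{G}_l^*\} = \sigma_{\mathrm{p}}(T_{\phi_{\alpha(l)}})$ by minimality of the fiber (see the discussion at the start of this section and \cref{rem:psb}). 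Hence \cref{dualprops} is available with $\mathcal{G}^*$ in place of $\mathcal{G}$. Now, to prove that $g_i \to g$ in $\mathcal{G}$ implies $\Theta(g_i) \to \Theta(g)$, it suffices by the net characterization in \cref{dualprops}\ref{item:first} (applied to the bundle $\mathcal{G}^*$) to check that $\pi_L(\Theta(g_i)) = q(g_i) \to q(g) = \pi_L(\Theta(g))$ and that $\Theta(g_i)(\chi_i) = \chi_i(g_i) \to \chi(g) = \Theta(g)(\chi)$ for every convergent net $\chi_i \to \chi$ in $\mathcal{G}^*$ with $\pi_L(\chi_i) = q(g_i)$. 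The first assertion is immediate; the second is exactly the continuity of the evaluation pairing $\mathrm{ev} \colon \mathcal{G}^* \oplus \mathcal{G} \to \C$ furnished by \cref{dualprops}\ref{item:first} for the bundle $\mathcal{G}$, applied to the net $(\chi_i, g_i) \to (\chi, g)$ in $\mathcal{G}^* \oplus \mathcal{G}$.

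Finally, $\Theta$ is a bijection, being fiber-preserving over $\id_L$ and, by Pontryagin duality, bijective on each fiber. Its domain $\mathcal{G}$ is compact, whereas its codomain $\mathcal{G}^{**} = (\mathcal{G}^*)^*$ is Hausdorff by \cref{dualprops}\ref{item:bundlehausdorff}, $\mathcal{G}^*$ having just been seen to be lower-semicontinuous. A continuous bijection from a compact space onto a Hausdorff space is a homeomorphism, so $(\Theta, \id_L)$ is an isomorphism of group rotation bundles from $(\mathcal{G}, L, q; \alpha)$ onto its bi-dual, which in particular is again a compact group rotation bundle. I expect the step requiring the most care to be the verification that $\mathcal{G}^*$ is locally compact and Hausdorff in this situation, since that is precisely what makes the abstract duality results of \cref{dualprops} applicable to $\mathcal{G}^*$, together with the bookkeeping identifying the rotation section $\alpha_\iota$ of the bi-dual with $\Theta \circ \alpha$.
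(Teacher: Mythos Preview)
Your argument is correct and follows the same overall strategy as the paper: define the fiberwise Pontryagin evaluation $g\mapsto \delta_g$, show it is a continuous bijection, and conclude it is a homeomorphism because the domain is compact and the target $\mathcal{G}^{**}$ is Hausdorff (via \cref{lem:lsc} and \cref{dualprops}\ref{item:bundlehausdorff}). The only real difference lies in how continuity of $\Theta$ is obtained. The paper factors through the trivial bundle $\mathrm{E}(\mathcal{G},\phi_\alpha)\times L$: the classical Pontryagin map $(\psi,l)\mapsto(\delta_\psi,l)$ is a homeomorphism, $\rho^{**}$ is continuous by \cref{dualprops}\ref{item:dualcont}, and since $\rho$ is a quotient map (a continuous surjection between compact Hausdorff spaces), the universal property of the final topology forces $g\mapsto\delta_g$ to be continuous. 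You instead apply the net criterion of \cref{dualprops}\ref{item:first} twice---once to characterise convergence in $\mathcal{G}^{**}=(\mathcal{G}^*)^*$, once to invoke continuity of the pairing $\mathcal{G}^*\oplus\mathcal{G}\to\C$. Your route is more self-contained (it does not appeal to the classical duality theorem for the single group $\mathrm{E}(\mathcal{G},\phi_\alpha)$), whereas the paper's diagrammatic approach makes the relation to the Ellis group and the subtrivialization of $\mathcal{G}^*$ explicit, which is convenient for the applications in \cref{mthm} and \cref{mainresult}.
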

\begin{proof}
  The following diagram commutes:
  \begin{align*}
    \xymatrixcolsep{2cm}
    \xymatrixrowsep{1.5cm}
    \xymatrix{
      \left(\mathrm{E}(\mathcal{G}, \phi_\alpha)\times L, \phi_\alpha\right) \ar[d]^-\rho \ar[r]^-{(\psi, l)\mapsto 
(\delta_\psi, l)} & \left(\mathrm{E}(G, 
\phi_\alpha)^{**}\times L, \delta_{\phi_\alpha} \right) \ar[d]^-{\rho^{**}} \\
      \left(\mathcal{G}, \alpha\right) \ar[r]^-{g\mapsto \delta_g} & \left(\mathcal{G}^{**}, \delta_\alpha\right)
    }
  \end{align*}
  Since $\rho$ is a surjective, continuous map between compact spaces,
  $\mathcal{G}$ carries the final topology with respect to $\rho$. 
  This shows that the map $g\mapsto \delta_g$ is continuous and bijective.
  Combining \cref{rem:psb} and \cref{lem:lsc}, we see that 
  $(\mathcal{G}, L, q)^*$ is lower-semicontinuous and \cref{dualprops}
  shows that $\mathcal{G}^*$ embeds into $\mathrm{E}(\mathcal{G}, \phi_\alpha)\times L$
  and is therefore locally compact. By \cref{dualprops},
  $\mathcal{G}^{**}$ is thus Hausdorff. Since the map 
  $g\mapsto \delta_g$ is bijective, this implies
  $\mathcal{G}\cong \mathcal{G}^{**}$ and the claim follows.
\end{proof}

Here is now our final answer to the three aspects of the isomorphism problem
presented in the introduction.

\begin{theorem}\label{mthm}
  Let $(K, \phi)$ and $(M, \psi)$ be topological dynamical systems with 
  discrete spectrum and continuous sections of the canonical projections
  onto their respective maximal trivial factor.
  \begin{enumerate}[(a)]
    \item {\normalfont{(Representation)}} 
    The system $(K, \phi)$ is isomorphic to a compact group rotation bundle with minimal fibers.
    \item {\normalfont{(Uniqueness)}} The systems $(K, \phi)$ and $(M, \psi)$ are isomorphic if 
    and only if their point spectrum bundles are.
    \item {\normalfont{(Realization)}} The point spectrum bundle of $(K, \phi)$ is lower-semi\-con\-tin\-uous and 
  if $L$ is any compact space, 
  every lower-semicontinuous sub-group bundle of $(\T\times L, L, \pi_L)$ can be realized
  as the point spectrum bundle of a 
  topological dynamical system with discrete spectrum in the sense that the corresponding 
  canonical subtrivializations are isomorphic.
  \end{enumerate}
\end{theorem}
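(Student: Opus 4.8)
The plan is to reduce each of the three assertions to compact group rotation bundles with minimal fibers and discrete spectrum, and then invoke the bundle version of Pontryagin duality developed in \cref{pduality} and \cref{rem:psb}. \emph{Representation:} since $q\colon K\to L$ admits a continuous section and $(K,\phi)$ has discrete spectrum, \cref{bundlechar} gives that $(K,L,q;\phi)$ is isomorphic to its Ellis group bundle; by \cref{fibersminimal} every fiber $(K_l,\phi_l)$ is minimal, so $q$ is open and the Ellis semigroup bundle is compact by \cref{lem:open} (note $(K,\phi)$ is equicontinuous by \cref{prop:topdiscrspec}). Hence $(K,\phi)$ is isomorphic to a compact group rotation bundle with minimal fibers.

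\emph{Uniqueness.} For the forward implication, an isomorphism $\Theta\colon(K,\phi)\to(M,\psi)$ induces a $\mathrm{C}^*$-isomorphism $T_\Theta\colon\mathrm{C}(M)\to\mathrm{C}(K)$ intertwining the Koopman operators, which maps $\fix(T_\psi)$ onto $\fix(T_\phi)$; by \cref{prop:factorbundle} this means $\Theta$ descends to a homeomorphism $\eta\colon L_\phi\to L_\psi$ of the maximal trivial factors and is a bundle morphism over it, and the induced homeomorphisms $K_l\to M_{\eta(l)}$ conjugate the fiber Koopman operators, so $\sigma_{\mathrm p}(T_{\phi_l})=\sigma_{\mathrm p}(T_{\psi_{\eta(l)}})$ for every $l$. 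Thus $(z,l)\mapsto(z,\eta(l))$ is an isomorphism of the point spectrum bundles. For the converse, by the representation statement we may assume that $(K,\phi)$ and $(M,\psi)$ are compact group rotation bundles with minimal fibers and discrete spectrum, and their point spectrum bundles are then still isomorphic. For any such bundle $\mathcal{H}$, the fiberwise identifications $\mathcal{H}_l^*\cong\sigma_{\mathrm p}(T_{\phi_{\alpha(l)}})$ from the introduction of this section, together with \cref{rem:psb} and \cref{pduality}, yield $\mathcal{H}\cong\mathcal{H}^{**}\cong\bigl(\Sigma_{\mathrm p}(\mathcal{H})\bigr)^*$ as group rotation bundles. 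Since the dual construction is functorial on morphisms with bijective base map by \cref{dualprops}, an isomorphism of the point spectrum bundles dualizes to an isomorphism between $(K,\phi)$ and $(M,\psi)$.

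\emph{Realization.} Lower-semicontinuity of $\Sigma_{\mathrm p}(K,\phi)$ is \cref{lem:lsc}. Now let $(\mathcal{G},L,\pi_L)$ be a lower-semicontinuous sub-group bundle of $(\T\times L,L,\pi_L)$, write $\iota$ for the inclusion and consider the dual bundle $(\mathcal{G}^*,L,\pi_L;\alpha_\iota)$ of \cref{dualbundles}. Let $\Gamma\leq\T$ be the subgroup generated by $\bigcup_{l\in L}\mathcal{G}_l$ and let $a_\Gamma\in\widehat{\Gamma}$ be the inclusion character $\Gamma\hookrightarrow\T$. The restriction map $\Phi\colon\widehat{\Gamma}\times L\to\mathcal{G}^*$, $(\gamma,l)\mapsto\gamma|_{\mathcal{G}_l}$, is surjective (characters of subgroups of a discrete abelian group extend), continuous (by the net characterization of the topology on $\mathcal{G}^*$ in \cref{dualprops}\ref{item:first}) and satisfies $\Phi\circ(\phi_{a_\Gamma}\times\id_L)=\phi_{\alpha_\iota}\circ\Phi$; hence it is a factor map from the trivial group rotation bundle $(\widehat{\Gamma}\times L,\phi_{a_\Gamma}\times\id_L)$, which has discrete spectrum, onto $(\mathcal{G}^*,\phi_{\alpha_\iota})$. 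Consequently $\mathcal{G}^*$ is compact (a continuous image of a compact space), Hausdorff (by \cref{dualprops}\ref{item:bundlehausdorff}, since $\mathcal{G}$ is lower-semicontinuous) and has discrete spectrum, because relative compactness of orbits and positivity of $\inf_n\|T^n x\|$ pass to closed invariant subspaces, so that factors of systems with discrete spectrum have discrete spectrum by \cref{thm:operatordiscrspec}. Its fibers, identified with $\widehat{\mathcal{G}_l}$ carrying the rotation by $\iota_l$, are minimal because $\iota_l$ is injective, so by \cref{mergbasespace} its maximal trivial factor is $L$; and the $l$-fiber of its point spectrum bundle equals $\sigma_{\mathrm p}(T_{\phi_{\iota_l}})=\{\,\iota_l(z):z\in\mathcal{G}_l\,\}=\mathcal{G}_l$ under the Pontryagin identification $\widehat{\widehat{\mathcal{G}_l}}\cong\mathcal{G}_l$, with the subspace topology of $\C\times L$. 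Therefore $\Sigma_{\mathrm p}(\mathcal{G}^*,\phi_{\alpha_\iota})$ coincides with $\mathcal{G}$ and the canonical subtrivializations agree, as claimed.

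\emph{Main obstacle.} The real work lies in the realization statement: one must verify that the dual bundle $\mathcal{G}^*$ of a merely lower-semicontinuous sub-group bundle is again a compact topological dynamical system with discrete spectrum. Both points follow once $\mathcal{G}^*$ is exhibited, via fiberwise restriction of characters, as a factor of the trivial group rotation bundle over $\widehat{\Gamma}$. The uniqueness part is, by contrast, a matter of bookkeeping with \cref{pduality}, \cref{rem:psb} and the functoriality of the dual construction.
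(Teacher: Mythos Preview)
Your proof is correct and follows the same overall strategy as the paper: representation via \cref{bundlechar}, uniqueness by reducing to group rotation bundles and invoking \cref{pduality} together with \cref{rem:psb}, and realization via the dual bundle construction combined with \cref{dualprops}\ref{item:bundlehausdorff}. Your realization argument is more explicit than the paper's terse ``analogously to the minimal case''---you construct the enveloping group $\Gamma$ and exhibit $\mathcal{G}^*$ as a factor of the trivial bundle $(\widehat{\Gamma}\times L,\phi_{a_\Gamma}\times\id_L)$ to obtain compactness and discrete spectrum directly---but this is precisely the natural way to unpack that phrase, not a genuinely different route.
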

\begin{proof}
  The representation result is \cref{bundlechar}.
  Moreover, \cref{rem:psb} and \cref{pduality} show that the point spectrum bundle is a 
  complete isomorphism 
  invariant for compact group rotation bundles with minimal fibers and discrete spectrum
  and the representation theorem allows to extend this to $(K, \phi)$ and 
  $(M, \psi)$. The last part follows, analogously to the minimal case, from 
\cref{dualprops}\ref{item:bundlehausdorff}, 
  \cref{pduality} and \cref{rem:psb}.
\end{proof}

\begin{remark}
  Note that the statement of \cref{mthm} is false if 
  the continuous section assumption is removed. Indeed, one obtains a counterexample
  from \cref{nosection}.
\end{remark}

In order to obtain a similar result for measure-preserving 
systems, we first need to define their point spectrum bundles.
To motivate the definition, note that one could use the ergodic
decomposition to do this for separable systems. However, 
to treat non-separable systems, we base
our definition on topological models.

\begin{definition}\label{def:measpsb}
  Let $(\mathrm{X}, \phi)$ be a measure-preserving dynamical 
  system and take $(K, \mu_K; \psi)$ to be a topological model 
  corresponding to the algebra
  \begin{align*}
   \mathcal{A} \defeq \Kro_{\mathrm{L}^\infty}(T_\phi) =  
\operatorname{cl}_{\mathrm{L}^\infty}\bigcup_{|\lambda| = 1} 
    \ker_{\mathrm{L}^\infty}(\lambda I - T_\phi) \subset \mrL^\infty(\X).
  \end{align*}
  Let 
  $(\Sigma_{\mathrm{p}}(\X, \phi), L, p)$ be the point spectrum bundle of 
  $(K, \psi)$ and set $\nu\defeq p_*\mu_K$. We then call $(\Sigma(\X, \phi), L, p, \nu)$ the 
  \emph{point spectrum bundle} of $(\mathrm{X}, \phi)$. We say that the point 
  spectrum bundles of two systems $(\mathrm{X}, \phi)$ and $(\mathrm{Y}, \psi)$
  are \emph{isomorphic} if there is an isomorphism $(\Theta, \theta)\colon
  (\Sigma_{\mathrm{p}}(\X, \phi), L, p) \to (\Sigma_{\mathrm{p}}(\Y, \psi), L', p')$
  such that $\theta$ is measure-preserving.
\end{definition}


\begin{remark}\label{consistency}
Let $(K, \phi)$ be a topological dynamical system, 
$\mu$ a regular Borel measure on $K$, $q\colon K\to L$
the canonical projection onto the maximal trivial factor of $(K, \phi)$ and $\nu\defeq q_*\mu$. 
If the canonical map
$j\colon\Kro_{\mathrm{C}(K)}(T_\phi) \to \Kro_{\mathrm{L}^\infty(K, \mu)}(T_\phi)$
is an isomorphism, then $\Sigma_{\mathrm{p}}(K, \phi) 
= \Sigma_{\mathrm{p}}(K, \mu; \phi)$. This is in particular
the case for the group rotation bundles constructed in \cref{measurablebundle}.
\end{remark}

Recall that a regular Borel measure $\mu$ on a (hyper)stonean space $K$ is called
\emph{normal} if all rare sets are null-sets. If $\mu$ is a normal measure
on $K$ with full support, then the canonical embedding 
$\mathrm{C}(K) \hookrightarrow \mathrm{L}^\infty(K, \mu)$ is an isomorphism, cf. 
\cite[Corollary III.1.16]{Takesaki1979}. After this reminder, we can state the 
analogue of \cref{mthm} for measure-preserving systems.

\begin{theorem}\label{mainresult}
  Let $(\mathrm{X}, \phi)$ and $(\mathrm{Y}, \psi)$ be measure-preserving 
  dynamical systems with 
  discrete spectrum.
  \begin{enumerate}[(a)]
    \item \label{item:mr1} {\normalfont{(Representation)}} The system 
    $(\mathrm{X}, \phi)$ is Markov-isomorphic to a rotation 
    $(\mathcal{G}, \mu_{\mathcal{G}}; \phi_\alpha)$ on a compact 
    group rotation bundle with minimal fibers.
    \item \label{item:mr2} {\normalfont{(Uniqueness)}} The systems $(\mathrm{X}, \phi)$ and 
    $(\mathrm{Y}, \psi)$ are Markov-isomorphic 
    if and only if their point spectrum bundles are isomorphic.
    \item {\normalfont{(Realization)}} 
    The point spectrum bundle 
    of $(\mathrm{X}, \phi)$ is continuous. Conversely, if 
    $(L, \nu)$ is a hyperstonean compact 
    probability space such that $\nu$ is normal and $\supp \nu = L$ and $(\Sigma, L, p)$ is a 
    continuous sub-group bundle 
    of $(\T\times L, L, p)$ then $(\Sigma, L, p; \nu)$ can be realized as the point 
    spectrum bundle of a measure-preserving dynamical system with discrete spectrum.
  \end{enumerate}
\end{theorem}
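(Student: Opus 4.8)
The plan is to deduce each of the three parts of \cref{mainresult} from its topological counterpart \cref{mthm} by passing to a suitable topological model, exactly mirroring the strategy already used for \cref{measurablebundle}. The key observation is that the topological model $(K,\mu_K;\psi)$ for the algebra $\mathcal{A} = \Kro_{\mathrm{L}^\infty}(T_\phi)$ constructed in the proof of \cref{measurablebundle} has two crucial features: its maximal trivial factor $L_\psi$ is extremally disconnected (so \cref{gleason} applies and the canonical section exists), and, because the construction realizes $(\mathcal{G},\mu_{\mathcal G};\phi_\alpha)$ with $j$ an isomorphism of Kronecker spaces, \cref{consistency} gives $\Sigma_{\mathrm{p}}(\X,\phi) = \Sigma_{\mathrm{p}}(K,\mu_K;\psi) = \Sigma_{\mathrm{p}}(K,\psi)$. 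So the measure-preserving point spectrum bundle really is, up to the base measure, the topological one.

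For \ref{item:mr1} there is nothing new to do: it is precisely \cref{measurablebundle}. For \ref{item:mr2}, the forward direction is easy — a Markov isomorphism $(\X,\phi)\to(\Y,\psi)$ restricts to a $\mathrm{C}^*$-isomorphism of the respective Kronecker algebras intertwining the Koopman operators, hence induces a homeomorphism of the Gelfand spectra of the fixed algebras (the base spaces $L$, $L'$) which is measure-preserving for the pushforward measures $\nu$, $\nu'$, and an isomorphism of the topological point spectrum bundles via \cref{mthm}(b); combined with \cref{consistency} this is an isomorphism of the measure-preserving point spectrum bundles in the sense of \cref{def:measpsb}. Conversely, given an isomorphism $(\Theta,\theta)$ of the point spectrum bundles with $\theta$ measure-preserving, one reconstructs the group rotation bundles via \cref{rem:psb}, obtains an isomorphism of the corresponding compact group rotation bundles $(\mathcal{G},\phi_\alpha)\cong(\mathcal{G}',\phi_{\alpha'})$ from \cref{mthm}, and transports the measures; since $\theta$ is measure-preserving and the disintegrations $\mu_{\mathcal G} = \int_L \mathrm{m}_l\,\mathrm{d}\nu$ from \cref{disintegration} are determined fiberwise by the Haar measures, the resulting map is measure-preserving, hence a Markov isomorphism between the models, and therefore between $(\X,\phi)$ and $(\Y,\psi)$ by \cref{measurablebundle}.

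For the realization part, continuity (both upper- and lower-semicontinuity) of $\Sigma_{\mathrm{p}}(\X,\phi)$: lower-semicontinuity is \cref{lem:lsc} applied to the model $(K,\psi)$, while upper-semicontinuity comes from the fact that the base $L$ is hyperstonean and $\nu$ normal with full support, so $\mathrm{C}(L)\hookrightarrow\mathrm{L}^\infty(L,\nu)$ is an isomorphism and by \cref{dualprops}\ref{item:bundlehausdorff} applied to the dual bundle $\mathcal{G}^*$ one gets Hausdorffness of the total space, which together with local compactness of $\mathcal{G}^*\cong\Sigma_{\mathrm{p}}^*$ forces the original group bundle $\mathcal{G}$ to be upper-semicontinuous (a locally compact Hausdorff group bundle with compact fibers over a compact base is continuous). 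Conversely, given a continuous sub-group bundle $(\Sigma,L,p)$ of $(\T\times L,L,p)$ with $(L,\nu)$ hyperstonean, normal, full support: form its dual bundle $(\Sigma^*,L,\pi_L;\alpha_\iota)$ as in \cref{dualbundles}, which is a compact group rotation bundle with minimal fibers and discrete spectrum and whose base has a continuous section by \cref{gleason}; equip it with the invariant measure $\mu_{\Sigma^*} = \int_L\mathrm{m}_l\,\mathrm{d}\nu$ from \cref{disintegration}; then $\mathrm{C}(L)\hookrightarrow\mathrm{L}^\infty(L,\nu)$ being an isomorphism forces $j$ to be an isomorphism of Kronecker spaces, so \cref{consistency} gives that the point spectrum bundle of this measure-preserving system is $(\Sigma,L,p;\nu)$ as required. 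The main obstacle, I expect, is the upper-semicontinuity claim in the realization direction: one must argue carefully that normality of $\nu$ together with the isomorphism $\mathrm{C}(L)\cong\mathrm{L}^\infty(L,\nu)$ genuinely rules out the "jumping fiber" pathology of \cref{dualprops}\ref{item:bundlehausdorff}, i.e., that the subgroup $H$ appearing there cannot fail to be closed on a set of positive normal measure — this is where the hyperstonean hypothesis is really used, and the bookkeeping relating the model's base space to $L$ needs to be done with some care.
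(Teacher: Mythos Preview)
Your treatment of parts \ref{item:mr1} and \ref{item:mr2} matches the paper's approach closely; the paper phrases the converse of (b) via \cref{pduality} rather than \cref{mthm} directly, but the content is the same, including the use of the disintegration from \cref{disintegration} to verify that the induced isomorphism is measure-preserving.

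The genuine gap is in part (c), specifically upper-semicontinuity of $\Sigma_{\mathrm{p}}(\X,\phi)$. Your argument via \cref{dualprops}\ref{item:bundlehausdorff} is confused about which bundle is at stake: what must be shown upper-semicontinuous is $\Sigma_{\mathrm{p}} \cong \mathcal{G}^*$, whose fibers are discrete, not $\mathcal{G}$ itself (whose upper-semicontinuity is automatic by compactness). \cref{dualprops}\ref{item:bundlehausdorff} only links \emph{lower}-semicontinuity of a bundle to Hausdorffness of its dual; there is no analogous duality principle producing upper-semicontinuity, and your parenthetical claim about ``locally compact Hausdorff group bundles with compact fibers'' does not apply to $\mathcal{G}^*$. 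The paper's argument is instead concrete and does not go through duality: fix $\lambda\in\T$, set $U = \{l\in L : (\lambda,l)\in\Sigma_{\mathrm{p}}\}$ (open by \cref{lem:lsc}), build a measurable eigenfunction $f$ supported on $p^{-1}(U)$ from the fiberwise characters, use the isomorphism of Kronecker spaces from \cref{measurablebundle} to find a continuous representative $g$, and then observe that $|g|\in\fix_{\mathrm{C}(\mathcal{G})}(T_{\phi_\alpha})$ corresponds via $T_p^{-1}$ to a continuous function on $L$ which equals $\1_U$ $\nu$-almost everywhere. Since $\mathrm{C}(L)\cong\mathrm{L}^\infty(L,\nu)$, the unique continuous representative of $[\1_U]$ is $\1_{\overline{U}}$, so $g_l\neq 0$ for all $l\in\overline{U}$, forcing $\overline{U}\subset U$. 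This is how the hyperstonean hypothesis enters, and your measure-theoretic sketch involving the subgroup $H$ from \cref{dualprops}\ref{item:bundlehausdorff} does not reach it.

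A smaller gap: in the converse realization you assert that $\mathrm{C}(L)\cong\mathrm{L}^\infty(L,\nu)$ ``forces $j$ to be an isomorphism of Kronecker spaces''. This is the right conclusion but it is not automatic; the paper proves it by the same eigenfunction-lifting argument as above, showing directly that every $[f]\in\mathrm{L}^\infty(\mathcal{G},\mu_{\mathcal{G}})$ with $T_\phi[f]=\lambda[f]$ has a continuous representative built from a continuous section of characters $l\mapsto\eta(l)\in\mathcal{G}_l^*$ and a continuous coefficient function $c\in\mathrm{C}(L)$ obtained from $[c]\in\mathrm{L}^\infty(L,\nu)$ via $\mathrm{C}(L)\cong\mathrm{L}^\infty(L,\nu)$.
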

\begin{proof}
  The representation result was proved in \cref{measurablebundle}. Using it, 
  the uniqueness can be reduced to the case of the special group rotation bundles  
  from \cref{measurablebundle} and for these, it follows from \cref{consistency}.
  Indeed, let $(\mathcal{G}, \mu_{\mathcal{G}};\phi_\alpha)$ and 
  $(\mathcal{H}, \mu_{\mathcal{H}};\phi_\beta)$ be two such rotations and 
  \begin{align*}
    (\Theta, \theta)\colon 
    \big(\Sigma_\mathrm{p}\left(\mathcal{G}, \mu_{\mathcal{G}};\phi_\alpha\right), 
    L, p, \nu \big)
    \to \big(\Sigma_{\mathrm{p}}\left(\mathcal{H}, \mu_{\mathcal{H}};\phi_\beta\right),
    M, q, \eta\big)
  \end{align*}
  an isomorphism of their point spectrum bundles.
  Then \cref{consistency} shows that $(\Theta, \theta)$ is, in particular, an 
  isomorphism of their topological point spectrum bundles and thus induces
  a (topological) isomorphism $(\Theta^*, \theta^{-1})$ of the corresponding dual bundles. 
  By \cref{pduality}, this yields an isomorphism 
  \begin{align*}
    (\Psi, \theta^{-1})\colon
    (\mathcal{H}, M, q; \phi_\beta) \to 
    (\mathcal{G}, L, p; \phi_\alpha).
  \end{align*}
  Using the disintegration formula from \cref{disintegration},
  one quickly checks that $\Psi_*\mu_{\mathcal{H}} = \mu_{\mathcal{G}}$ because 
  $\theta^{-1}_*\eta = \nu$.
  
    For part (c), let $(L, \nu)$ 
    be a hyperstonean compact 
    probability space such that $\nu$ is normal and $\supp \nu = L$ and let $(\Sigma, L, p)$ be a 
    continuous sub-group bundle 
    of $(\T\times L, L, p)$. Let $(\mathcal{G}, L, \pi_L, \phi_\alpha)$ be its dual
    group rotation bundle endowed with the measure
    \begin{align*}
      \mu_{\mathcal{G}} \defeq \int_L \dm_l \dnu.
    \end{align*}
    To prove that 
    $\Sigma = \Sigma_{\mathrm{p}}(\mathcal{G}, \mu_{\mathcal{G}}, \phi_\alpha)$, it
    suffices to show that each eigenfunction $f\in\mathrm{L}^\infty(\mathcal{G}, 
    \mu_{\mathcal{G}})$ has a representative $g\in\mathrm{C}(\mathcal{G})$ since 
    then 
    \begin{align*}
      \Sigma = \Sigma_{\mathrm{p}}(\mathcal{G}, \phi_\alpha) = \Sigma_{\mathrm{p}}(\mathcal{G}, 
    \mu_{\mathcal{G}}, \phi_\alpha)
    \end{align*}
    by \cref{pduality} and \cref{consistency}.
    So take 
    $[f]\in\mathrm{L}^\infty(\mathcal{G}, \mu_{\mathcal{G}})$
    with $T_\phi [f] = \lambda [f]$. Then $f_l = \lambda f_l$ for $\nu$-almost all 
    $l\in L$.
    Let $U_\lambda\subset L$ be the open subset of $l\in L$ such that 
    $(\lambda, l) \in\Sigma$ and note that 
    $U_\lambda$ is also closed since $(\Sigma, L, p)$ is upper-semicontinuous.
    
    Since $\mathcal{G}^*$ is 
    isomorphic to the point spectrum bundle $\Sigma_{\mathrm{p}}(\mathcal{G}, \phi_\alpha)$
    via an isomorphism $\Phi$
    by \cref{pduality}, the map $\eta\colon U\to \mathcal{G}^*$, $l \mapsto \Phi^{-1}(\lambda, l)$ 
is
    continuous. Extend $\eta$ to all of $L$ by setting $\eta(l)$ to the trivial character
    in $\mathcal{G}^*_l$ for $l\in L\setminus U$ and note that $\eta$ is continuous since 
    $U$ is open and closed.
    
    Now, for $l\in U$, each fiber $(\mathcal{G}_l, \phi_{\alpha, l})$ of $(\mathcal{G}, 
\phi_\alpha)$     is 
    a minimal group rotation and 
    so the eigenspace of the Koopman operator $T_{\phi_{\alpha, l}}$ corresponding to $\lambda$ 
    is at most one-dimensional 
    and therefore spanned by $\eta(l)\in\mathcal{G}_l^*$. So for $\nu$-almost every $l\in U$, there
    is a constant $c_l\in\C$ such that $f_l = c_l\eta(l)$ $\mathrm{m}_l$-almost everywhere.
    If we extend $c$ to $L$ by $0$, $[c]\in\mathrm{L}^\infty(L, \nu)$ since $[f]$ is in 
  $\mathrm{L}^\infty(\mathcal{G}, \mu)$. But 
    $\mathrm{C}(L) \cong \mathrm{L}^\infty(L, \nu)$ via the 
    canonical embedding and so we may assume that $c$ is continuous. If $q\colon\mathcal{G}\to 
L$ is the projection onto $L$, using \ref{item:first} of \cref{dualprops}, we see that the 
    function 
    $\tilde{f}\colon \mathcal{G}\to \C$, $x\mapsto c_{q(x)}\eta_{q(x)}(x)$ is in 
    $\mathrm{C}(\mathcal{G})$ and that $f = \tilde{f}$ $\mu$-almost everywhere by construction.
    
    Now let $(\X, \phi)$ be a measure-preserving dynamical system with discrete spectrum.
    In order to show that its point spectrum bundle is upper-semicontinuous, we may switch to its 
    representation $(\mathcal{G}, \mu_{\mathcal{G}}, \phi_\alpha)$ on a compact group rotation
    bundle $(\mathcal{G}, L, p;\phi_\alpha)$ constructed in \cref{measurablebundle}. Take 
    $\lambda\in \T$. By \cref{consistency}
    and \cref{lem:lsc}, the set 
    \begin{align*}
      U &\defeq \{ l\in L \mid (\lambda, l)\in \Sigma_{p}(\mathcal{G}, 
\mu_{\mathcal{G}}, \phi_\alpha)\} \\
       &= \{ l\in L \mid (\lambda, l)\in \Sigma_{p}(\mathcal{G},\phi_\alpha)\}
    \end{align*}
    is open. Via the isomorphism $\Theta\colon\Sigma_{p}(\mathcal{G},\phi_\alpha)\cong 
\mathcal{G}^*$,
    we see that the function $F\colon U \to \mathcal{G}^*$, $l \mapsto \Theta(\lambda, l)$
    selecting the (unique) character on $\mathcal{G}_l$ correponding to the eigenvalue $\lambda$
    is continuous. By \ref{item:first} of \cref{dualprops}, $F$ defines a continuous function 
    $f\colon p^{-1}(U) \to \C$ and we may extend $f$ to a measurable function on all of 
    $\mathcal{G}$ by 0. Then $T_\phi f = \lambda f$ and since the $\mathrm{C}(\mathcal{G})$- and 
    $\mathrm{L}^\infty(\mathcal{G},\mu_{\mathcal{G}})$-Kronecker space for $T_\phi$ are
    canonically isomorphic, 
    we can find a continuous representative $g\in\mathrm{C}(\mathcal{G})$ 
    for $f$. 
    
    Consider the following canonical isomorphisms:
    \begin{align*}
      &T_p\colon \mathrm{C}(L) \to 
\fix_{\mathrm{C}(\mathcal{G})}(T_{\phi_\alpha}) \hookrightarrow \mathrm{C}(\mathcal{G}), \\
      &T_p\colon \mathrm{L}^\infty(L, \nu) \to \fix_{\mathrm{L}^\infty}(T_{\phi_\alpha}) 
\hookrightarrow \mathrm{L}^\infty(\mathcal{G}, \mu_{\mathcal{G}}).
    \end{align*}
    Since $f$ is an eigenfunction
    $[|f|]\in\fix_{\mathrm{L}^\infty}(\mathcal{G},\mu_{\mathcal{G}})$
    and in fact, $|f| = \1_{p^{-1}(U)}$. Therefore, $T_p^{-1}([|f|]) = [\1_U]$.
    Moreover, $g$ is also an eigenfunction and so 
    $|g|\in\fix_{\mathrm{C}(\mathcal{G})}(T_{\phi_\alpha})$ and 
    $T_p^{-1}(|g|)\in \mathrm{C}(L)$. 
    
    But $|f| = |g|$ $\mu$-almost everyhwere and hence 
    $T_p^{-1}(|g|) = \1_U$ $\nu$-almost everywhere. But since 
    $\mathrm{C}(L) \cong \mathrm{L}^\infty(L, \nu)$ every equivalence class
    in $\mathrm{L}^\infty(L, \nu)$ contains precisely one continuous function,
    implying $T_p^{-1}(|g|) = \1_{\overline{U}}$.
    In particular, $g$ is an eigenfunction
    on $\mathcal{G}$ satisfying $g_l \neq 0$  for each $l\in\overline{U}$. Therefore, 
    $\overline{U} \subset U$ and hence $U = \overline{U}$. This shows that the 
    point spectrum bundle of $(\mathrm{X}, \phi)$ is upper-semicontinuous.
\end{proof}

\begin{remark}
  To conclude, let us briefly discuss how the different statements \ref{item:mr1} and 
  \ref{item:mr2} in \cref{mainresult} can be improved
  in the special case that $\X$ is a standard probability space:
  \begin{enumerate}[(a)]
    \item {\normalfont{(Representation)}} It is not difficult to see that if the measure space
    $\X$ is separable, the group rotation bundle can be chosen to be metrizable: Going back to
    the proof of \cref{measurablebundle}, the algebra $\mathcal{A}$ needs to be replaced by
    a separable subalgebra $\mathcal{B}$ which is still dense in $\mathrm{L}^1(\X)$. Using that $T_\phi$ 
    is mean ergodic on $\mathcal{A}$ and that there hence is a projection $P\colon \mathcal{A}\to 
\fix_{\mathcal{A}}(T_\phi)$, 
    this can be done in such a way that $\fix_{\mathcal{B}}(T_\phi)$ is generated
    by its characteristic functions. Therefore, its Gelfand representation space is totally 
    disconnected and using \cref{michael} instead of \cref{gleason}, one can continue the 
    proof of \cref{measurablebundle} analogously. By
    von Neumann's theorem \cite[Theorem 7.20]{EFHN2015} $(\mathrm{X}, \phi)$ is then not
    only Markov-isomorphic but point-isomorphic to the rotation on a compact group rotation bundle.
    \item {\normalfont{(Uniqueness)}} By von Neumann's theorem, Markov-isomorphy can be 
    replaced by point-isomorphy.
  \end{enumerate}
\end{remark}

\printbibliography

\end{document}